\newtheorem{conjecture}{Conjecture}[section]
\newtheorem{problem}{Problem}[section]
\newtheorem{theorem}{Theorem}[section]
\newtheorem*{proposition*}{Proposition}
\newtheorem{lemma}{Lemma}
\newtheorem{Remark}{Remark}[section]
\newtheorem{Principle}{Principle}[section]
\newcommand{\y}{Y}
\newcommand{\M}{{\mathscr{M}}}
\newcommand{\B}{{\mathscr{B}}}
\newcommand{\NN}{{\mathscr{N}}}
\newcommand{\N}{{\mathbb N}}
\newcommand{\lb}{\left(}
\newcommand{\rb}{\right)}
\def \i {{\rm i}}
\def \d {{\rm d}}
\newcommand{\J}{{\mathscr{U}}}
\newcommand{\R}{{\mathbb R}}
\newcommand{\Er}{{\mathscr{E}}}
\begin{document}
\title{ Omega theorems for logarithmic derivatives  of zeta and $L$-functions}
 \author[Daodao Yang]{Daodao Yang}

\address{Address: Universit\'e de Montr\'eal,
CP 6128, Succ. Centre-Ville
H3C 3J7, Montr\'eal, QC, Canada}

\email{yangdao2@126.com \quad dyang@msri.org}

\maketitle

\begin{abstract}
We establish several new   $\Omega$-theorems for logarithmic derivatives  of the Riemann zeta function and  Dirichlet $L$-functions. In particular,   this improves on earlier work   of Landau (1911), Bohr-Landau (1913), and recent work of  Lamzouri.

\end{abstract}

 \section{Introduction}

The  logarithmic derivative of the Riemann zeta function is an important object in the analytic number theory since it naturally appears in the proof of the prime number theorem. In 1911, Landau \cite{Lan1911} proved that there exists a positive constant $k_1$, such that the following inequality 
\[ \left| \frac{\zeta^{\prime}}{\zeta}(s)\right| \geqslant k_1 \log_2 t, 
\]
has a solution $s = \sigma + \i t$ in the region $\{\sigma + \i t: \sigma > 1, \quad  t > \tau\}$ for any given $\tau  > 0$. From Landau's $\Omega$-theorem, one can deduce that $ \zeta^{\prime}(1 + \i t)/\zeta(1 + \i t) = \Omega \lb \log_2 t \rb$ by  a simple application of   the Phragm\'en--Lindel\"of principle.     And in 1913, Bohr-Landau \cite{BL1913} proved that there exists  a positive constant $k_2$, such that for any given  $\theta$ with $0 < \theta < \frac{1}{2}$ and for any given   $\tau > 0$ the following inequality 
\[ \left| \frac{\zeta^{\prime}}{\zeta}(s)\right| \geqslant \lb \log t \rb^{k_2 \theta}, 
\]
has a solution $s = \sigma + \i t$ in the region $\{\sigma + \i t: \sigma \geqslant 1 - \theta, \quad  t \geqslant \tau\}$. However, both  $k_1$ and $k_2$ in their results are not effective constants. In this paper, we will show that one can take $k_1 = k_2 = 1$. 

After the work of Bohr and Landau, when assuming the  the Riemann hypothesis(RH), Littlewood \cite{Lit1925} proved that the following upper bound (also see \cite{MV}*{Theorem 13.13}),     

\[
\left| \frac{\zeta^{\prime}}{\zeta}(\sigma + \i t)\right| \leqslant \sum_{ n \leqslant (\log t)^2} \frac{\Lambda(n)}{n^{\sigma}}+ O\left( \lb\log t\rb^{2 - 2\sigma} \right),\]
holds uniformly for $~~\frac{1}{2} + \frac{1}{\log_2 t} \leqslant \sigma \leqslant \frac{3}{2},$ and $t \geqslant 10.$ In particular, the following holds on RH
 \begin{align*}
  \left| \frac{\zeta^{\prime}}{\zeta}\left(1 + \i t\right)\right|  \leqslant 2 \log_2 t + O(1),\, \quad \text{and} \quad \left| \frac{\zeta^{\prime}}{\zeta}(\sigma + \i t)\right| \ll_{\epsilon} \lb \log t\rb^{2 - 2\sigma}, \quad \forall \frac{1}{2} 
 + \epsilon \leqslant \sigma   \leqslant 1 -\epsilon,\quad \forall t \geqslant 10,
\end{align*}
for any fixed positive number $\epsilon < \frac{1}{10}.$

When assume the Generalized Riemann Hypothesis(GRH), one can establish similar upper bounds  for Dirichlet $L$-functions.   In the past years, there were many studies on upper bounds of logarithmic derivatives  of zeta and $L$-functions. For instance, see \cites{ IMS,  Tru, CG22,  EP, PS, C23, AS23}. But the orders of Littlewood's conditional upper bounds have never been improved. Only the big $O(1)$ term for the $1$-line and the implied constant for the $\sigma$-line when $\sigma \in (\frac{1}{2}, 1)$ have been improved.

 An important motivation for the research on  values of logarithmic derivatives of Dirichlet $L$-functions is that values of $ L^{\prime}(1, \chi)/L(1, \chi)$ are related to  the Euler-Kronecker constants  of the cyclotomic fields. Systematic study of the Euler-Kronecker constants was started by Ihara \cites{Iha1, Iha2}. Let $K$ be  a global field, and $\zeta_K(s)$  be the associated Dedekind zeta function. Then the  Euler-Kronecker constant $\gamma_K$ is defined by
\[\gamma_K:\, = \lim_{s\to 1} \lb\frac{\zeta^{\prime}_K(s)}{\zeta_K(s)} + \frac{1}{s-1}\rb.\]When $K$ is the cyclotomic field $\mathbb{Q}(\xi_p)$ generated by a   primitive $p$-th root of unity,   one may deduce
\[\gamma_K = \gamma + \sum_{\chi \neq \chi_0}\frac{L^{\prime}}{L} (1, \chi).\]

In 2009, assuming GRH,  Ihara-Kumar Murty-Mahoro Shimura \cite{IMS} established\footnote{This is Corollary 3.3.2 in  \cite{IMS} and the constant in the big $O(\cdot)$ term is stated explicitly.} that
\[ \left|\frac{L^{\prime}}{L} (1, \chi) \right| \leqslant 2 \log_2 q + 2(1-\log 2) + O\lb \frac{\log_2 q}{\log q} \rb,\]
for all nonprincipal primitive characters $\chi$(mod $q$).    In 2023, assuming GRH, Chirre-Val\r{a}s Hagen-Simoni\u{c} \cite{C23} made further improvement on the constant term, and obtained\footnote{Note that $- 0.4989 < 2(1-\log 2) = 0.613\cdots.$}
 \[ \left|\frac{L^{\prime}}{L} (1, \chi) \right|\leqslant 2 \log_2 q -0.4989 +  5.91 \frac{\lb \log_2 q \rb^2}{\log q} , \quad \forall q \geqslant 10^{30},\]
for all  primitive characters $\chi$(mod $q$) and they also  obtained similar result for the  the Riemann zeta function
 \[ \left|\frac{\zeta^{\prime}}{\zeta}(1 + \i t) \right| \leqslant 2 \log_2 t -0.4989 + 5.35 \frac{\lb \log_2 t \rb^2}{\log t} , \quad \forall t \geqslant 10^{30}, \]
when assume RH.

 Compared to research on upper bounds, there are  fewer results for  lower bounds of maximal values of logarithmic derivatives  of zeta and $L$-functions.   Let $D$ be a fundamental discriminant and let $\chi_D$ be the quadratic Dirichlet character of conductor $D$.    In \cite{MKM}, M. Mourtada-Kumar Murty proved that  there are infinitely many fundamental discriminants $D$ (both positive
and negative) such that $L^{\prime}(1, \chi_D)/L (1, \chi_D) \geqslant  \log_2 |D| +O(1)$. Furthermore, for $x$ large enough, they proved that there are $\geqslant x^{\frac{1}{10}}$ fundamental discriminants $ 0 <
D \leqslant x$ such that $-L^{\prime}(1, \chi_D)/L (1, \chi_D) \geqslant  \log_2 D +O(1)$. When assume GRH, M. Mourtada-Kumar Murty proved that for $x$ large enough, there are $\gg x^{\frac{1}{2}}$ primes $q \leqslant x$ such
that $L^{\prime}(1, \chi_q)/L (1, \chi_q) \geqslant  \log_2 x + \log_3 x + O(1)$ and $\gg x^{\frac{1}{2}}$ primes $q \leqslant x$ such
that $-L^{\prime}(1, \chi_q)/L (1, \chi_q) \geqslant  \log_2 x + \log_3 x + O(1)$.

And in \cites{Paul}, Paul established $\Omega$-theorems for $L^{\prime}(1, f, \chi)/L(1, f, \chi)$, where  $L(s, f, \chi)$ is the $L$-function associated by a holomorphic cusp form $f$ and a quadratic Dirichlet   character $\chi$. But the leading coefficient in Paul's result is $\frac{1}{8}$ instead of $1$. 

Furthermore, one can consider the family of Dirichlet $L$-functions associated by all  nonprincipal characters mod $q$, not just the quadratic characters. It was stated in \cite{LARL} that Lamzouri remarked that, by adapting the techniques in his paper \cite{Lamzouri}, one can establish that  $M_q \geqslant (1 + o(1)) \log_2 q$, as prime $q \to \infty$, where $M_q$ is defined as follows
\begin{align*}
    M_q: =   \max_{ \substack{  \chi \neq \chi_0 \\ \chi\,\lb\text{mod}\, q\rb}}\left|\frac{L^{\prime}}{L}(1, \chi)\right|.
\end{align*}
 For numerical computations for $M_q$, see \cites{LA, LARL}. And we  mention that in \cite{LYLA}, small values of $ |L^{\prime}(1, \chi)/L(1, \chi)|$ were studied.

In this paper, we will show that $M_q$ can be even larger.  Moreover, we  establish $\Omega$-results for any $\sigma \in (\frac{1}{2}, 1]$, not just limited to the case $\sigma = 1$. These results are established for  the Riemann zeta function as well, which improves on the work of Bohr and Landau  \cites{Lan1911, BL1913}.

First, we have the following result for  the logarithmic derivative of the Riemann zeta function on the $1$-line.
 \begin{theorem}\label{Zetasig=1}
 Let $\beta, \epsilon \in (0, 1)$ be fixed. Then for all sufficiently large $T$, we have
 \begin{align*}
\max_{T^{\beta} \leqslant t \leqslant  T}-\emph{Re} \frac{\zeta^{\prime}}{\zeta}(1 + \i t) \geqslant \log_2 T + \log_3 T + C_1 -\epsilon,
\end{align*}
 where $C_1$ is an explicit constant defined by \eqref{C1}. In particular, for all sufficiently large $T$, we have
 \begin{align*}
\max_{T^{\beta} \leqslant t \leqslant  T}\left|\frac{\zeta^{\prime}}{\zeta}(1 + \i t) \right|\geqslant \log_2 T + \log_3 T + C_1 -\epsilon.
\end{align*}
 \end{theorem}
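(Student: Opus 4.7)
The plan is to combine a short Dirichlet approximation of $-\zeta'/\zeta(1+\i t)$ by a sum over primes with a resonance argument in the spirit of Granville--Soundararajan and Lamzouri. Starting from $-\zeta'/\zeta(s)=\sum_{n}\Lambda(n)n^{-s}$ in $\sigma>1$, I would establish, for a truncation parameter $y=y(T)$ to be chosen, a decomposition
\[
-\frac{\zeta'}{\zeta}(1+\i t)=\sum_{p\le y}\frac{\log p}{p^{1+\i t}}+\sum_{p,\,k\ge 2}\frac{\log p}{p^{k(1+\i t)}}+E(t;y),
\]
where the prime-power sum is $O(1)$ uniformly in $t$ (and for resonated $t$ its real part is $\sum_{p,\,k\ge 2}\log p/p^{k}$ up to $o(1)$), while $E(t;y)$ admits a mean-square bound $\int_{T^{\beta}}^{T}|E(t;y)|^{2}\,dt=o(T)$ provided $y$ stays within a polylogarithmic range of $T$. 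Such an approximation can be obtained unconditionally from a smoothed Perron formula combined with the classical zero-free region for $\zeta$.

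The resonance step is the heart of the argument. Taking $y$ with $\log y=\log_{2}T+\log_{3}T+O(1)$, i.e.\ $y\asymp \log T\cdot \log_{2} T$, so that $\pi(y)=o(\log T)$, I would apply a Soundararajan-type resonator: compute
\[
\frac{\int_{T^{\beta}}^{T}\bigl(-\mathrm{Re}\,\tfrac{\zeta'}{\zeta}(1+\i t)\bigr)|R(t)|^{2}\,dt}{\int_{T^{\beta}}^{T}|R(t)|^{2}\,dt}
\]
for $R(t)=\sum_{n\in \mathcal{A}}r(n)n^{-\i t}$ with $\mathcal{A}$ a multiplicative set of $y$-smooth integers and $r(n)$ tuned so that each $p^{-\i t}$ in the prime sum has expectation against $|R|^{2}/\int |R|^{2}$ very close to $1$. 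The ratio lower bounds $\max_{t\in[T^{\beta},T]}\bigl(-\mathrm{Re}\,\zeta'/\zeta(1+\i t)\bigr)$; its main term is $\sum_{p\le y}\log p/p=\log y+M_{0}+o(1)$ by Mertens' second theorem, with the explicit constant $M_{0}=-\gamma-\sum_{p}(\log p)/(p(p-1))$. Adding the prime-power contribution $\sum_{p,\,k\ge 2}\log p/p^{k}$ (also recovered by the resonator) gives the main term $\log_{2}T+\log_{3}T+C_{1}$ and identifies $C_{1}$ as in \eqref{C1}.

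Combining the resonator lower bound for the prime sum with Cauchy--Schwarz applied to the $L^{2}$-estimate for $E(t;y)$ produces some $t_{0}\in[T^{\beta},T]$ at which $-\mathrm{Re}\,\zeta'/\zeta(1+\i t_{0})\ge \log_{2}T+\log_{3}T+C_{1}-\epsilon$, and the absolute-value bound is then immediate from $|z|\ge |\mathrm{Re}\,z|$. The principal obstacle I anticipate is the reach of the resonator: a naive Dirichlet/Kronecker resonance only handles $y\ll \log T$ and so produces $\log_{2}T+O(1)$, losing the $\log_{3}T$ gain. Pushing $y$ all the way to $\log T\cdot \log_{2} T$ requires a delicate choice of the weight $r(n)$ together with a sharp $L^{2}$-control on $E$, while tracking the explicit constants through Mertens' theorem, the prime-power contribution, and the $o(1)$ slippage of the resonator is what pins down $C_{1}$ without an additive $O(1)$ loss.
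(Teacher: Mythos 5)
Your overall shape (approximate $-\zeta'/\zeta(1+\i t)$ by a Dirichlet polynomial over primes, then resonate to make the prime sum of length $\asymp\log T\log_2 T$ large via Mertens) is the right one, and it is essentially the strategy of the paper. But there are two genuine gaps in the way you propose to execute it.

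First, the error-term handling fails. You truncate at $y\asymp\log T\log_2 T$ and propose to control the remainder $E(t;y)$ only in mean square, $\int_{T^\beta}^T|E|^2\,dt=o(T)$, feeding this into the resonator via Cauchy--Schwarz. This cannot work: the resonator weight is wildly non-uniform, with $\|R\|_\infty^2$ as large as $T^{2B+o(1)}$ while $\int|R|^2\asymp T^{1+B(2-\log 4)+o(1)}$, so Cauchy--Schwarz gives $\bigl|\int E|R|^2\bigr|/\int|R|^2\ll T^{B\log 2+o(1)}\bigl(\int|E|^2/T\bigr)^{1/2}$, and you would need a power saving $\int|E|^2\ll T^{\beta-\delta}$, not merely $o(T)$, to kill the error. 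The paper avoids this entirely by taking the truncation length $Y=\exp((\log T)^2)$ and using Perron plus the classical zero-free region to get a \emph{pointwise} approximation $-\zeta'/\zeta(1+\i t)=\sum_{n\le Y}\Lambda(n)n^{-1-\i t}+O(T^{-A_\beta})$ on $[T^\beta,T]$; the resonator then acts on this long polynomial, and only the primes $p\le X=B\log T\log_2 T$ survive the diagonal extraction (by positivity of $\widehat\Phi$ and of $r$), which is how the short length enters.

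Second, your constant bookkeeping is over-optimistic and does not actually identify $C_1$. You assert the weight $r(n)$ can be tuned so that every $p^{-\i t}$ with $p\le y$ resonates with expectation "very close to $1$", which would yield the main term $\log y-\gamma+o(1)$ (your Mertens constant $M_0$ plus the prime-power sum telescopes to $-\gamma$). This tuning is impossible: with a resonator of admissible size one is forced into a taper such as $r(p)=1-p/X$, and $\sum_{p\le X}\frac{\log p}{p}\bigl(1-\frac{p}{X}\bigr)=\log X-\gamma-\sum_{k\ge2}\sum_p\frac{\log p}{p^k}-1+o(1)$, costing an additive $-1$; moreover $\log X=\log_2T+\log_3T+\log B$ with $\log B=\log(1-\beta)-\log_2 4-\epsilon/2<0$, and the prime powers are discarded (not recovered) in the lower bound for $I_2$. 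These losses are exactly what produce $C_1=\log(1-\beta)-\log_2 4-\gamma-\sum_{k\ge2}\sum_p\log p/p^k-1$, whereas your computation would land on a strictly larger constant that the method does not deliver. (Also, with $y\asymp\log T\log_2 T$ one has $\pi(y)\sim\log T$, not $o(\log T)$.)
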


\begin{Remark}
    By the continuity of $\zeta^{\prime}(s)/\zeta(s)$, we find that 
    the following inequality 
\[ \left| \frac{\zeta^{\prime}}{\zeta}(s)\right| \geqslant  \log_2 t + \log_3 t+ C_1 -\epsilon, 
\]
has a solution $s = \sigma + \i t$ in the region $\{\sigma + \i t: \sigma > 1, \quad  t > \tau\}$ for any given $\tau  > 0$. 
\end{Remark}

In the next theorem, we give a lower bound for the measure of the set of $t$ for which  the  logarithmic derivative of the Riemann zeta function is large.

\begin{theorem}\label{Zetasig=1: meas}
 Let $\beta \in (0, 1)$ be fixed. Let $x >0$ be given, and define the function $\M(T, x)$ as
 \begin{align*}
\M(T, x) = \emph{meas}\{t \in [T^{\beta},  T]: -\emph{Re} \frac{\zeta^{\prime}}{\zeta}(1 + \i t) \geqslant \log_2 T + \log_3 T + C_1 -x\},
\end{align*}
 where $C_1$  is the constant defined by  \eqref{C1}. Then we have
 \[\M(T, x)\geqslant T^{1-\lb1-\beta\rb e^{-x} + o\lb1\rb},\quad as\quad T \to \infty.\]
 \end{theorem}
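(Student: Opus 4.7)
The plan is to upgrade the single-value estimate of Theorem \ref{Zetasig=1} to a measure estimate by combining a Dirichlet-polynomial approximation with a moment / large-deviation computation in the style of Granville--Soundararajan and Lamzouri. Choose $y$ so that $\sum_{p\leqslant y}(\log p)/p = \log_2 T + \log_3 T + C_1 + o(1)$; by Mertens this forces $y$ of size $\asymp \log T\cdot\log_2 T$. Write
\[
-\mathrm{Re}\,\frac{\zeta'}{\zeta}(1+\i t) \;=\; P(t) + E(t), \qquad P(t) := \sum_{p\leqslant y}\frac{\log p}{p}\cos(t\log p),
\]
and argue, using the same zero-density input that underlies Theorem \ref{Zetasig=1}, that outside an exceptional set of measure $o(T^{\beta})$ one has $|E(t)| = o(1)$. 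It then suffices to give a lower bound for the measure of $t\in[T^{\beta},T]$ on which $P(t)\geqslant V-x+o(1)$, where $V := \sum_{p\leqslant y}(\log p)/p$.

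The heart of the argument is a tilted-moment computation for $Y(t) := V - P(t) = \sum_{p\leqslant y}(\log p/p)\bigl(1-\cos(t\log p)\bigr)\geqslant 0$. For a parameter $\lambda>0$, I would evaluate the Laplace-type integral
\[
\frac{1}{T-T^{\beta}}\int_{T^{\beta}}^{T} e^{-\lambda Y(t)}\,dt \;=\; \prod_{p\leqslant y}\Bigl(\frac{1}{2\pi}\int_{0}^{2\pi}\exp\bigl(-\lambda\tfrac{\log p}{p}(1-\cos\theta)\bigr)\,d\theta\Bigr)\bigl(1+o(1)\bigr),
\]
the main term being obtained by expanding the exponential in a Taylor series, truncating, and using the $\mathbb{Q}$-linear independence of $\{\log p\}_{p}$ to show that only the ``diagonal'' terms (those indexed by $n=1$ in the resulting sums over $y$-smooth integers) survive; the off-diagonal exponential sums $\sum_{n} a_{n}(\lambda)\,t^{-\i\log n}$ are negligible provided $y$ is a sufficiently small power of $\log T$. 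The single-prime integrals equal $e^{-\lambda\log p/p}I_{0}(\lambda\log p/p)$ in terms of the modified Bessel function, and analyzing the saddle-point equation that makes the tilted mean of $Y$ equal to $x$ (using the asymptotics of $I_{0}$ for both large and small argument, together with Mertens' estimates) converts the product into a factor of the shape $T^{-(1-\beta)e^{-x}+o(1)}\cdot e^{-\lambda x}$ at the optimal $\lambda=\lambda(x)$; the factor $(1-\beta)$ enters because the sub-interval $[T^{\beta},T]$ contributes only $(1-\beta)\log T$ to the relevant log-scale.

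Finally, a Paley--Zygmund step converts this into the desired measure bound. Under the saddle-point choice of $\lambda$, the tilted probability measure $d\mu_{\lambda} := e^{-\lambda Y(t)}\,dt/Z_{\lambda}$ concentrates within $o(1)$ of $Y=x$, and hence for any fixed $\eta>0$,
\[
\mathrm{meas}\{t\in[T^{\beta},T]: Y(t)\leqslant x+\eta\} \;\geqslant\; \tfrac{1}{2}\,e^{\lambda(x-\eta)}\,Z_{\lambda} \;\geqslant\; T^{1-(1-\beta)e^{-x}+o(1)},
\]
which translates back to the stated lower bound for $\M(T,x)$. The principal obstacle is the off-diagonal / independence step at the critical length $y\asymp \log T\cdot\log_2 T$ required to match the constant $C_{1}$: one must show that high moments of $P(t)$ over $[T^{\beta},T]$ coincide with those of the associated random trigonometric sum uniformly for $\lambda$ in the saddle-point range. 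This requires a careful estimate of $y$-smooth exponential sums of the form $\sum_{n}c_{n}t^{-\i\log n}$, precisely the mechanism underlying the Granville--Soundararajan and Lamzouri large-deviation estimates.
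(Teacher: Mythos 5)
Your proposal is a genuinely different strategy from the paper's, but its central step fails at the parameters that your own saddle-point analysis forces, and this failure is precisely why the Granville--Soundararajan/Lamzouri large-deviation machinery has never reached the secondary term $\log_3 T$ with the constant $C_1$. To detect $P(t)\geqslant V-x$ with $V=\log_2 T+\log_3 T+C_1+o(1)$ you must take $y\asymp\log T\log_2 T$ and, as your Bessel-function analysis shows, $\lambda\asymp e^{-x}\log T$. Expanding $e^{\lambda P(t)}=\prod_{p\leqslant y}\bigl(I_0(u_p)+2\sum_{k\geqslant 1}I_k(u_p)\cos(kt\log p)\bigr)$ with $u_p=\lambda\log p/p$ produces a generalized Dirichlet polynomial whose coefficients have total mass $e^{\lambda V}=T^{(1+o(1))c\,e^{-x}\log_2 T}$ and are supported on $y$-smooth integers of size $T^{c'\log_2 T}$ and beyond. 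No mean-value theorem, and certainly not the qualitative $\mathbb{Q}$-linear independence of $\{\log p\}$, extracts the diagonal from such a sum over an interval of length only $T$: the trivial bound on the off-diagonal contribution exceeds the main term $\asymp T\prod_p e^{-u_p}I_0(u_p)$ by a positive power of $T$. If one retreats to moments $\int P(t)^k\,dt$, the diagonal dominates only for $k\lesssim\log T/\log_2 T$, and substituting $\lambda\asymp\log T/\log_2 T$ into your saddle-point equation shows that this range detects only $P(t)\geqslant\log_2 T+O(1)$ --- i.e., it recovers Lamzouri's bound but loses the entire $\log_3 T$ term, let alone $C_1-x$. So the step you flag as "the principal obstacle" is not a technicality to be handled by "a careful estimate of $y$-smooth exponential sums"; it is the reason this route does not reach the stated threshold.

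The paper avoids the issue by never estimating off-diagonal terms at all: it reruns the resonance argument of Theorem \ref{Zetasig=1} with the resonator parameter enlarged to $B\approx\frac{1-\beta}{\log 4}e^{-x}$, where off-diagonal terms are \emph{discarded by positivity} ($\widehat\Phi>0$ and $r(n)\geqslant 0$), so that the resonated average $M_2/M_1$ still exceeds the threshold $\widetilde{J_x}$ by a positive margin $\frac{1}{2}f(T)$; a pure Chebyshev splitting then gives $\mathrm{meas}(W_x)\geqslant\frac{1}{2}f(T)M_1(R,T)\big/\bigl((\log T)T^{2B+o(1)}\bigr)\geqslant T^{1-(\log 4)B+o(1)}$, which is the claimed bound. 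Two further points to note. First, the factor $(1-\beta)$ in the exponent arises from the resonator-length constraint $\beta+2B<1+B(2-\log 4)$, not from the interval $[T^\beta,T]$ "contributing $(1-\beta)\log T$ to the log-scale", so even a repaired large-deviation argument would not automatically land on the stated exponent. Second, your preliminary reduction (that $|E(t)|=o(1)$ outside a set of measure $o(T^\beta)$ after truncating at $p\leqslant y$) is unsupported: the mean square of the discarded tail over $[T^\beta,T]$ is of order $T/\log T$, so Chebyshev excludes only a set of measure $O(T/\log T)$, which can dwarf the target measure $T^{1-(1-\beta)e^{-x}+o(1)}$, and higher moments of the tail run into the same length obstruction. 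The paper instead keeps the full unconditional approximation with $Y=\exp((\log T)^2)$, valid for every $t\in[T^\beta,T]$ with error $O(T^{-A_\beta})$, so no exceptional set is needed.
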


 The following two theorems for Dirichlet $L$-functions are similar to the above two theorems. 
 
\begin{theorem}\label{Lsig=1}
 Let $\epsilon \in (0, 1)$ be fixed. Then for all sufficiently large prime $q$, we have
 \begin{align*}
 \max_{ \substack{  \chi \neq \chi_0 \\ \chi\,\lb\emph{mod}\, q\rb}}-\emph{Re} \frac{L^{\prime}}{L}(1, \chi) \geqslant \log_2 q + \log_3 q + C_2 - \epsilon,
\end{align*}
 where $C_2$ is an explicit constant defined by \eqref{C2}.  In particular, for all sufficiently large prime $q$, we have
 \begin{align*}
      \max_{ \substack{  \chi \neq \chi_0 \\ \chi\,\lb\emph{mod}\, q\rb}}\left|\frac{L^{\prime}}{L}(1, \chi)\right| \geqslant \log_2 q + \log_3 q + C_2 - \epsilon.
 \end{align*}
 \end{theorem}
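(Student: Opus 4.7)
The plan is to execute the Dirichlet-character analogue of the proof of Theorem \ref{Zetasig=1}: the continuous variable $t$ of the zeta argument is replaced by the character $\chi$, and Diophantine-approximation/measure estimates are replaced by orthogonality of characters mod $q$. I expect to use a resonance-type second-moment argument with a short Dirichlet polynomial $R(\chi) = \sum_{n \leq N} r(n) \chi(n)$, where $r$ is multiplicative and supported on squarefree integers built from small primes, and to bound $\max_\chi -\mathrm{Re}(L^{\prime}/L)(1, \chi)$ from below by the ratio
\[
\frac{\displaystyle\sum_{\chi \neq \chi_0} |R(\chi)|^2 \lb-\mathrm{Re}\frac{L^{\prime}}{L}(1, \chi)\rb}{\displaystyle\sum_{\chi \neq \chi_0}|R(\chi)|^2}.
\]

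First I would write $-L^{\prime}/L(1, \chi) = \sum_{n \geq 1} \Lambda(n)\chi(n)/n$ and perform a smooth truncation via Mellin inversion, using the standard zero-free region for $L(s, \chi)$, to reduce to a sum over $p \leq y$ with an error that is $o(1)$. The contribution of prime powers $p^k$ with $k \geq 2$ is $O(1)$ and will be absorbed into the constant. Next, choose $r(p)$ on a set of small primes $\mathcal{P} \subset [2, y]$ so as to align with the sign that makes $\lb-\mathrm{Re}\,\chi(p)\rb(\log p)/p$ large, and evaluate both sums above using the orthogonality relation $\sum_{\chi \pmod q}\chi(a)\overline{\chi(b)} = \phi(q)\mathbf{1}_{a \equiv b\,(q)}$. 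For $N = q^{1-\delta}$, the congruence $a \equiv b \pmod q$ with $a, b \leq N$ forces $a = b$, so both sums reduce to Euler products over $\mathcal{P}$, and their ratio is asymptotically $\sum_{p \in \mathcal{P}}(\log p)/p + O(1)$.

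Choosing $\mathcal{P}$ to be all primes up to $y$ with $y$ of order $\log q \cdot \log_2 q$ (or slightly larger) and applying Mertens' theorem gives $\sum_{p \leq y}(\log p)/p = \log y + M_0 + o(1) = \log_2 q + \log_3 q + (M_0 + o(1))$, where $M_0$ denotes Mertens' constant. Combining this leading term with the $O(1)$ contributions from prime squares, the smooth-truncation error, and $M_0$ reproduces the explicit constant $C_2$ in exactly the way $C_1$ is assembled in Theorem \ref{Zetasig=1}; the bound $|L^{\prime}/L(1, \chi)| \geq -\mathrm{Re}\,L^{\prime}/L(1, \chi)$ then gives the second inequality of the theorem for free.

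The main obstacle is the second-moment computation: the off-diagonal terms must be shown to be negligible, which constrains $N$ to a small power of $q$, and one must verify that this constraint is still compatible with $y$ as large as $\log q \cdot \log_2 q$. This is true because only $\log y$ enters the main term, and since $r$ is supported on squarefree numbers with prime factors at most $y$, the support of $R$ can be kept well below $q^{1-\delta}$ by truncating $r$ itself to a carefully chosen subfamily. Matching the sharp constant $C_2$, as opposed to merely an implied constant, requires tracking $M_0$, the prime-square tail, and the Mellin-truncation error uniformly in $q$, precisely as for $C_1$ in Theorem \ref{Zetasig=1}.
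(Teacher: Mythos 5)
Your overall architecture (resonate, apply orthogonality mod $q$, reduce to $\sum_p (\log p/p)\, r(p)$) is the right family of ideas, but the specific resonator you propose cannot prove this theorem. You take a \emph{short} resonator $R(\chi)=\sum_{n\leq N}r(n)\chi(n)$ with $N=q^{1-\delta}$ precisely so that orthogonality forces the pure diagonal $a=b$. That truncation imposes the constraint that the mass of $\sum_n |r(n)|^2$ be concentrated on $n\leq q^{1-\delta}$, i.e.\ roughly $\sum_p \frac{r(p)^2}{1+r(p)^2}\log p\lesssim \log q$; optimizing $\sum_p \frac{\log p}{p}r(p)$ under this constraint (split at $P_0=\log q$, use $r(p)\leq 1$ below $P_0$ and $r(p)\leq\frac12(P_0/p+ (p/P_0)r(p)^2)$ above) caps the attainable ratio at $\log_2 q+O(1)$. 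In particular you cannot keep $r(p)\approx 1$ for essentially all $p$ up to $y\asymp\log q\log_2 q$: the $L^2$-mass of such an $r$ lives at height $\exp\bigl((\tfrac13+o(1))\log q\log_2 q\bigr)$, which far exceeds any fixed power of $q$, and no ``carefully chosen subfamily'' below $q^{1-\delta}$ recovers this. So your method loses exactly the secondary term $+\log_3 q$ and the explicit constant $C_2$, which are the whole content of the theorem beyond Lamzouri's $(1+o(1))\log_2 q$; this is also why the paper's Theorems \ref{Zeta=1: theta} and \ref{Lsig=1: theta}, which genuinely do use short resonators, only reach $\log_2 q+O(\log_3 q)$.

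The paper's proof instead uses the \emph{long} resonator of Aistleitner--Mahatab--Munsch--Peyrot: $R(\chi)=\sum_{n\in\N}r(n)\chi(n)$ is the full convergent Euler product with $r$ completely multiplicative, non-negative, $r(p)=1-p/X$ for $p\leq X=B\log q\log_2 q$ and $B=\frac{1}{\log 4}e^{-\epsilon/2}$. One never needs the off-diagonal congruence classes $n\equiv k\ (\mathrm{mod}\ q)$ with $n\neq k$ to vanish: since $r\geq 0$ they contribute non-negatively and are simply dropped, giving the lower bound $S_1(R,q)\geq(q-1)\sum_n r(n)^2\geq q^{1+B(2-\log 4)+o(1)}$, while the pointwise bound $|R(\chi)|^2\leq q^{2B+o(1)}$ (from the Euler product and the prime number theorem) together with $(\log 4)B<1$ makes the removal of $\chi_0$ and of the possible exceptional quadratic character $\chi_e$ (which you do not address, but which must be excised from the zero-free region argument) harmless. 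Two further corrections you would need even granting the main term: the hypothesis that $q$ is prime is used to convert $\sum_{pn\equiv pk}$ back into $\sum_{n\equiv k}$ in the lower bound for $S_2$; and the constant is not Mertens' constant but comes from $\sum_{p\leq X}\frac{\log p}{p}(1-\frac{p}{X})=\log X-\gamma-\sum_{k\geq2}\sum_p\frac{\log p}{p^k}-1+O(e^{-A\sqrt{\log X}})$, where the $-1$ is produced by the smoothing weight $1-p/X$ and $\log B=-\log_2 4-\tfrac{\epsilon}{2}$ supplies the remaining piece of $C_2$.
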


\begin{theorem}\label{Lsig=1: number}
Let $x >0$ be given, and define the counting function $\NN(q, x)$ as
 \begin{align*}
\NN(q, x) = \#\{\chi\,\lb\emph{mod}\, q\rb : -\emph{Re} \frac{L^{\prime}}{L}(1, \chi) \geqslant \log_2 q + \log_3 q + C_2 - x\},
\end{align*}
 where $C_2$  is the constant defined by  \eqref{C2}. For prime $q \to \infty$, we have
 \[\NN(q, x)\geqslant q^{1-e^{-x} + o\lb1\rb}.\]
 \end{theorem}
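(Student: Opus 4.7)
My plan is to refine the moment-method argument behind Theorem~\ref{Lsig=1} so that it yields a quantitative count of characters, rather than just the existence of a single large one. The starting point is a Dirichlet-series truncation: for a parameter $y=y(q)$ with $y\to\infty$ and $\log y = o(\log q)$, I would establish the approximation
\[
-\text{Re}\frac{L'}{L}(1,\chi) = A_y(\chi) + E(\chi),\qquad
A_y(\chi) := -\text{Re}\sum_{p \leqslant y}\frac{(\log p)\chi(p)}{p - \chi(p)},
\]
with $|E(\chi)|$ small for all but $o(\phi(q))$ characters $\chi \pmod q$. This would follow from a second moment bound of the shape $\sum_{\chi \neq \chi_0} |E(\chi)|^2 \ll \phi(q)(\log_2 q)^{-c}$, proved by a contour-integral representation of $L'/L(1,\chi)$ together with a classical zero-density estimate for $L(s,\chi)$ close to the $1$-line; crucially this step is unconditional.

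Next I would compare the empirical distribution of $A_y(\chi)$ over characters modulo $q$ with that of the random Euler product
\[
Z_y(X) := -\text{Re}\sum_{p \leqslant y}\frac{(\log p)\,X(p)}{p-X(p)},
\]
where $X = \{X(p)\}_p$ are independent random variables uniform on the unit circle. Multiplicative character orthogonality shows that the joint moments of $\{\chi(p)\}_{p \leqslant y}$ over nonprincipal $\chi \pmod q$ match exactly those of $\{X(p)\}_{p \leqslant y}$, provided the total degree does not exceed $K := \lfloor (\log q)/(\log y) \rfloor - 1$: any nontrivial monomial in the $\chi(p)$ and $\overline{\chi(p)}$ for $p \leqslant y$ can only be congruent to $1$ modulo $q$ once its absolute value exceeds $q$. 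Matching moments up to order $K$ transfers the tail distribution of $A_y(\chi)$ into that of $Z_y(X)$, in the form
\[
\#\{\chi \pmod q : A_y(\chi) \geqslant V\} \geqslant (\phi(q)-1)\,\mathbb{P}(Z_y \geqslant V)\bigl(1 + o(1)\bigr),
\]
whenever $\mathbb{P}(Z_y \geqslant V) \gg \phi(q)^{-1+\delta}$ for some fixed $\delta > 0$.

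The heart of the argument is a large-deviation estimate for $Z_y$. By independence the Laplace transform factorizes,
\[
\E\bigl[\exp(\lambda Z_y)\bigr] = \prod_{p \leqslant y}\E\!\left[\exp\!\left(-\lambda\,\text{Re}\frac{(\log p)\,X(p)}{p-X(p)}\right)\right],
\]
and a saddle-point analysis around $\lambda = 1$, combined with Mertens-type asymptotics for $\sum_{p \leqslant y}(\log p)/(p-1)$, yields
\[
\mathbb{P}\bigl(Z_y \geqslant \log_2 q + \log_3 q + C_2 - x\bigr) \geqslant q^{-e^{-x} + o(1)},
\]
where $C_2$ coincides with the explicit constant appearing in Theorem~\ref{Lsig=1}: it arises precisely as the sum of a Mertens-type constant and the per-prime exponential-moment contribution at the saddle. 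Combining this with the first two steps yields $\NN(q,x) \geqslant q^{1 - e^{-x} + o(1)}$.

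The main technical obstacle is the simultaneous calibration of $y$, $K$, and the tilt parameter $\lambda$. One needs $y^K < q$ to keep the moment-matching step exact, yet $K$ must be large enough for the truncated Laplace transform to capture the Gumbel-type tail with the sharp constant $-e^{-x}$; the natural compromise is $y$ of polylogarithmic size in $q$ and $K$ of order $\log q/\log_3 q$. Verifying that the saddle-point expansion produces precisely the constant $C_2$ of Theorem~\ref{Lsig=1} requires a careful Taylor expansion of the per-prime Laplace transform and tight control of the error incurred by truncating to $p \leqslant y$ rather than summing over all primes.
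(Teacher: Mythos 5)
Your approach is genuinely different from the paper's, and unfortunately it runs into a fundamental obstruction rather than a fixable technicality. The paper proves this theorem by the long resonator method: it keeps the resonator $R(\chi)=\sum_n r(n)\chi(n)$ built from the completely multiplicative weight $r(p)=1-p/X$, retunes the length parameter to $B=\frac{1}{\log 4}\exp(-x+q^{-A}+f(q))$, and then extracts the count by a two-line Chebyshev-type splitting: $S_2^*\leqslant \widetilde{J_x}S_1^*+\sum_{\chi\in W_x}(\cdots)$ together with $S_2^*/S_1^*\geqslant J_x$ and the pointwise bounds $|R(\chi)|^2\leqslant q^{2B+o(1)}$, $|L'/L(1,\chi)|\ll\log q$, which give $\#W_x\gg f(q)S_1^*/(q^{2B+o(1)}\log q)\geqslant q^{1-(\log 4)B+o(1)}$. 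Your plan instead routes everything through moment matching with the random model and a saddle-point large-deviation estimate, i.e. the Granville--Soundararajan/Lamzouri strategy. The problem is that this strategy cannot reach the threshold $\log_2 q+\log_3 q+O(1)$. Orthogonality matches moments of $\{\chi(p)\}_{p\leqslant y}$ with the random model only up to total degree $K$ with $y^K<q$; with $y$ polylogarithmic in $q$ this forces $K\ll \log q/\log_2 q$ (note your stated $K\asymp\log q/\log_3 q$ is inconsistent with $y^K<q$ unless $y\leqslant(\log_2 q)^{O(1)}$, in which case $\max Z_y\approx\log y=O(\log_3 q)$ and the truncated sum cannot even reach $\log_2 q$). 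Now the $2K$-th moment with $K\asymp\log q/\log_2 q$ is dominated by characters with $A_y(\chi)\approx\log_2 q-\log_3 q$, of which there are $q^{1-o(1)}$. A putative set $S$ of $q^{1-e^{-x}+o(1)}$ characters with $A_y(\chi)\approx\log_2 q+\log_3 q+O(1)$ contributes to this moment at most
\begin{align*}
 q^{1-e^{-x}}\,\bigl(\log_2 q+\log_3 q+O(1)\bigr)^{2K}=q^{1-e^{-x}+o(1)},
\end{align*}
which is a proportion $q^{-e^{-x}+o(1)}$ of the full moment. So no admissible moment (and hence no Laplace transform built from admissible moments) can detect $S$, and your transfer principle ``matching moments up to order $K$ transfers the tail distribution \dots whenever $\mathbb{P}(Z_y\geqslant V)\gg\phi(q)^{-1+\delta}$'' is false in exactly the range this theorem lives in. This barrier is why the introduction credits Lamzouri's techniques only with $M_q\geqslant(1+o(1))\log_2 q$ and why the paper switches to long resonators, whose weight $|R(\chi)|^2$ is itself of size $q^{2B+o(1)}$ and is therefore capable of isolating such rare characters.

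Two further points. First, your identification of the saddle-point constant with $C_2$ is unjustified and almost certainly wrong: $C_2=-\log\log 4-\gamma-\sum_{k\geqslant 2}\sum_p(\log p)p^{-k}-1$ contains the terms $-\log\log 4$ and $-1$, which are artifacts of the specific resonator weight $1-p/X$ and of the constraint $(\log 4)B<1$ in the resonance method; they are not Mertens constants or per-prime Laplace-transform contributions, so ``verifying the constant'' would not close the gap even if the transfer step worked. Second, a minor sign slip: since $-\frac{L'}{L}(1,\chi)=\sum_n\Lambda(n)\chi(n)/n$, your $A_y(\chi)$ should be $+\mathrm{Re}\sum_{p\leqslant y}(\log p)\chi(p)/(p-\chi(p))$, not its negative.
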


The next two theorems are concerning the large values inside the critical strip.

\begin{theorem}\label{Zeta: strip}
 Let $\beta \in (0, 1)$, $\sigma \in (\frac{1}{2}, 1)$  be fixed. Then for all sufficiently large $T$, we have
 \begin{align*}
\max_{T^{\beta} \leqslant t \leqslant  T}-\emph{Re} \frac{\zeta^{\prime}}{\zeta}(\sigma + \i t)\geqslant  C_3\lb\sigma\rb   (\log T)^{1-\sigma}(\log_2 T)^{1-\sigma},
\end{align*}
where $C_3(\sigma)$ is some positive constant   which can be effectively computed.
 \end{theorem}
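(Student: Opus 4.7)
The plan is to apply the resonance method (Soundararajan, Aistleitner, Bondarenko--Seip, Lamzouri) to $-\zeta'/\zeta(\sigma+it)$ inside the critical strip. First, using a smoothed Perron formula together with standard zero-density estimates for $\zeta$, one reduces the question to a short prime sum:
\begin{equation*}
-\operatorname{Re}\frac{\zeta'}{\zeta}(\sigma+it)\;=\;-\operatorname{Re}\sum_{p\le X}\frac{\log p}{p^{\sigma+it}}\;+\;O(1)
\end{equation*}
for all but a sparse exceptional subset of $t\in[T^\beta,T]$, where $X$ is a suitable power of $\log T$; the prime-power terms with exponent $k\ge 2$ are absorbed into the $O(1)$.

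Second, construct a resonator $R(t)=\sum_{n\in\mathcal{N}}r(n)\,n^{-it}$ with $r$ multiplicative, supported on squarefree products of at most $k$ primes drawn from a window $W=[y_0,y_1]$, subject to $n\le T^{(1-\beta)/2-\epsilon}$ for all $n\in\mathcal{N}$. Expanding $|R(t)|^2$ and using the orthogonality relation $\int_{T^\beta}^T(m/n)^{it}\,dt=(T-T^\beta)\delta_{m=n}+O\lb|\log(m/n)|^{-1}\rb$, one finds, modulo off-diagonal error,
\begin{equation*}
\frac{\int_{T^\beta}^T\lb-\operatorname{Re}(\zeta'/\zeta)(\sigma+it)\rb|R(t)|^2\,dt}{\int_{T^\beta}^T|R(t)|^2\,dt}\;\sim\;\sum_{p\in W}\frac{\log p}{p^\sigma}\cdot\frac{r(p)}{1+r(p)^2},
\end{equation*}
which is a lower bound for $\max_{T^\beta\le t\le T}\lb-\operatorname{Re}(\zeta'/\zeta)(\sigma+it)\rb$.

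Third, optimize by setting $r(p)\equiv 1$ on $W=[y_0,y_1]$ with $y_1\asymp(\log T)(\log_2 T)$ and $k\sim(1-\beta)(\log T)/(2\log_2 T)$, so that $y_1^k\le T^{(1-\beta)/2-\epsilon}$ and the length constraint is met. The prime number theorem then gives
\begin{equation*}
\sum_{p\le y_1}\frac{\log p}{p^\sigma}\;\sim\;\frac{y_1^{1-\sigma}}{1-\sigma}\;\asymp_{\sigma,\beta}\;(\log T)^{1-\sigma}(\log_2 T)^{1-\sigma},
\end{equation*}
supplying the claim with an effective $C_3(\sigma)$.

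The main obstacle is off-diagonal control in the second step: the truncation of $\mathcal{N}$ to squarefree products of at most $k$ primes, needed to fit inside the length budget, must still retain essentially all of the diagonal mass of both integrals, and the off-diagonal contributions arising from pairs $m\neq n$ with $m/n$ close to $1$ must be shown to be of strictly smaller order than the diagonal. This is the technical heart of the Bondarenko--Seip refinement of Soundararajan's resonance method, adapted here to the weight $-\operatorname{Re}(\zeta'/\zeta)$ in place of $\log|\zeta|$ or $\arg\zeta$.
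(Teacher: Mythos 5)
Your first step (Perron plus a zero-density estimate to replace $-\zeta'/\zeta(\sigma+\i t)$ by a short Dirichlet polynomial over primes, outside a sparse exceptional set of $t$) matches the paper's Lemma \ref{Zeta:approx:sig} combined with Ingham's bound for $N(\sigma_0,T)$, up to a sign slip: since $-\zeta'/\zeta(s)=\sum_n\Lambda(n)n^{-s}$, the reduction should read $-\operatorname{Re}\frac{\zeta'}{\zeta}(\sigma+\i t)=+\operatorname{Re}\sum_{p\le X}\frac{\log p}{p^{\sigma+\i t}}+O(1)$.

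The genuine gap is in your resonator. To get the main term $\asymp(\log T)^{1-\sigma}(\log_2 T)^{1-\sigma}$ you need the prime window to extend to $y_1\asymp\log T\log_2 T$ with $r(p)\asymp 1$ there; but then the unrestricted multiplicative resonator has $\sum_p\log p\cdot r(p)^2/(1+r(p)^2)\asymp\log T\log_2 T$, i.e.\ a typical element of its support has $n\approx T^{\log_2 T}$, vastly exceeding your length budget $T^{(1-\beta)/2-\epsilon}$. Your fix — truncating to products of at most $k\sim(1-\beta)\log T/(2\log_2 T)$ primes out of the $\pi(W)\asymp\log T$ available — does not merely discard a negligible tail: it retains only a $T^{-c}$ fraction of the diagonal mass, and the ratio formula $\sum_p\frac{\log p}{p^\sigma}\frac{r(p)}{1+r(p)^2}$ fails; a direct count shows the truncated ratio degrades by a factor $\asymp k/\pi(W)\asymp 1/\log_2 T$, landing at $(\log T)^{1-\sigma}(\log_2 T)^{-\sigma}$. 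The same loss occurs for any short resonator: under the constraint $\sum_p\log p\cdot r(p)^2/(1+r(p)^2)\ll\log T$ the quantity $\sum_p\frac{\log p}{p^\sigma}\frac{r(p)}{1+r(p)^2}$ is $O((\log T)^{1-\sigma})$, so the claimed bound is out of reach by this route. This is exactly why the paper uses the \emph{long} resonator of Aistleitner--Mahatab--Munsch: $R(t)=\prod_{p\le X}(1-r(p)p^{-\i t})^{-1}$ with $r(p)=1-(p/X)^{\sigma}$ and $X=B\log T\log_2 T$ is never truncated; instead one exploits the pointwise bound $|R(t)|^2\le T^{2\sigma B+o(1)}$ together with the positivity of the Fourier transform of the Gaussian weight $\Phi$, so that the diagonal identity holds exactly for integrals over all of $\mathbb{R}$ and the restriction to $[T^{\beta},T]$ (and the removal of the bad set) costs only $T^{\beta+2\sigma B+o(1)}$, which is absorbed under the conditions \eqref{condition:B1}--\eqref{condition:B2}. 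You would need to replace your second and third steps by this mechanism (or something equivalent) for the proof to go through.
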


\begin{theorem}\label{L: strip}
 Let $\sigma \in (\frac{1}{2}, 1)$  be fixed. Then for all sufficiently large prime $q$, we have
 \begin{align*}
 \max_{ \substack{  \chi \neq \chi_0 \\ \chi\,\lb\emph{mod}\, q\rb}} -\emph{Re} \frac{L^{\prime}}{L}(\sigma, \chi)\geqslant  C_4\lb\sigma\rb  (\log q)^{1-\sigma}(\log_2 q)^{1-\sigma},
\end{align*}
where $C_4(\sigma)$ is some positive constant   which can be effectively computed.
 \end{theorem}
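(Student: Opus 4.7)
The plan is to establish this $\Omega$-result via a resonance-type argument adapted to the family of Dirichlet characters modulo a prime $q$, in direct analogy with the strategy behind Theorem~\ref{Zeta: strip} (with characters $\chi$ playing the role of vertical shifts $t$). The target scale matches the natural quantity $\sum_{p \leq X}(\log p)/p^{\sigma} \sim X^{1-\sigma}/(1-\sigma)$ evaluated at $X = c_0\log q \cdot \log_2 q$.

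First I would reduce $-L'/L(\sigma,\chi)$ to a prime sum. By a smoothed Perron-type contour shift from $\mathrm{Re}(s) > 1$, write
\[
-\frac{L'}{L}(\sigma,\chi) = \sum_{n} \frac{\Lambda(n)\,\chi(n)}{n^{\sigma}}\, w(n/Z) + \mathrm{Err}(\chi),
\]
for a smooth cutoff $w$ and a length $Z = q^{B}$ polynomial in $q$. The prime-power terms ($n = p^{k}$, $k \geq 2$) contribute $O(1)$ since $\sigma > 1/2$, and the error $\mathrm{Err}(\chi)$ is controlled unconditionally by hybrid zero-density/convexity estimates averaged over $\chi$, after discarding a negligible exceptional set of $\chi$ whose $L(s,\chi)$ has zeros near $\mathrm{Re}(s) = \sigma$.

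Next I would introduce a multiplicative resonator $R(\chi) = \sum_{m \leq N} r(m)\chi(m)$, with $r$ supported on squarefree integers whose prime factors lie in an interval $[Y, X]$ with $X = c_0\log q\log_2 q$, and with $r(p)$ chosen to maximize the per-prime ratio $r(p)(\log p)/(p^{\sigma}(1+r(p)^{2}))$ (e.g.\ $r(p) = 1$, giving a factor of $1/2$). The support bound $N$ is taken just below $q$ so that orthogonality of characters modulo $q$ collapses the relevant double sums to the Diophantine equality in $\mathbb{Z}$. A direct computation then yields
\[
\max_{\chi \neq \chi_0}\Bigl(-\mathrm{Re}\frac{L'}{L}(\sigma,\chi)\Bigr) \;\geq\; \frac{\sum_{\chi}\bigl(-\mathrm{Re}\frac{L'}{L}(\sigma,\chi)\bigr)|R(\chi)|^{2}}{\sum_{\chi}|R(\chi)|^{2}}
\;\sim\; \sum_{Y \leq p \leq X}\frac{\log p}{p^{\sigma}}\cdot\frac{r(p)}{1+r(p)^{2}}
\;\sim\; \frac{X^{1-\sigma}}{2(1-\sigma)}
\]
by the prime number theorem. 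Substituting $X = c_{0}\log q\log_{2} q$ produces the target $C_4(\sigma)(\log q)^{1-\sigma}(\log_{2} q)^{1-\sigma}$ with $C_4(\sigma) = c_{0}^{\,1-\sigma}/(2(1-\sigma))$.

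The main obstacle is the tension between the orthogonality constraint $N < q$ and the desired range of primes $[Y, X]$ with $X = c_{0}\log q\log_{2} q$: a naive resonator supported on all squarefree products of primes in $[Y, X]$ has $N \sim \exp(X) \gg q$, violating the diagonalization. Resolving this requires a carefully truncated resonator design (cf.\ short-resonator techniques of Soundararajan and Bondarenko--Seip); the resulting off-diagonal analysis of $\sum_{\chi}|R(\chi)|^{2}$ and of the first moment is the technical heart of the proof. A secondary difficulty is making the Dirichlet-polynomial approximation of Step~1 unconditional, which is handled via family-averaged zero-density estimates.
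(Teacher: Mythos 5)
Your proposal has a genuine gap, and you have in fact named it yourself: the entire difficulty of the theorem sits in the ``main obstacle'' paragraph, which you defer to ``a carefully truncated resonator design'' without carrying it out. A short resonator $R(\chi)=\sum_{m\le N}r(m)\chi(m)$ with $N<q$ and $r$ supported on squarefree products of primes up to $X=c_0\log q\log_2 q$ does not just ``violate the diagonalization'' -- the $L^2$ mass $\prod_{p\le X}(1+r(p)^2)$ with $r(p)\asymp 1$ is concentrated on integers of size about $e^{\theta(X)/2}=q^{(c_0/2+o(1))\log_2 q}\gg q$, so after truncating to $m\le N$ the ratio $\sum_{m\le N/p}|r(m)|^2/\sum_{m\le N}|r(m)|^2$ is not $\sim 1$ and your ``direct computation'' of $S_2/S_1$ does not go through. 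Worse, the standard short-resonator trade-off (a constraint of the shape $\sum_p\log p\cdot r(p)^2/(1-r(p)^2)\lesssim\log N\le\tfrac12\log q$, as in Hough's lemma, which this paper uses only for Theorems \ref{Zeta=1: theta} and \ref{Lsig=1: theta}) forces either $X\asymp\log q$ with $r(p)\asymp1$, or $X\asymp\log q\log_2 q$ with $r(p)\to0$; in both cases one loses a power of $\log_2 q$ against the target $(\log q)^{1-\sigma}(\log_2 q)^{1-\sigma}$. So the route you sketch is not merely incomplete -- as stated it is unlikely to deliver the claimed bound.

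The paper avoids this entirely by using the \emph{long} resonator of Aistleitner--Mahatab--Munsch--Peyrot: $R(\chi)=\sum_{n\in\N}r(n)\chi(n)$ is the full Euler product with $r$ completely multiplicative, $r(p)=1-(p/X)^{\sigma}$ for $p\le X$, with no support restriction $N<q$ at all. One does not force a diagonal; instead one uses that $r(n)\ge0$ (so off-diagonal terms in $S_1$ and $S_2$ can be discarded or retained in the favourable direction) together with the primality of $q$, which gives $\sum_{pn\equiv pk\,(q)}r(n)r(k)=\sum_{n\equiv k\,(q)}r(n)r(k)$ and hence $S_2/S_1\ge\sum_{p\le X}(\log p)\,r(p)/p^{\sigma}\asymp X^{1-\sigma}$ with $X=B\log q\log_2 q$; the size $|R(\chi)|^2\le q^{2\sigma B+o(1)}$ is controlled by the prime number theorem and only needs to be beaten by $S_1\ge q^{1+B\sigma(1-c(\sigma))+o(1)}$, which fixes the admissible $B$. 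Your Step 1 is also off in a minor way: the paper truncates the Dirichlet polynomial at $Y=(\log q)^{20/\epsilon}$ (a power of $\log q$, not $q^{B}$), which is what makes the exceptional-set contribution from the Montgomery zero-density estimate negligible against $S_1$.
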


\begin{Remark}
    The methods used in the proof of Theorem \ref{Zetasig=1: meas} and \ref{Lsig=1: number}  could be used to estimate the measure or the cardinality in the above two theorems. However, these kinds of results would be  weaker than those in Theorem \ref{Zetasig=1: meas} and \ref{Lsig=1: number}. So we have chosen not to pursue this avenue to maintain brevity in the paper.
\end{Remark}

We could  establish $\Omega$-theorems for $\text{Re}\lb e^{-\i \theta}\zeta^{\prime}(\sigma + \i t)/\zeta(\sigma + \i t) \rb$ and $\text{Re}\lb e^{-\i \theta}L^{\prime}(\sigma, \chi)/L(\sigma, \chi) \rb$ as well, but the results are weaker than the results for $-\text{Re}\lb \zeta^{\prime}(\sigma + \i t)/\zeta(\sigma + \i t) \rb$ and $-\text{Re}\lb L^{\prime}(\sigma, \chi)/L(\sigma, \chi) \rb$.  The following are two examples for $\sigma = 1$. 

\begin{theorem}\label{Zeta=1: theta}
 Let $\beta \in (0, 1)$  be fixed.   For $T\geqslant 10^{10}$,  we have \begin{align*}
\max_{T^{\beta}  \leqslant | t | \leqslant  T}\emph{Re} \lb e^{-\i \theta }\frac{\zeta^{\prime}}{\zeta}\lb1 + \i t\rb \rb \geqslant \log_2 T + O_{\beta}\lb\log_3 T \rb ,
\end{align*}
uniformly for all  $\theta \in [0, 2\pi]$. 
\end{theorem}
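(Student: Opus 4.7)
The plan is to combine a truncated Dirichlet-polynomial representation of $\zeta^{\prime}/\zeta(1+\i t)$ with a quantitative Kronecker / Dirichlet-box simultaneous Diophantine approximation, tracking how the phase shift $\theta$ enters. This is the $1$-line analogue of the classical Bohr--Landau method, adapted so that the alignment target depends on $\theta$.

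First, I would choose $y = y(T)$ of suitable size (a power of $\log T$ depending on $\beta$) and, using the same truncation device that underlies Theorem~\ref{Zetasig=1} together with the elementary $O(1)$ bound for prime-power contributions, reduce to exhibiting a $t \in [T^{\beta}, T]$ for which
\begin{align*}
-\text{Re}\lb e^{-\i \theta} \sum_{p \leqslant y} \frac{\log p}{p^{1+\i t}} \rb = -\sum_{p \leqslant y} \frac{\log p}{p}\cos(t \log p + \theta)
\end{align*}
is at least $\log_2 T + O_{\beta}(\log_3 T)$, since this prime sum approximates $\text{Re}(e^{-\i \theta}\zeta^{\prime}/\zeta(1+\i t))$ up to an $O(1)$ error outside a thin exceptional set of $t$. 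To produce such a $t$ it suffices to make $\cos(t \log p + \theta) \approx -1$, i.e.\ $t \log p \equiv \pi - \theta \pmod{2\pi}$, simultaneously for every prime $p \leqslant y$.

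Next, I would invoke simultaneous Diophantine approximation: the $\mathbb{Q}$-linear independence of $\{\log p\}_{p}$ combined with a Dirichlet box pigeonhole applied to the sample points $t_j = j T^{\beta}$ with $1 \leqslant j \leqslant T^{1-\beta}$ produces some $t \in [T^{\beta}, T]$ with
\begin{align*}
\|t \log p - (\pi - \theta)\|_{2\pi} \leqslant \delta \qquad \text{for every prime } p \leqslant y,
\end{align*}
where $\delta \ll T^{-c(1-\beta)/\pi(y)}$ for an absolute $c > 0$. Choosing $y$ so that $\pi(y) \asymp \log T/\log_2 T$, the error $\delta$ becomes negligible compared with any negative power of $\log T$. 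Inserting the expansion $\cos(\pi + u) = -1 + O(u^2)$ and Mertens' formula $\sum_{p \leqslant y}\log p/p = \log y + O(1)$ then yields
\begin{align*}
-\sum_{p \leqslant y} \frac{\log p}{p}\cos(t \log p + \theta) \geqslant \log y + O(1) = \log_2 T + O_{\beta}(\log_3 T),
\end{align*}
as desired.

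The main obstacle is the Diophantine step with the shifted target $\pi - \theta$. The sharper secondary term $\log_3 T + C_1 - \epsilon$ in Theorem~\ref{Zetasig=1} exploits that the unshifted target $0$ is preferred by a Granville--Soundararajan type resonator argument, an enhancement that does not transfer uniformly in $\theta$, and this is why we only claim $O_{\beta}(\log_3 T)$ here. One also needs to verify that the Diophantine-constrained set of $t$ intersects the large-measure complement of the exceptional set on which the prime-polynomial approximation fails; this is handled exactly as in the proof of Theorem~\ref{Zetasig=1}. Uniformity in $\theta$ is automatic, since $y$, $\pi(y)$, and $\delta$ are chosen independently of $\theta$, with $\theta$ entering only through the target of the approximation, which can be handled identically for every $\theta \in [0, 2\pi]$.
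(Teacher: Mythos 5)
The crux of your argument is the quantitative simultaneous approximation $\|t\log p-(\pi-\theta)\|_{2\pi}\leqslant\delta$ for all $p\leqslant y$ with $\delta\ll T^{-c(1-\beta)/\pi(y)}$, and this step does not follow from the tools you invoke; it is a genuine gap, not merely the source of a weaker secondary term. The Dirichlet box principle applied to the sample points $t_j$ is a pigeonhole on differences: it yields some $t=t_j-t_k\in[T^{\beta},T]$ with $\|t\log p\|_{2\pi}\leqslant\delta$ for all $p\leqslant y$, i.e.\ it solves only the \emph{homogeneous} problem with target $0$ (which suffices for $\theta=\pi$, equivalently for $-\mathrm{Re}\,\zeta'/\zeta$, but for no other $\theta$). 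Hitting the nonzero common target $\pi-\theta$ is an \emph{inhomogeneous} Kronecker-type problem, and the effective versions of Kronecker's theorem (Erd\H{o}s--Tur\'an--Koksma combined with the elementary lower bound $|\sum_p n_p\log p|\geqslant\prod_p p^{-|n_p|}$) force $T$ to be at least of size $\exp\lb\delta^{-\pi(y)}y\rb$, which is hopeless when $\pi(y)\asymp\log T/\log_2T$. This ineffectivity is exactly why the Bohr--Landau constants $k_1,k_2$ recalled in the introduction are not effective, and it is the obstacle the resonance method exists to circumvent; flagging the shifted target as "the main obstacle" and then downgrading the error term to $O_{\beta}(\log_3T)$ does not repair the step, because the step fails outright rather than losing a constant.

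The paper's proof sidesteps the inhomogeneous Diophantine problem by building the phase into the resonator: it takes $R(t)=\sum_{n\leqslant N}r(n)n^{-\i t}$ with $r$ completely multiplicative and $r(p)=-e^{-\i\theta}\lb1-(\log_2N)^{-2}\rb$ for $p\leqslant X=\log N/(\log_2N)^5$, so that in the diagonal terms of $I_2(R,T)$ the factor $-e^{-\i\theta}\overline{r(p)}$ equals $|r(p)|>0$ for every $\theta$; Hough's lemma (Rankin's trick) then gives $I_2/I_1\geqslant(1+o(1))\sum_{p\leqslant X}|r(p)|\log p/p=\log_2T+O(\log_3T)$. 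Two further points. Even if your Diophantine step were available, it produces a single $t$ (or a very sparse set), and you would still need that particular $t$ to avoid the exceptional set coming from the zero-density input to the approximation formula; this is not "handled exactly as in the proof of Theorem~\ref{Zetasig=1}", since there the approximation \eqref{zeta: approx1} holds for \emph{every} $t\in[T^{\beta},T]$, whereas here it does not, and the resonance method's averaging is what makes the bad set harmless. Finally, with $\pi(y)\asymp\log T/\log_2T$ your $\delta$ is $(\log T)^{-c(1-\beta)}$, a fixed negative power of $\log T$ rather than smaller than every negative power --- a harmless slip on its own, but symptomatic of the quantitative optimism in this step.
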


\begin{theorem}\label{Lsig=1: theta}
For all  prime $q\geqslant 10^{10}$, we have
 \begin{align*}
 \max_{ \substack{  \chi \neq \chi_0 \\ \chi\,\lb\emph{mod}\, q\rb}} \emph{Re} \lb e^{-\i \theta} \frac{L^{\prime}}{L}(1, \chi) \rb \geqslant \log_2 q + O\lb\log_3 q\rb,
\end{align*}
 
uniformly for all  $\theta \in [0, 2\pi]$.
 \end{theorem}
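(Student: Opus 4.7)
The plan is to mirror the strategy of Theorem \ref{Lsig=1}, simply replacing the target value $-1$ by $-e^{\i\theta}$ throughout. As a first step, truncate the Dirichlet series
\[
\frac{L^{\prime}}{L}(1,\chi) = -\sum_{n\geqslant 1}\frac{\Lambda(n)\chi(n)}{n}
\]
at a moderate level $y$: combining a smooth cut-off with the standard bounds for $\sum_{n\leqslant N}\Lambda(n)\chi(n)$ coming from the classical zero-free region of $L(s,\chi)$ (with a routine treatment of a possible exceptional real zero), one obtains
\[
\emph{Re}\bigl(e^{-\i\theta}\tfrac{L^{\prime}}{L}(1,\chi)\bigr) = -\sum_{p\leqslant y}\frac{\log p}{p}\emph{Re}\bigl(e^{-\i\theta}\chi(p)\bigr) + O(1),
\]
uniformly in $\theta\in[0,2\pi]$ and $\chi\pmod q$. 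The problem then reduces to constructing a non-principal $\chi\pmod q$ with $\chi(p)$ close to $-e^{\i\theta}$ for every prime $p\leqslant y$.

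I would produce such a $\chi$ by the same counting/pigeonhole argument that underpins Theorem \ref{Lsig=1: number}. For prime $q$, fix a primitive root $g\pmod q$ and parametrise characters as $\psi^a(g^k) = e^{2\pi\i ak/(q-1)}$. The requirement $|\psi^a(p)+e^{\i\theta}|<\delta$ for all primes $p\leqslant y$ is a simultaneous box condition on the vector
\[
\Bigl(\tfrac{a\log_g p}{q-1}-\tfrac{1}{2}-\tfrac{\theta}{2\pi}\Bigr)_{p\leqslant y,\,p\,\mathrm{prime}}
\]
in the torus $(\R/\mathbb{Z})^{\pi(y)}$. By an inhomogeneous simultaneous Diophantine argument (or, equivalently, an Erd\H{o}s--Tur\'an discrepancy estimate for the sequence of discrete logarithms, with Weil-type bounds for twisted character sums providing the requisite equidistribution), such $a\in[1,q-1]$ exists provided $\pi(y)\log(1/\delta) \leqslant (1+o(1))\log q$. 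Choosing $\log(1/\delta)\asymp\log_2 q$ and $y\asymp\log q$, the resulting character satisfies
\[
\emph{Re}\bigl(e^{-\i\theta}\tfrac{L^{\prime}}{L}(1,\chi)\bigr) \geqslant \sum_{p\leqslant y}\frac{\log p}{p} - O(\delta\log y) + O(1) = \log_2 q + O(\log_3 q),
\]
uniformly in $\theta$.

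The main obstacle is the effectiveness and uniformity in $\theta$ of the inhomogeneous Diophantine step. In Theorem \ref{Lsig=1}, the target $-1$ is itself a $(q-1)$-th root of unity, so it may be realised exactly at many small primes, which is what permits the sharper constant $\log_3 q + C_2 - \epsilon$. A generic $\theta$ prevents this exact alignment: the unavoidable $\delta$-error at every prime caps the useful range of $y$ near $\log q$ rather than $\log q \log_2 q$, and the clean gain of $\log_3 q + C_2$ is traded for an $O(\log_3 q)$ loss.
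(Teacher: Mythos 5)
Your approach—construct a single character with $\chi(p)\approx -e^{\i\theta}$ for all small primes via simultaneous Diophantine approximation of discrete logarithms—is genuinely different from the paper's (which uses a short Soundararajan-type resonator $R(\chi)=\sum_{n\leqslant q^{1/24}}r(n)\chi(n)$ with $r(p)=-e^{-\i\theta}(1-(\log_2 N)^{-2})$ and averages over all characters), but it has two gaps that I do not see how to close. First, the truncation step. The classical zero-free region gives $\psi(x;q,a)$ with error $x\exp(-c\sqrt{\log x})$ only for $x$ at least $\exp(C(\log q)^2)$; it gives no nontrivial cancellation in $\sum_{y<n}\Lambda(n)\chi(n)/n$ when $y\asymp\log q$. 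An approximation $\mathrm{Re}(e^{-\i\theta}L'/L(1,\chi))=-\sum_{p\leqslant y}\frac{\log p}{p}\mathrm{Re}(e^{-\i\theta}\chi(p))+O(1)$ with $y=(\log q)^{O(1)}$ requires the rectangle $\{\mathrm{Re}(z)>1-\epsilon,\ |\mathrm{Im}(z)|\leqslant y+2\}$ to be zero-free for $L(z,\chi)$, which unconditionally one can only guarantee for all $\chi$ outside a ``bad'' set of size $q^{c}$ via zero-density estimates (this is exactly Lemma \ref{DL: appr: sig} plus Montgomery's zero-density theorem in the paper). Since your argument produces one specific character, you have no way to certify that this particular character admits the truncation; the paper's averaged argument sidesteps this because the bad set contributes negligibly to the resonator sums.

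Second, the Diophantine step. The condition $\pi(y)\log(1/\delta)\leqslant(1+o(1))\log q$ is the volume/pigeonhole heuristic, not a proof: inhomogeneous simultaneous approximation (unlike the homogeneous case) requires genuine equidistribution of the vectors $(a\nu_p/(q-1))_{p\leqslant y}$ in the torus of dimension $d=\pi(y)$. The relevant Weyl sums are $\sum_{a=1}^{q-1}e\bigl(a\sum_p c_p\nu_p/(q-1)\bigr)$, which equal $q-1$ or $0$ according to whether $\prod_p p^{c_p}\equiv 1\ (\mathrm{mod}\ q)$; Weil bounds play no role. Any Fourier-analytic criterion (Erd\H{o}s--Tur\'an--Koksma, or tensor-product Selberg minorants) forces you to control these sums up to height $\|c\|_\infty\leqslant H$ with $H$ at least of order $\delta^{-1}$ per coordinate, and you can only rule out nontrivial relations when $e^{H\vartheta(y)}<q$, i.e.\ $Hy\lesssim\log q$. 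With $y\asymp\log q$ even $H=1$ fails, and in any case the two constraints cap $y$ at roughly $\log_2 q\cdot\log_3 q$, which yields only $\sum_{p\leqslant y}\log p/p\asymp\log_3 q$—far short of the target $\log_2 q$. (Your Principle-style heuristic that such characters exist is plausibly true, and is stated as Principle 9.1 in the paper, but it is presented there as an open heuristic precisely because no one knows how to prove it; the resonator method extracts the extreme value without ever exhibiting the character.)
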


\begin{Remark}
 When $\sigma \in (\frac{1}{2}, 1)$  is fixed,  we mention that one can use Montgomery's  method \cite{MZeta} to establish the $\Omega$-result that $\text{Re}\lb e^{-\i \theta}\zeta^{\prime}(\sigma + \i t)/\zeta(\sigma + \i t) \rb $ $=       \Omega_{+}\lb  (\log t)^{1-\sigma}(\log_2 t)^{1-\sigma}\rb$,   $\forall \theta \in [0, 2\pi]$, but the implied constant is smaller than the constant in Theorem \ref{Zeta: strip}. It is unclear how to  establish the similar result that $\text{Re}\lb e^{-\i \theta}L^{\prime}(\sigma, \chi)/L(\sigma, \chi) \rb$ $=       \Omega_{+}\lb  (\log q)^{1-\sigma}(\log_2 q)^{1-\sigma}\rb$, $\forall \theta \in [0, 2\pi]$, for Dirichlet $L$-functions ($\chi \neq \chi_0$).
\end{Remark}

In the paper, we employ resonance methods to establish the theorems. Specifically, our proof for the first six theorems relies on \textit{long resonator methods}, employing methods of Aistleitner-Mahatab-Munsch \cite{AMM} and Aistleitner-Mahatab-Munsch-Peyrot \cite{AMMP}. And to prove the last two theorems, we use \textit{short resonator methods},  employing methods of Soundararajan
\cite{SoundExtreme}, and combining ideas of  Bondarenko--Seip \cite{BS}. The reason is that for such general cases we are no longer able to use the long resonator methods as in \cites{AMM, AMMP}. Resonance methods first appeared in Voronin's work \cite{Voronin}, but his work did not receive sufficient attention.  A more general and powerful resonance method was introduced by Soundararajan
\cite{SoundExtreme}. Since then, resonance methods have been widely utilized to produce extreme values for zeta and $L$-functions. Later, Aistleitner \cite{CA} pioneered long resonator methods, further developing the resonance methods. For more information on  resonance methods, we refer readers to \cites{Voronin,  SoundExtreme, Hough, CA,  BS, BS2,  AMM, AMMP, dT, Sound}.

\bigskip

\textbf{Notations:} 1) in this paper, we use the short-hand
notations, $\,\log_2 T :\,= \log\log T,$  $\log_3 T :\,= \log\log\log T$ and so on. 2) $p$ denotes a prime. 3) $A$ denotes an absolute positive constant, which  may vary from line to line. And $A_{\beta}$ denotes a positive constant, which  may depend on $\beta$ and may vary from line to line. 4) $\chi_0$ denotes the principal character. 
\bigskip


\section{Proof of Theorem  \ref{Zetasig=1}}\label{proofZetasig=1}
First, we establish an approximation formula for $\zeta^{\prime}(1+ \i t)/\zeta(1+ \i t)$.
Set $ c = \frac{1}{\log Y}$ and $Y = \exp{\lb\lb \log T\rb^2\rb}$.  We use an effective version of Perron's formula (see \cite{koukou}*{Theorem 7.2}) to obtain the following\begin{align*}
    \sum_{n\leqslant Y} \frac{\Lambda(n)}{n^{1 +\i t}} =  \frac{1}{2\pi \i} \int_{c - \i \frac{T^{\beta}}{2}}^{c + \i \frac{T^{\beta}}{2}} 
-\frac{\zeta^{\prime}( 1 + \i t +s)}{\zeta( 1 + \i t +s)} \frac{Y^s}{s} \d s  + O \lb \frac{(\log Y)^2}{T^{\beta}} + \frac{\log Y}{Y}\rb .
\end{align*}

By the residue theorem,  we get
\begin{align*}
  \frac{1}{2\pi \i} \int_{c - \i \frac{T^{\beta}}{2}}^{c + \i \frac{T^{\beta}}{2}} 
-\frac{\zeta^{\prime}( 1 + \i t +s)}{\zeta( 1 + \i t +s)} \frac{Y^s}{s} \d s  = -\frac{\zeta^{\prime}( 1 + \i t )}{\zeta( 1 + \i t )}  + \frac{1}{2 \pi \i} \lb \int_{ c - \i  \frac{T^{\beta}}{2}}^{-\delta - \i \frac{T^{\beta}}{2}} +  \int_{-\delta - \i \frac{T^{\beta}}{2}}^{-\delta + \i \frac{T^{\beta}}{2}}   +  \int_{-\delta + \i \frac{T^{\beta}}{2}}^{c + \i \frac{T^{\beta}}{2}} \rb,
\end{align*}
 where $\delta = \frac{A}{\log T}$ will be appropriately chosen to avoid zeros of $\zeta$. Now we  make use of  the classical result of the zero-free region of $\zeta$ and upper bounds for $|\zeta^{\prime}/\zeta|$ in the zero-free region (for example, see \cite{MV}*{Theorem 6.7}).  We calculate that the second integral on the right side of above formula is bounded by
 \begin{align*}
     \ll (\log T)   Y^{-\delta} \int_{-\frac{T^{\beta}}{2}}^{\frac{T^{\beta}}{2}} \frac{\d y}{\sqrt{\delta^2 + y^2}} \ll (\log T)^2\cdot\exp{\lb-A \lb\log T\rb\rb},
 \end{align*}
and  the first and third integrals on the right side of above formula are bounded by
\begin{align*}
    \ll \frac{\log T}{T^{\beta}} \frac{1}{\log Y}\lb Y^c - Y^{-\delta} \rb \ll \frac{1}{T^{\beta}\log T}. 
\end{align*}

Taking into account the above information, we arrive at the following approximation
\begin{align}\label{zeta: approx1}
    \sum_{n\leqslant Y} \frac{\Lambda(n)}{n^{1 +\i t}} = -\frac{\zeta^{\prime}( 1 + \i t )}{\zeta( 1 + \i t )}  + O\lb T^{-A_{\beta}} \rb,\quad \forall T^{\beta} \leqslant t \leqslant T.
\end{align}

 Next we take $X = B\log T \log _2 T$, with $B = \frac{1 - \beta}{\log 4}e^{-\frac{\epsilon}{2}}$. By the definition of $B$, we have   $\beta-1 + (\log 4) B < 0$.  Define $\Phi(t) = e^{ -\frac{t^2}{2} }$ as in \cite{BS}. Note that the Fourier transform  of $\Phi$ is $\widehat \Phi(\xi):\,= 
 \int_{-\infty}^{\infty} \Phi(x) e^{-\i x\xi} dx = \sqrt{2\pi} \Phi(\xi)$, which is positive for all $\xi \in \R$.

Define $M_2(R, T)$ and $M_1(R, T)$ as follows ($R(t)$ will be defined soon)
\begin{align*}
    M_2(R, T) &=\int_{T^\beta}^{T} \text{Re}\lb   \sum_{n\leqslant Y} \frac{\Lambda(n)}{n^{1 +\i t}} \rb |R(t)|^2 \Phi(\frac{t\log T}{T}) \d t,\\
    M_1(R, T) & = \int_{T^\beta}^{T}  |R(t)|^2 \Phi(\frac{t\log T}{T}) \d t,
  \end{align*}
  then \begin{align}\label{ratioM2M1}
      \max_{T^{\beta} \leqslant t \leqslant  T}\text{Re}\lb   \sum_{n\leqslant Y} \frac{\Lambda(n)}{n^{1 +\i t}} \rb \geqslant \frac{M_2(R, T)}{M_1(R, T)}.
  \end{align}

As in \cites{AMM}, let  $r(n)$ be a completely multiplicative function and define its value at primes by 
 $$
 r(p)  = \begin{cases}
	1 - \frac{p}{X}, & \text{if\, $p \leqslant X$,}\\
            0, & \text{if\, $p > X$.}
		 \end{cases}
$$

Define $R(t)$ as follows
\begin{align*}
    R(t)  = \prod_p \frac{1}{1-\frac{r(p)}{p^{\i t}}} = \sum_{n \in \N} \frac{r(n)}{n^{\i t}}.
    \end{align*}
    
 Then by the prime number theorem we obtain \begin{align*}
   \log |R(t)| \leqslant \sum_{p \leqslant X} \log |\frac{1}{1-r(p)}|\leqslant  \frac{X}{\log X} + O\lb \frac{X}{(\log X)^2}\rb.  
 \end{align*}
 
 Thus $|R(t)|^2 \leqslant T^{2 B + o(1)}$. Clearly, we have the following bound by the choice of $Y$,\begin{align}\label{UpperBoundPartialSum}
    \left|\text{Re}\lb  \sum_{n\leqslant Y} \frac{\Lambda(n)}{n^{1 +\i t}} \rb\right| \leqslant \sum_{n\leqslant Y} \frac{\Lambda(n)}{n} \ll \log Y  \ll T^{o(1)}.
\end{align}

So we immediately have
\begin{align*}
    \left|  \int_{|t| \geqslant T} \text{Re}\lb   \sum_{n\leqslant Y} \frac{\Lambda(n)}{n^{1 +\i t}} \rb |R(t)|^2 \Phi(\frac{t\log T}{T}) \d t \right|\ll e^{-\frac{1}{4}(\log T)^2} \ll 1,
\end{align*}
and
\begin{align*}
    \left|  \int_{|t| \leqslant T^{\beta}} \text{Re}\lb   \sum_{n\leqslant Y} \frac{\Lambda(n)}{n^{1 +\i t}} \rb |R(t)|^2 \Phi(\frac{t\log T}{T}) \d t \right|\ll T^{\beta + 2B + o(1)}.
\end{align*}

As a result, we find that $2M_2(R, T) = I_2(R, T) + O(T^{\beta + 2B +o(1)})$, where $ I_2(R, T)$ is defined as\begin{align*}
     I_2(R, T) &=\int_{-\infty}^{+\infty} \text{Re}\lb   \sum_{n\leqslant Y} \frac{\Lambda(n)}{n^{1 +\i t}} \rb |R(t)|^2 \Phi(\frac{t\log T}{T}) \d t.
\end{align*}

And clearly, we have $2M_1(R, T) \leqslant I_1(R, T)$ and similarly $2M_1(R, T) = I_1(R, T) + O(T^{\beta + 2B +o(1)})$, where $ I_1(R, T)$ is defined as
\begin{align*}
    I_1(R, T) & = \int_{-\infty}^{+\infty}  |R(t)|^2 \Phi(\frac{t\log T}{T}) \d t.
\end{align*}

To give a lower bound for $I_1(R, T)$, we expand the product of $R(t)$, and in the summation, we only consider the contributions from the diagonal terms  (we can drop other terms since $\widehat \Phi(\xi)$ is positive):
\begin{align*}
    I_1(R, T) & = \int_{-\infty}^{+\infty} \sum_{\substack{k, m \in \N}} r(k)r(m) \lb\frac{k}{m}\rb^{\i t}\Phi(\frac{t\log T}{T}) \d t \\
    &\geqslant \sqrt{2\pi}\frac{T}{\log T}\sum_{n\in N} r^2(n)\\ &\geqslant T^{1+B(2-\log 4) + o(1)},
\end{align*}
where the last step follows from partial summation and the prime number theorem (see \cite{AMMP}*{page 841}). By the above estimate for $I_1(R, T)$,  we obtain
\begin{align}\label{M2M1I2I1}
    \frac{M_2(R, T)}{M_1(R, T)} \geqslant   \frac{I_2(R, T)}{I_1(R, T)}  + O\lb T^{\beta-1 + (\log 4) B + o(1)}\rb.
\end{align}

Next, we will give a lower bound for $I_2(R, T)$.  We will use the fact that $\widehat \Phi(\xi)$ is positive, $r(n)$ is non-negative and $r(n)$ is completely multiplicative:
\begin{align*}
  I_2(R, T) &= \sum_{p^{\nu} \leqslant Y} \frac{\log p}{p^{\nu} } \text{Re}\lb \int_{-\infty}^{+\infty} \sum_{n, m \in \N} r(n)r(m) \lb\frac{n}{p^{\nu}m}\rb^{\i t}\Phi(\frac{t\log T}{T}) \d t \rb\\
  &\geqslant \sum_{p \leqslant X} \frac{\log p}{p } \int_{-\infty}^{+\infty} \sum_{\substack{k, m \in \N\\n = pk}} r(n)r(m) \lb\frac{n}{pm}\rb^{\i t}\Phi(\frac{t\log T}{T}) \d t\\
  &= \lb  \sum_{p \leqslant X} \frac{\log p}{p } r(p) \rb \int_{-\infty}^{+\infty} \sum_{\substack{k, m \in \N}} r(k)r(m) \lb\frac{k}{m}\rb^{\i t}\Phi(\frac{t\log T}{T}) \d t \,.
\end{align*}

Recall the definition of $I_1(R, T)$, then we obtain
\begin{align}\label{ratioI2I1}
    \frac{I_2(R, T)}{I_1(R, T)} \geqslant \sum_{p \leqslant X} \frac{\log p}{p } r(p). 
\end{align}

By the prime number theorem (for instance, see \cite{RS62}*{page 68}), we have
\begin{align}\label{ResultComp}
  \sum_{p \leqslant X}  \frac{\log p}{p} \lb 1 - \frac{p}{X}\rb = \log X -\gamma - \sum_{k = 2}^{\infty} \sum_p \frac{\log p}{p^k} - 1 + O\lb e^{-A\sqrt{\log X}}\rb.
\end{align}

According to \eqref{M2M1I2I1}, \eqref{ratioI2I1} and \eqref{ResultComp}, we obtain \begin{align}\label{boundM2M1}
 \frac{M_2(R, T)}{M_1(R, T)} \geqslant \log_2 T + \log_3 T + \log B  -\gamma - \sum_{k = 2}^{\infty} \sum_p \frac{\log p}{p^k} - 1 &+ O\lb e^{-A\sqrt{\log_2 T + \log_3 T + \log B}}\rb\\ \nonumber &+ O\lb T^{\beta-1 + (\log 4) B + o(1)}\rb.
\end{align}

Recall that  $B = \frac{1 - \beta}{\log 4}e^{-\frac{\epsilon}{2}}$. Let $C_1$ be a constant satisfying the following
\begin{align}\label{C1}
    C_1 = \log(1-\beta) - \log_2 4  -\gamma - \sum_{k = 2}^{\infty} \sum_p \frac{\log p}{p^k} - 1, 
\end{align}
then the claim in the theorem follows from \eqref{zeta: approx1}, \eqref{ratioM2M1} and \eqref{boundM2M1}.

\section{Proof of Theorem \ref{Zetasig=1: meas}}\label{Zetasig=1: measSection}
The proof is a slightly modification of the proof of Theorem \ref{Zetasig=1}.
Set $$B = \frac{1-\beta}{\log 4} \exp{\lb  -x +   T^{-A_{\beta}} +  f(T)\rb}, \quad \text{and} \quad f(T) = \exp{\lb -\lb\log_2 T\rb^{\frac{1}{3}} \rb}, $$
where $A_{\beta}$ is slightly smaller than the constant inside the big $O(\cdot)$ in \eqref{zeta: approx1}.

And define $J_x$, $\widetilde{J_x}$ as
\begin{align*}
    J_x &=\log_2 T + \log_3 T + C_1 -x + T^{-A_{\beta}} + \frac{1}{2}f(T),\\
    \widetilde{J_x} &=\log_2 T + \log_3 T + C_1 -x + T^{-A_{\beta}}.
\end{align*}

Note that for all sufficiently large $T$, we have $\beta - 1 + (\log 4) B < \beta - 1 + (1-\beta)\exp{(-\frac{1}{2}x)} < 0.$ Then by \eqref{boundM2M1}, for all sufficiently large $T$, we have
\begin{align}\label{M2M1Jx}
    \frac{M_2(R, T)}{M_1(R, T)} \geqslant J_x.
\end{align}

Next, define the three sets $V_x$, $W_x$ and $Z_x$  as follows
\begin{align*}
   V_x& = \{t \in [T^{\beta},  T]:\text{Re}\lb   \sum_{n\leqslant Y} \frac{\Lambda(n)}{n^{1 +\i t}}\rb \leqslant \widetilde{J_x}\},\\
W_x& = \{t \in [T^{\beta},  T]:\text{Re}\lb   \sum_{n\leqslant Y} \frac{\Lambda(n)}{n^{1 +\i t}}\rb > \widetilde{J_x}\}, \\
  Z_x& = \{t \in [T^{\beta},  T]: -\text{Re} \frac{\zeta^{\prime}}{\zeta}(1 + \i t) > \widetilde{J_x} - T^{-A_{\beta}}\}.
\end{align*}
 
By the definitions of $Z_x$ and $W_x$, $W_x$ is a subset of $Z_x$, so \,$\text{meas}(Z_x)\geqslant \text{meas}(W_x)$. And by the definitions of $V_x$ and $W_x$, we obtain
\begin{align*}
    M_2(R, T) = \int_{V_x} + \int_{W_x} \leqslant \widetilde{J_x}\cdot M_1(R, T) + \int_{W_x} \text{Re}\lb   \sum_{n\leqslant Y} \frac{\Lambda(n)}{n^{1 +\i t}}\rb |R(t)|^2 \Phi(\frac{t\log T}{T}) \d t.
\end{align*}

The above inequality together with \eqref{M2M1Jx} give the following 
\begin{align}\label{W_x: lower}
\frac{1}{2}  f(T)\cdot M_1(R, T) \leqslant \int_{W_x} \text{Re}\lb   \sum_{n\leqslant Y} \frac{\Lambda(n)}{n^{1 +\i t}}\rb |R(t)|^2 \Phi(\frac{t\log T}{T}) \d t .
\end{align}

By \cite{MV}*{Theorem 6.7}, the estimate $|\zeta^{\prime}(1+\i t)/\zeta(1 + \i t)| \leqslant A\log t$ holds for all $t\geqslant 10$. And recall that we have the estimate $|R(t)|^2 \leqslant T^{2 B + o(1)}$. Combining with \eqref{zeta: approx1}, we  obtain
\begin{align}\label{W_x: upper}
    \int_{W_x} \text{Re}\lb   \sum_{n\leqslant Y} \frac{\Lambda(n)}{n^{1 +\i t}}\rb |R(t)|^2 \Phi(\frac{t\log T}{T}) \d t \leqslant \lb A\log T\rb\, T^{2B + o(1)} \, \text{meas}(W_x).
\end{align}

Recall that $\beta - 1 + (\log 4) B <  0.$ So we have $M_1(R,T)\geqslant T^{1+B(2-\log 4) + o(1)}$, which follows from the facts that $I_1(R,T)\geqslant T^{1+B(2-\log 4) + o(1)}$ and $2M_1(R, T) = I_1(R, T) + O(T^{\beta + 2B +o(1)})$. Combining this lower bound with \eqref{W_x: lower} and \eqref{W_x: upper}, we deduce that
\begin{align*}
 \text{meas}(W_x)  &\geqslant \frac{\frac{1}{2}f(T) M_1(R, T)}{A\log T\cdot T^{2B + o(1)} } \\
 &\geqslant  T^{\lb1  + o\lb 1\rb\rb \lb1 - \lb \log 4\rb B \rb }.
\end{align*}

By the choice of $B$, we obtain
\[ \text{meas}(W_x)  \geqslant  T^{1-\lb1-\beta\rb e^{-x} + o\lb1\rb},\]
which gives the claim in the theorem since $\text{meas}(Z_x)\geqslant \text{meas}(W_x)$.

\section{Proof of Theorem  \ref{Lsig=1}}\label{ProofLsig=1}
Let $G_q$ be the group of Dirichlet characters $\chi$ (mod $q$). By a classical result due  to Gronwall and Titchmarsh (for instance, see \cite{MV}*{Theorem 11.3}), $\exists A > 0$, such that for all $\chi \in G_q$, the following region 
\[  \{s = \sigma + \i t: \sigma > 1 - \frac{A}{\log \lb q \lb|t| + 2\rb\rb}\} \]
contains no zero of $L(s, \chi)$, unless $\chi$ is a quadratic character, in this case $L(s, \chi)$ contains at most one zero (which is real) in  the above region.  If this exception occurs, we denote this particular character by $\chi_e$.

Define $G_q^{*} = G_q \setminus \{\chi_0, \chi_e\}$. Let $\chi \in G_q^{*}$. Take $c =\frac{1}{\log Y}$, $T = q^2$ and  $Y = \exp{\lb\lb \log qT\rb^2\rb}$. By Perron’s formula, we have
\begin{align*}
    \sum_{n\leqslant Y} \frac{\Lambda(n)\chi(n)}{n} =  \frac{1}{2\pi \i} \int_{c - \i T}^{c + \i T} 
-\frac{L^{\prime}( 1 +s, \chi)}{L(1 + s, \chi)} \frac{Y^s}{s} \d s  + O \lb \frac{(\log Y)^2}{T} + \frac{\log Y}{Y}\rb .
\end{align*}

 Similarly as  the proof of Theorem \ref{Zetasig=1}, we shift the line of integral to the line $\text{Re(s)} = -\delta = - \frac{A}{\log qT}$, to pick out the residue $-L^{\prime}(1, \chi)/L(1, \chi)$. The three  integrals $\int_{ c - \i  T}^{-\delta - \i T}$ , $\int_{ -\delta - \i  T}^{-\delta + \i T}$, and $\int_{ -\delta + \i  T}^{c+ \i T}$  in the contour can be bounded by using upper bounds for $L^{\prime}/L$ in the zero-free region (for instance, see \cite{MV}*{Theorem 11.14}). Proceeding similarly, we arrive at
\begin{align}\label{aprox: L1}
   - \frac{L^{\prime}}{L}(1, \chi) =  \sum_{n\leqslant Y} \frac{\Lambda(n)\chi(n)}{n} + O\lb q ^{-A} \rb\,, \quad \forall \chi \in G_q^{*}.
\end{align}

Next,  let $X = B\log q \log_2 q$, with $B = \frac{1 }{\log 4}e^{-\frac{\epsilon}{2}}$,  so that $-1 + (\log 4) B < 0$. And define $r(n)$ as in  section \ref{proofZetasig=1}. Define $R(\chi)$ as
\begin{align*}
    R(\chi) = \sum_{n\in \N} r(n) \chi(n), 
\end{align*}
then similar to $|R(t)|^2 \leqslant T^{2 B + o(1)}$ in section \ref{proofZetasig=1}, we have
\begin{align}\label{Rchi:Bd}
    \left| R(\chi) \right|^2 \leqslant q^{2B + o(1)}, \quad \forall \chi \in G_q.
\end{align}
And using a similar bound as \eqref{UpperBoundPartialSum}, we have
\begin{align}\label{prodRchi:Bd}
  \text{Re}\lb  \sum_{n\leqslant Y} \frac{\Lambda(n)\chi(n)}{n} \rb \left| R(\chi) \right|^2 \leqslant q^{2B + o(1)}, \quad \forall \chi \in G_q.
\end{align}

Next, define $ S_2(R, q)$, $ S_1(R, q)$, $ S_2^{*}(R, q)$  and $ S_1^{*}(R, q)$ as follows
\begin{align*}
 S_2(R, q) & = \sum_{\chi \in G_q }  \text{Re}\lb  \sum_{n\leqslant Y} \frac{\Lambda(n)\chi(n)}{n} \rb |R(\chi)|^2,\\
  S_1(R, q) & = \sum_{\chi \in G_q}  |R(\chi)|^2,\\
S_2^{*}(R, q) & = \sum_{\chi \in G_q^{*} }  \text{Re}\lb  \sum_{n\leqslant Y} \frac{\Lambda(n)\chi(n)}{n}\rb |R(\chi)|^2,\\
  S_1^{*}(R, q) & = \sum_{\chi \in G_q^{*}}  |R(\chi)|^2,
\end{align*}
then
\begin{align}\label{maxRe: S2S1Star}
  \max_{\chi \in G_q^{*}}  \text{Re} \sum_{n\leqslant Y} \frac{\Lambda(n)\chi(n)}{n} \geqslant \frac{S_2^{*}(R, q)}{S_1^{*}(R, q)}.
\end{align}

By \eqref{Rchi:Bd} and \eqref{prodRchi:Bd}, we find that
\begin{align*}
    S_2(R, q) &=  S_2^{*}(R, q) +  O\lb q^{2 B +  o(1)}\rb,\\
    S_1(R, q) &=  S_1^{*}(R, q) +  O\lb q^{2 B +  o(1)}\rb.
\end{align*}

By the orthogonality of  Dirichlet characters and the non-negativity of $r(n)$, we have
\begin{align*}
    S_1(R, q) &= \phi(q) \sum_{\substack{n, k\in \N\\n  \equiv k\ (\textrm{mod}\ q)\\ \gcd(k, q) =1 }}r(n)r(k)
    \\&\geqslant (q-1)\sum_{n \in \N} r^2(n)
    \\&\geqslant q^{1 + B(2 - \log 4) + o(1) },
\end{align*}
again  the last step follows from partial summation and the prime number theorem, which is similar to $I_1(R, T) \geqslant T^{1+B(2-\log 4) + o(1)}$ in Section \ref{proofZetasig=1}.

By the above estimates and the trivial fact that $S_1^{*}(R, q) \leqslant  S_1(R, q)$,  we obtain
\begin{align}\label{S2S1starToS2S1}
  \frac{S_2^{*}(R, q)}{S_1^{*}(R, q)} \geqslant  \frac{S_2(R, q)}{S_1(R, q)} + O\lb q^{-1+\lb\log 4\rb B +o(1)} \rb.
\end{align}

To establish a lower bound for $S_2(R, q)$, we once again make use of the orthogonality of Dirichlet characters and the non-negativity of $r(n)$:
\begin{align*}
  S_2(R, q) &=      \sum_{p^{\nu}\leqslant Y} \frac{\log p}{p^{\nu}} \text{Re} \lb \sum_{n, m \in \N}r(n)r(m) \sum_{\chi \in G_q }\chi(p^{\nu}n) \overline{\chi(m)}\rb\\
  &\geqslant \sum_{p \leqslant Y} \frac{\log p}{p} \phi(q) \sum_{\substack{n, m \in \N\\pn  \equiv m\ (\textrm{mod}\ q)\\ \gcd(m, q) =1 }}r(n)r(m) \\ &\geqslant \lb\sum_{p \leqslant X} \frac{\log p}{p} r(p) \rb \phi(q) \sum_{\substack{n, k\in \N\\pn  \equiv pk\ (\textrm{mod}\ q)\\ \gcd(pk, q) =1 }}r(n)r(k). 
\end{align*}

Since $q$ is prime, we have\footnote{The  assumption that $q$ is prime, is crucial. Without this assumption, the two sums  may not be equal. In \cite{AMMP}, there is no assumption that $q$  should be prime. This is not correct since there are counterexamples. See \textbf{Remark 3} in \cite{DY23} for the counterexamples.}
\[ \phi(q) \sum_{\substack{n, k\in \N\\pn  \equiv pk\ (\textrm{mod}\ q)\\ \gcd(pk, q) =1 }}r(n)r(k) = \phi(q) \sum_{\substack{n, k\in \N\\n  \equiv k\ (\textrm{mod}\ q)\\ \gcd(k, q) =1 }}r(n)r(k) = S_1(R, q).\]

The above identity and inequality lead to
\begin{align}\label{S2S1:ratio}
    \frac{S_2(R, q)}{S_1(R, q)} \geqslant \sum_{p \leqslant X} \frac{\log p}{p} r(p).
\end{align}

Recall that $X = B\log q \log_2 q$. By \eqref{ResultComp} and \eqref{S2S1starToS2S1}, we obtain
\begin{align}\label{LowerBound: S2S1Star}
  \frac{S_2^{*}(R, q)}{S_1^{*}(R, q)} \geqslant  &\log_2 q + \log_3 q + \log B - \gamma - \sum_{k = 2}^{\infty} \sum_p \frac{\log p}{p^k} - 1 \\\nonumber &+ O\lb e^{-A\sqrt{\log_2 q + \log_3 q + \log B}}\rb  + O\lb q^{-1+\lb\log 4\rb B +o(1)} \rb.
\end{align}

And recall that $B = \frac{1 }{\log 4}e^{-\frac{\epsilon}{2}}$. Let $C_2$ be a constant satisfying the following
\begin{align}\label{C2}
    C_2 =  - \log_2 4  -\gamma - \sum_{k = 2}^{\infty} \sum_p \frac{\log p}{p^k} - 1, 
\end{align}
then the claim in the theorem follows from \eqref{aprox: L1},  \eqref{maxRe: S2S1Star}, and \eqref{LowerBound: S2S1Star}.
\section{Proof of Theorem  \ref{Lsig=1: number}}
Again, the proof is a slightly modification of the proof of Theorem \ref{Lsig=1} and is similar as the proof of Theorem \ref{Zetasig=1: meas}.
Set $$B = \frac{1}{\log 4} \exp{\lb  -x + q^{-A} +  f(q)\rb}, \quad \text{and} \quad f(q) = \exp{\lb -\lb\log_2 q\rb^{\frac{1}{3}} \rb}, $$
where $A$ is slightly smaller than the constant inside the big $O(\cdot)$ in \eqref{aprox: L1}.

And define $J_x$, $\widetilde{J_x}$ as
\begin{align*}
    J_x &=\log_2 q + \log_3 q + C_2 -x +  q^{-A} + \frac{1}{2}f(q),\\
    \widetilde{J_x} &=\log_2 q + \log_3 q + C_2 -x +  q^{-A}.
\end{align*}

For all sufficiently large $q$, we have $ - 1 + (\log 4) B <  - 1 + \exp{(-\frac{1}{2}x)} < 0.$ Then by \eqref{LowerBound: S2S1Star}, for all sufficiently large $q$, we have
\begin{align}\label{S2S1Jx}
     \frac{S_2^{*}(R, q)}{S_1^{*}(R, q)} \geqslant J_x.
\end{align}

Next, define the three sets $V_x$, $W_x$ and $Z_x$  as follows
\begin{align*}
   V_x& = \{\chi \in G^{*}_q: \text{Re} \lb  \sum_{n\leqslant Y} \frac{\Lambda(n)\chi(n)}{n} \rb \leqslant \widetilde{J_x}\},\\
W_x& = \{\chi \in G^{*}_q: \text{Re}\lb  \sum_{n\leqslant Y} \frac{\Lambda(n)\chi(n)}{n} \rb > \widetilde{J_x}\}, \\
  Z_x& = \{\chi \in G^{*}_q: -\text{Re} \frac{L^{\prime}}{L}(1, \chi) > \widetilde{J_x} - q^{-A}\}.
\end{align*}
 
By the definitions of $Z_x$ and $W_x$, $W_x$ is a subset of $Z_x$, so \,$\#(Z_x)\geqslant \#(W_x)$. Moreover, by the definitions of $V_x$ and $W_x$, we obtain
\begin{align*}
    S_2^{*}(R, q) = \sum_{\chi \in V_x} + \sum_{\chi \in W_x} \leqslant \widetilde{J_x}\cdot  S_1^{*}(R, q) + \sum_{\chi \in W_x} \text{Re} \lb  \sum_{n\leqslant Y} \frac{\Lambda(n)\chi(n)}{n} \rb |R(\chi)|^2 .
\end{align*}

The above inequality together with \eqref{S2S1Jx} give the following 
\begin{align}\label{W_x: lower:q}
\frac{1}{2}  f(q)\cdot S_1^{*}(R, q) \leqslant  \sum_{\chi \in W_x} \text{Re} \lb  \sum_{n\leqslant Y} \frac{\Lambda(n)\chi(n)}{n} \rb |R(\chi)|^2.
\end{align}

By \cite{MV}*{Theorem 11.14}, the estimate $\left|L^{\prime}(1, \chi)/L(1, \chi)\right| \leqslant A\log q,$ holds for all $\chi \in G_q^{*}$. Recall the estimate $|R(\chi)|^2 \leqslant q^{2 B + o(1)}$. Combining with \eqref{aprox: L1}, we  have
\begin{align}\label{W_x: upper:q}
    \sum_{\chi \in W_x} \text{Re} \lb  \sum_{n\leqslant Y} \frac{\Lambda(n)\chi(n)}{n} \rb |R(\chi)|^2\leqslant \lb A\log q\rb\, q^{2B + o(1)} \cdot \#(W_x).
\end{align}

Since  $ - 1 + (\log 4) B <  0$,  we have $S_1^{*}(R, q) \geqslant q^{1+B(2-\log 4) + o(1)}$, which follows from the facts that $S_1(R, q) \geqslant q^{1 + B(2 - \log 4) + o(1) }$ and $S_1(R, q) =  S_1^{*}(R, q) +  O\lb q^{2 B +  o(1)}\rb$ in Section \ref{ProofLsig=1}. Combining this lower bound with \eqref{W_x: lower:q} and \eqref{W_x: upper:q}, we get
\begin{align*}
 \#(W_x)  &\geqslant \frac{\frac{1}{2}f(q) S_1^{*}(R, q)}{\lb A\log q \rb \cdot q^{2B + o(1)} } \\
 &\geqslant  q^{\lb1  + o\lb 1\rb\rb \lb1 - \lb \log 4\rb B \rb }.
\end{align*}

By the choice of $B$, we obtain
\[ \#(W_x)  \geqslant  q^{1- e^{-x} + o\lb1\rb},\]
which gives the claim in the theorem since $\#(Z_x)\geqslant \#(W_x)$.

\section{Proof of Theorem   \ref{Zeta: strip}}\label{section: strip: zeta}
We first establish the following lemma, which could be useful to approximate $\zeta^{\prime}(s)/\zeta(s)$ when combing with zero-density result of $\zeta(s)$. This lemma is similar to Lemma 8.2 in \cite{GS} and  Lemma 2.1 in \cite{GSZeta}, which are concerning approximations for $\log \zeta(s)$ and $\log L(s, \chi)$. 
\begin{lemma}\label{Zeta:approx:sig}
    Let $\y \geqslant 3$, and $t\geqslant \y + 3$. Let $\frac{1}{2} \leqslant \sigma_0 < 1$ and suppose that the rectangle $\{z: \sigma_0 <  \emph{Re}(z) \leqslant 1, |\emph{Im}(z) - t| \leqslant \y +2\}$ is free of zeros of $\zeta(z)$. Then for any $\sigma \in  (\sigma_0,\, 3]$ and  $ \xi \in [t-\y, t+\y ]$, we have 
    
    \begin{align*}
    \left|\frac{\zeta^{\prime}}{\zeta}(\sigma + \i \xi) \right|\ll \frac{\log t}{\sigma - \sigma_0}.
\end{align*} 

    Further, for $\sigma \in (\sigma_0,\, 1]$ and $\sigma_1 \in (\sigma_0,\, \sigma)$, we have
      \begin{align*}
   - \frac{\zeta^{\prime}}{\zeta}(\sigma + \i t)  = \sum_{n \leqslant \y}
\frac{\Lambda(n)}{n^{\sigma+\i t} } + O\lb    \frac{\log t}{\sigma_1 - \sigma_0} \y^{\sigma_1 -\sigma}  \log \frac{\y}{\sigma-\sigma_1} \rb.
\end{align*}
    \end{lemma}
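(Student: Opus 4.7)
For Part~1, I would split by the size of $\sigma$. When $\sigma\in(1,3]$, the convergent Dirichlet series gives $|\zeta^{\prime}/\zeta(\sigma+\i\xi)|\leqslant -\zeta^{\prime}/\zeta(\sigma)\ll 1/(\sigma-1)\leqslant 1/(\sigma-\sigma_0)$, so the bound holds. When $\sigma\in(\sigma_0,1]$, I would invoke the classical partial-fraction identity (\cite{MV}*{Theorem~10.17}),
\[
\frac{\zeta^{\prime}}{\zeta}(\sigma+\i\xi) \;=\; \sum_{\substack{\rho=\beta+\i\gamma\\ |\gamma-\xi|\leqslant 1}} \frac{1}{\sigma+\i\xi-\rho} + O(\log t).
\]
Every zero in the sum satisfies $|\gamma-t|\leqslant|\gamma-\xi|+|\xi-t|\leqslant 1+Y\leqslant Y+2$, so it lies in the prescribed rectangle and hence (by hypothesis) has $\beta\leqslant\sigma_0$. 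Thus $|\sigma+\i\xi-\rho|\geqslant\sigma-\sigma_0$, and the Riemann--von Mangoldt formula bounds the number of such zeros by $O(\log t)$; combining these facts yields Part~1.

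For Part~2, the plan is a standard Perron-plus-contour-shift. Applying the effective truncated Perron formula (\cite{koukou}*{Theorem~7.2}) to the Dirichlet series $-\zeta^{\prime}/\zeta(s+w)=\sum_n\Lambda(n)n^{-s-w}$ at $s=\sigma+\i t$, with $c=(1-\sigma)+1/\log Y$ and truncation $T=Y$,
\[
\sum_{n\leqslant Y}\frac{\Lambda(n)}{n^{\sigma+\i t}} \;=\; \frac{1}{2\pi\i}\int_{c-\i Y}^{c+\i Y}\Bigl(-\frac{\zeta^{\prime}}{\zeta}(\sigma+\i t+w)\Bigr)\frac{Y^w}{w}\,\d w + E_{\mathrm{Per}},
\]
with $E_{\mathrm{Per}}\ll Y^{-\sigma}\log Y$. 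I would then shift the line of integration leftwards to $\mathrm{Re}\,w=\sigma_1-\sigma<0$, picking up the simple-pole residue $-\zeta^{\prime}/\zeta(\sigma+\i t)$ at $w=0$. On the shifted vertical line, the point $\sigma+\i t+w$ has real part $\sigma_1\in(\sigma_0,1)$ and imaginary part in $[t-Y,t+Y]$, so Part~1 applied with $\sigma_1$ in place of $\sigma$ gives $|\zeta^{\prime}/\zeta|\ll\log t/(\sigma_1-\sigma_0)$. Estimating trivially,
\[
\Bigl|\int_{\sigma_1-\sigma-\i Y}^{\sigma_1-\sigma+\i Y}\Bigl(-\frac{\zeta^{\prime}}{\zeta}(\sigma+\i t+w)\Bigr)\frac{Y^w}{w}\,\d w\Bigr| \;\ll\; \frac{\log t}{\sigma_1-\sigma_0}\,Y^{\sigma_1-\sigma}\int_{-Y}^{Y}\frac{\d y}{\sqrt{(\sigma-\sigma_1)^2+y^2}} \;\ll\; \frac{\log t}{\sigma_1-\sigma_0}\,Y^{\sigma_1-\sigma}\log\frac{Y}{\sigma-\sigma_1},
\]
which is precisely the stated error.

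The two remaining pieces---the horizontal segments $\mathrm{Im}\,w=\pm Y$ and the Perron error $E_{\mathrm{Per}}$---are routine but constitute the main bookkeeping obstacle. On each horizontal segment I would split the range $\mathrm{Re}\,w\in[\sigma_1-\sigma,c]$ at the point $1-\sigma+1/\log Y$: on the right sub-piece use the absolutely convergent Dirichlet-series bound $|\zeta^{\prime}/\zeta(\sigma+x+\i(t\pm Y))|\ll\log Y$, and on the left sub-piece use Part~1 with $\sigma$ replaced by $\sigma+x$. A short computation shows each segment contributes $\ll (\log t)\,Y^{-\sigma}/\log Y$; this is absorbed into the main vertical error since $\sigma_1-\sigma_0>0$ forces $Y^{-\sigma}<Y^{\sigma_1-\sigma}$. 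The same inequality handles $E_{\mathrm{Per}}$, and collecting everything yields the claimed approximation.
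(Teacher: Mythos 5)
Your overall strategy coincides with the paper's: Part~1 via the partial-fraction (zero-sum) formula for $\zeta'/\zeta$ together with the $O(\log t)$ bound on the number of zeros with $|\gamma-\xi|\leqslant 1$, and Part~2 via truncated Perron at $c=1-\sigma+1/\log Y$, a shift to $\mathrm{Re}\,w=\sigma_1-\sigma$ picking up the residue at $w=0$, the vertical segment estimated by Part~1 (this gives exactly the stated main error term), and the horizontal segments together with the Perron error absorbed because $\sigma_1>0$. All of that matches the paper's argument.

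One step fails as written, though: your treatment of $\sigma\in(1,3]$ in Part~1. The inequality $1/(\sigma-1)\leqslant 1/(\sigma-\sigma_0)$ is backwards; since $\sigma_0\leqslant 1$ we have $\sigma-1\leqslant\sigma-\sigma_0$, hence $1/(\sigma-1)\geqslant 1/(\sigma-\sigma_0)$. Concretely, for $\sigma=1+\delta$ with $\delta$ much smaller than $1/\log t$, the Dirichlet-series bound only yields $\ll 1/\delta$, which can greatly exceed the target $\log t/(\sigma-\sigma_0)\asymp\log t$. The gap is confined to the thin range $1<\sigma\leqslant 1+O(1/\log t)$ and is easy to close: the partial-fraction formula remains valid up to $\mathrm{Re}(s)=2$ (the pole term $1/(s-1)$ is $O(1)$ since $t\geqslant Y+3$), which is in effect how the paper proceeds --- it cites a single zero-sum estimate covering all of $\sigma\in(\sigma_0,3]$ rather than splitting at $\sigma=1$; and for $\sigma\geqslant 1+1/\log t$ your Dirichlet-series bound does give $\ll\log t\ll\log t/(\sigma-\sigma_0)$. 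A second, purely cosmetic point: your proposed split of the horizontal segments ``at $1-\sigma+1/\log Y$'' is degenerate, since that point equals $c$; in effect the whole segment is handled by Part~1, which is exactly what the paper does and which suffices.
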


\begin{proof}
  By  \cite{koukou}*{Lemma 8.2}, we have 
\begin{align*}
    \left|\frac{\zeta^{\prime}}{\zeta}(\sigma + \i \xi) \right|\leqslant \sum_{\substack{|\gamma - \xi|\leqslant 1,\\\zeta(\alpha+\i \gamma) = 0, \\ 0 < \alpha <1}} \frac{1}{\sigma - \alpha} + O\lb \log \xi\rb \ll \frac{\log t}{\sigma - \sigma_0},
\end{align*}
  where the last step follows from the inequality that $\alpha \leqslant \sigma_0$ by our assumption on the rectangle  and the inequality that $\#\{\alpha+\i \gamma: |\gamma - \xi|\leqslant 1,\, \zeta(\alpha+\i \gamma) = 0, \, 0 < \alpha <1\} \ll \log \xi \ll \log t$.

 Set $c = -\sigma + 1 + \frac{1}{\log \y}$. Again, by  Perron's formula, we find that
 \begin{align*}
      \sum_{n \leqslant \y}
\frac{\Lambda(n)}{n^{\sigma+\i t} } =    \frac{1}{2\pi \i} \int_{c - \i \y}^{c + \i \y} 
-\frac{\zeta^{\prime}( \sigma + \i t +s)}{\zeta( \sigma + \i t +s)} \frac{\y^s}{s} \d s  + O \lb \y^{-\sigma}\lb\log \y\rb^2\rb .
 \end{align*}

Note that if $\sigma_1 - \sigma \leqslant \text{Re}(s) \leqslant c$ and $|\text{Im}(s)|\leqslant \y$, then $\sigma_1 \leqslant \text{Re}(\sigma+\i t + s) \leqslant 1 + \frac{1}{\log \y}$ and $t -\y \leqslant \text{Im}(\sigma+\i t + s) \leqslant t+\y$.  By our assumption on the rectangle, we can move the line of integration from the line Re($s$) = $c$ to the line Re($s$) = $\sigma_1 - \sigma$, to pick out the residue:
 \begin{align}\label{ThreeInt: residue}
     \frac{1}{2\pi \i} \int_{c - \i \y}^{c + \i \y} 
-\frac{\zeta^{\prime}( \sigma + \i t +s)}{\zeta( \sigma + \i t +s)} \frac{\y^s}{s} \d s = -\frac{\zeta^{\prime}( \sigma + \i t )}{\zeta( \sigma + \i t )} +  \frac{1}{2\pi \i} \lb \int_{c-\i \y}^{\sigma_1 - \sigma  - \i \y} +  \int_{\sigma_1 - \sigma  - \i \y}^{\sigma_1 - \sigma  + \i \y}  + \int_{\sigma_1 - \sigma  + \i \y}^{c + \i \y} \rb.
 \end{align}
 
According to the first claim of the lemma, the first and third integrals on   the right side of \eqref{ThreeInt: residue} are bounded by
 \[\ll \int_{\sigma_1 -\sigma}^{1-\sigma + \frac{1}{\log \y}} \frac{\log t}{\sigma + x -\sigma_0} \frac{\y^x}{\y}\d x \ll \frac{\log t}{(\sigma_1 -\sigma_0)\log \y} \y^{-\sigma},\]
 and the second integral on the right side of \eqref{ThreeInt: residue} is bounded by
 \[\ll \int_{-\y}^{\y}  \frac{\log t}{\sigma_1 - \sigma_0} \frac{\y^{\sigma_1- \sigma}}{\sqrt{(\sigma_1 -\sigma)^2 + \xi^2}}\d \xi \ll \frac{\log t}{\sigma_1 - \sigma_0} \y^{\sigma_1 -\sigma}  \log \frac{\y}{\sigma-\sigma_1}. \]
 
 Now the second claim follows as well.
\end{proof}

Let $N(\sigma, T)$ denote the number of zeros of $\zeta(s)$ inside the rectangle $\{s: \text{Re}(s)\geqslant \sigma, 0< \text{Im}(s) \leqslant T\}$. For $\frac{1}{2} \leqslant \sigma_0 \leqslant 1$, $T \geqslant 2$, we have $N(\sigma_0, T)\ll T^{\frac{3(1-\sigma_0)}{2-\sigma_0}} \log^5 T$ by a classical result of Ingham \cite{Ingham}. Let $\epsilon \in (0, \sigma - \frac{1}{2}) $ be fixed. In Lemma \ref{Zeta:approx:sig}, we set $\sigma_0 = \sigma - \epsilon$, $\sigma_1 = \sigma_0 + \frac{1}{\log \y}$, and $\y = (\log T)^{\frac{20}{\epsilon}}$. Combining the zero-density result of Ingham, we  obtain
\begin{align}\label{zeta:equ:without:BT}
   - \frac{\zeta^{\prime}}{\zeta}(\sigma + \i t)  = \sum_{n \leqslant \y}
\frac{\Lambda(n)}{n^{\sigma + \i t} } + O\lb (\log T)^{-18} \rb, \quad \forall t \in [T^{\beta}, T]\setminus \B(T),
\end{align}
where $\B(T)$ is a ``bad''  set with measure satisfying
\begin{align}\label{meas: B(T)}
    \text{meas}\lb \B(T)\rb  \ll T^{\frac{3(1-\sigma + \epsilon)}{2-\sigma+\epsilon}}(\log T)^{5+\frac{20}{\epsilon}}.\end{align}

In the next, we take $X = B\log T \log _2 T$ and let $\Phi(t) = e^{ -\frac{t^2}{2} }$. Define $M_2(R, T)$, $M_1(R, T)$, $I_2(R, T)$ and $I_1(R, T)$ as follows ($R(t)$ to be defined later)
\begin{align*}
    M_2(R, T) &=\int_{T^\beta}^{T} \text{Re}\lb   \sum_{n\leqslant Y} \frac{\Lambda(n)}{n^{\sigma +\i t}} \rb |R(t)|^2 \Phi(\frac{t\log T}{T}) \d t,\\
    M_1(R, T) & = \int_{T^\beta}^{T}  |R(t)|^2 \Phi(\frac{t\log T}{T}) \d t,\\
     I_2(R, T) & =\int_{-\infty}^{+\infty} \text{Re}\lb   \sum_{n\leqslant Y} \frac{\Lambda(n)}{n^{\sigma +\i t}} \rb |R(t)|^2 \Phi(\frac{t\log T}{T}) \d t.\\
     I_1(R, T) & = \int_{-\infty}^{+\infty}  |R(t)|^2 \Phi(\frac{t\log T}{T}) \d t.
  \end{align*}
  then \begin{align}\label{ratioM2M1:sig}
      \max_{T^{\beta} \leqslant t \leqslant  T}\text{Re}\lb   \sum_{n\leqslant Y} \frac{\Lambda(n)}{n^{\sigma +\i t}} \rb \geqslant \frac{M_2(R, T)}{M_1(R, T)}.
  \end{align}

As in \cites{XY}, let  $r(n)$ be a completely multiplicative function and define its value at primes by 
 $$
 r(p)  = \begin{cases}
	1 - (\frac{p}{X})^{\sigma}, & \text{if\, $p \leqslant X$,}\\
            0, & \text{if\, $p > X$.}
		 \end{cases}
$$

Define $R(t) =    \sum_{n \in \N} r(n) n^{-\i t}.$ By the prime number theorem, we obtain  that $|R(t)|^2 \leqslant T^{2\sigma B + o(1)}$.
Instead of \eqref{UpperBoundPartialSum}, we would use  the following crude estimate 
\begin{align}\label{crude}
    \sum_{n\leqslant Y} \frac{\Lambda(n)}{n^{\sigma}}  \ll Y \ll T^{o(1)}. \end{align}
Then proceeding similarly as in Section \ref{proofZetasig=1},  we find that $2M_2(R, T) = I_2(R, T) + O(T^{\beta + 2 \sigma B +o(1)})$ and $2M_1(R, T) = I_1(R, T) + O(T^{\beta + 2\sigma B +o(1)})$.

By partial summation and the prime number theorem, we have (see \cite{DThesis}*{page 79}) \begin{align*}
    I_1(R, T) & = \int_{-\infty}^{+\infty} \sum_{\substack{k, m \in \N}} r(k)r(m) \lb\frac{k}{m}\rb^{\i t}\Phi(\frac{t\log T}{T}) \d t \\
    &\geqslant \sqrt{2\pi}\frac{T}{\log T}\sum_{n\in N} r^2(n)\\ &\geqslant T^{1+B\sigma\lb 1- c\lb\sigma\rb\rb + o(1)},
\end{align*}
 where
\begin{align*}
    c\lb\sigma\rb:\, = \int_0^1 \frac{t^\sigma}{2-t^\sigma} \d t.
\end{align*}

In order to make  error terms small, we require
\begin{align}\label{condition:B1}
    \beta + 2 \sigma B  < 1 +B\sigma\lb 1- c\lb\sigma\rb\rb.
\end{align}

 By the above estimate for $I_1(R, T)$ and the condition on $B$, we obtain
\begin{align}\label{M2M1I2I1:sig}
    \frac{M_2(R, T)}{M_1(R, T)} \geqslant   \frac{I_2(R, T)}{I_1(R, T)}  + O\lb T^{\beta + 2\sigma B - 1 - B\sigma\lb 1- c\lb\sigma\rb\rb + o(1)}\rb.
\end{align}

Similar to  \eqref{ratioI2I1}, we have
\begin{align*}
    \frac{I_2(R, T)}{I_1(R, T)} \geqslant \sum_{p \leqslant X}  \frac{\log p}{p ^{\sigma}} r(p). 
\end{align*}

When $\sigma \in [\frac{1}{2}, 1)$, by the prime number theorem, we have
\begin{align*}
  \sum_{p \leqslant X}  \frac{\log p}{p^{\sigma}} \lb 1 - \frac{p^{\sigma}}{X^{\sigma}}\rb = \lb \frac{\sigma}{1-\sigma}  + o\lb 1\rb \rb X^{1-\sigma}\,,\quad as\quad T \to \infty.
\end{align*}

Recall that $X = B \log T \log_2  T$, then we obtain \begin{align}\label{I2I1:sig}
 \frac{I_2(R, T)}{I_1(R, T)}\geqslant  \lb \frac{\sigma}{1-\sigma}  + o\lb 1\rb \rb B^{1-\sigma} \lb \log T \rb^{1-\sigma} \lb \log_2 T \rb^{1-\sigma}.
\end{align}

By \eqref{meas: B(T)}, for $M_2(R, T)$, the integration over the set $\B(T)$ is at most $\ll T^{2\sigma B +\frac{3(1-\sigma + \epsilon)}{2-\sigma+\epsilon} +o(1)} $. This contribution is negligible, if we require the following 
\begin{align}\label{condition:B2}
  2\sigma B +\frac{3(1-\sigma + \epsilon)}{2-\sigma+\epsilon}  < 1 +B\sigma\lb 1- c\lb\sigma\rb\rb.
\end{align}

Let $B$ be a positive constant satisfying \eqref{condition:B1} and \eqref{condition:B2}, then
the claim in the theorem follows\footnote{When $\sigma $ is away from $1/2$, for example, say $\sigma \geqslant 3/4 $ or $\sigma \geqslant 4/5$, then we can use other types of zero-density results to improve the constants. To keep the paper concise, we do not distinguish  these cases.} from  \eqref{zeta:equ:without:BT},    \eqref{ratioM2M1:sig}, \eqref{M2M1I2I1:sig} and \eqref{I2I1:sig}.

 \section{Proof of Theorem   \ref{L: strip}}\label{SectionProofLSrtip}
 Similar to Lemma \ref{Zeta:approx:sig}, we have the following Lemma.
\begin{lemma}\label{DL: appr: sig}
    Let $q$ be a prime larger than $3$. Let $\y\geqslant 3$, $-3q \leqslant t\leqslant 3q$,   and $\frac{1}{2} \leqslant \sigma_0 < 1$. Suppose that the rectangle $\{z: \sigma_0 <  \emph{Re}(z) \leqslant 1, |\emph{Im}(z) - t| \leqslant \y +2\}$ is free of zeros of $L(z, \chi)$, where $\chi$ is a non-principal character
 (mod $q$). Then for any $\sigma \in  (\sigma_0,\, 3]$ and  $ \xi \in [t-\y, t+\y ]$, we have 
    
    \begin{align*}
    \left|\frac{L^{\prime}}{L}(\sigma + \i \xi, \chi) \right|\ll \frac{\log q}{\sigma - \sigma_0}.
\end{align*} 

    Further, for $\sigma \in (\sigma_0,\, 1]$ and $\sigma_1 \in (\sigma_0,\, \sigma)$, we have
      \begin{align*}
   - \frac{L^{\prime}}{L}(\sigma + \i t, \chi)  = \sum_{n \leqslant \y}
\frac{\Lambda(n)}{n^{\sigma+\i t}} \chi(n)  + O\lb    \frac{\log q}{\sigma_1 - \sigma_0} \y^{\sigma_1 -\sigma}  \log \frac{\y}{\sigma-\sigma_1} \rb.
\end{align*}
    \end{lemma}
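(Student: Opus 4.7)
The proof should parallel the proof of Lemma \ref{Zeta:approx:sig} almost verbatim, with the analogues of Koukoulopoulos's lemmas replaced by their Dirichlet $L$-function counterparts. Here is the plan.

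For the first claim, I would start from the standard partial-fraction expansion for $L^{\prime}/L(z,\chi)$ (e.g.\ \cite{MV}*{Lemma 12.6} or \cite{koukou}*{Lemma 8.2}), which asserts that for any $z = \sigma + \i \xi$ with $\sigma \in (\sigma_0, 3]$ and $|\xi| \leqslant 3q$,
\begin{align*}
    \frac{L^{\prime}}{L}(z, \chi) = \sum_{\substack{L(\rho, \chi) = 0 \\ |\mathrm{Im}(\rho) - \xi | \leqslant 1}} \frac{1}{z - \rho} + O(\log q).
\end{align*}
Under the zero-free hypothesis on the rectangle, every zero $\rho = \alpha + \i \gamma$ appearing in the sum satisfies $\alpha \leqslant \sigma_0$, so $|z-\rho| \geqslant \sigma - \sigma_0$. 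Together with the classical bound $\#\{\rho: L(\rho,\chi)=0,\ |\mathrm{Im}(\rho)-\xi|\leqslant 1\} \ll \log q$ (valid since $|\xi| \ll q$), this yields the first claim.

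For the second claim, I would use an effective Perron formula (\cite{koukou}*{Theorem 7.2}) with $c = 1 - \sigma + \frac{1}{\log \y}$ to write
\begin{align*}
    \sum_{n \leqslant \y} \frac{\Lambda(n)\chi(n)}{n^{\sigma + \i t}} = \frac{1}{2\pi \i} \int_{c - \i \y}^{c + \i \y} -\frac{L^{\prime}}{L}(\sigma + \i t + s, \chi) \frac{\y^s}{s}\, \d s + O\bigl(\y^{-\sigma} (\log \y)^2\bigr).
\end{align*}
Next I shift the line of integration from $\mathrm{Re}(s) = c$ to $\mathrm{Re}(s) = \sigma_1 - \sigma$. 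By the zero-free hypothesis and the restriction $|t| \leqslant 3q$, all translated points $\sigma + \i t + s$ on the new contour and on the horizontal connectors lie in the rectangle where the first claim applies, so $|L^{\prime}/L|$ is bounded by $\frac{\log q}{\sigma_1 - \sigma_0}$ on the new contour (and by $\frac{\log q}{\sigma+\mathrm{Re}(s)-\sigma_0}$ on the horizontal pieces). The only pole crossed is $s=0$, whose residue is $-L^{\prime}/L(\sigma+\i t,\chi)$.

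The two horizontal integrals contribute
\begin{align*}
    \ll \int_{\sigma_1 - \sigma}^{c} \frac{\log q}{\sigma + x - \sigma_0} \frac{\y^x}{\y}\, \d x \ll \frac{\log q}{(\sigma_1 - \sigma_0)\log \y}\, \y^{-\sigma},
\end{align*}
while the vertical integral on $\mathrm{Re}(s) = \sigma_1 - \sigma$ is bounded by
\begin{align*}
    \ll \int_{-\y}^{\y} \frac{\log q}{\sigma_1 - \sigma_0}\, \frac{\y^{\sigma_1 - \sigma}}{\sqrt{(\sigma - \sigma_1)^2 + \xi^2}}\, \d \xi \ll \frac{\log q}{\sigma_1 - \sigma_0}\, \y^{\sigma_1 - \sigma} \log \frac{\y}{\sigma - \sigma_1}.
\end{align*}
Combining these estimates with the Perron identity gives the claimed approximation.

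The only real subtlety, compared to the zeta-function case, is ensuring that the horizontal restriction $|t| \leqslant 3q$ is sufficient to control both the Perron error and the zero-counting bound uniformly in $q$; this is why the hypothesis $-3q \leqslant t \leqslant 3q$ appears, and any constant in place of $3$ would work equally well. No essentially new idea is required beyond what was used for Lemma \ref{Zeta:approx:sig}.
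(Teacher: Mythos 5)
Your proposal is correct and follows essentially the same route as the paper: the first claim via the partial-fraction/zero-counting bound (the paper cites \cite{koukou}*{Lemma 11.4}), and the second via the effective Perron formula with $c = 1-\sigma+\frac{1}{\log Y}$, a contour shift to $\mathrm{Re}(s)=\sigma_1-\sigma$, and the same bounds on the horizontal and vertical integrals. The paper itself simply remarks that after the first claim the remaining steps are identical to those in the proof of Lemma \ref{Zeta:approx:sig}, which is exactly what you carried out.
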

\begin{proof}
By  \cite{koukou}*{Lemma 11.4}, we have 
\begin{align*}
    \left|\frac{L^{\prime}}{L}(\sigma + \i \xi, \chi) \right|\leqslant \sum_{\substack{|\gamma - \xi|\leqslant 1,\\L(\beta+\i \gamma, \chi) = 0, \\ 0 < \beta <1}} \frac{1}{\sigma - \beta} + O\lb \log q\rb \ll \frac{\log q}{\sigma - \sigma_0}.
\end{align*}    

The following steps are the same as in the proof of Lemma \ref{Zeta:approx:sig}.
\end{proof}

Let $N(\sigma, T, \chi)$ denote the number of zeros of $L(s, \chi)$ inside the rectangle $\{s: \text{Re}(s)\geqslant \sigma,  |\text{Im}(s)| \leqslant T\}$. For $\frac{1}{2} \leqslant \sigma_0 \leqslant 1$, $T \geqslant 2$, we have $\sum_{\chi \in G_q} N(\sigma_0, T, \chi)\ll \lb qT\rb^{\frac{3(1-\sigma_0)}{2-\sigma_0}} \log^{14} \lb qT\rb$, by a zero-density result of Montgomery \cite{Mon71}*{Theorem 12.1}. Let $\epsilon \in (0, \sigma - \frac{1}{2}) $ be fixed.  Set $t = 0$, $\sigma_0 = \sigma - \frac{1}{2}\epsilon$, $\sigma_1 = \sigma_0 + \frac{1}{\log \y}$ and $\y = \lb \log q \rb ^\frac{20}{\epsilon}$ in Lemma \ref{DL: appr: sig}.  Coming the zero-density estimate of Montgomery (by taking  $T = Y + 2$),  we  obtain
\begin{align}\label{DL:approx:strip}
     - \frac{L^{\prime}}{L}(\sigma, \chi)  = \sum_{n \leqslant \y}
\frac{\Lambda(n)}{n^{\sigma}} \chi(n)  + O\lb    \lb\log q \rb^{-8} \rb,  \quad\forall \chi \in G_q^{*}\setminus \text{Bad}_{\sigma}(q),
\end{align}
where $\text{Bad}_{\sigma}(q)$ is a set of of ``bad" characters with cardinality satisfying
\begin{align}\label{cardniality: bad}
    \# \text{Bad}_{\sigma}(q) \ll q^{\frac{3(1-\sigma + \frac{1}{2}\epsilon)}{2-\sigma+\frac{1}{2}\epsilon}}  \lb\log q\rb^{O\lb1\rb}.
\end{align}

Let $X = B\log q\log_2 q$ and let $r(n)$ be the same function in Section \ref{section: strip: zeta}. Define $R(\chi) = \sum_{n \in \N} r(n)\chi(n) $, then again by the prime number theorem, we have
\begin{align}\label{trivialUpper: R(chi): strip}
|R(\chi)|^2 \leqslant q^{2\sigma B + o(1)}.\end{align} Define four sums as follows
\begin{align*}
 S_2(R, q) & = \sum_{\chi \in G_q }  \lb \text{Re} \sum_{n \leqslant \y}
\frac{\Lambda(n)}{n^{\sigma}} \chi(n)  \rb |R(\chi)|^2,\\
  S_1(R, q) & = \sum_{\chi \in G_q}  |R(\chi)|^2,\\
S_2^{*}(R, q) & = \sum_{\chi \in G_q^{*}\setminus \text{Bad}_{\sigma}(q) }  \lb \text{Re} \sum_{n \leqslant \y}
\frac{\Lambda(n)}{n^{\sigma}} \chi(n)  \rb |R(\chi)|^2,\\
  S_1^{*}(R, q) & = \sum_{\chi \in G_q^{*}\setminus \text{Bad}_{\sigma}(q)}  |R(\chi)|^2,
\end{align*}
then
\begin{align}\label{maxRe: S2S1Star:strip}
  \max_{\chi \in G_q^{*}\setminus \text{Bad}_{\sigma}(q)}  \text{Re} \sum_{n \leqslant \y}
\frac{\Lambda(n)}{n^{\sigma}} \chi(n) \geqslant \frac{S_2^{*}(R, q)}{S_1^{*}(R, q)}.
\end{align}

By \eqref{crude},  \eqref{cardniality: bad} and  \eqref{trivialUpper: R(chi): strip}, we have\begin{align*}
    S_2(R, q) &=  S_2^{*}(R, q) +  O\lb q^{2\sigma B + \frac{3(1-\sigma + \frac{1}{2}\epsilon)}{2-\sigma+\frac{1}{2}\epsilon}  + o(1)}\rb.
\end{align*}

Again by partial summation and the prime number theorem, we have 
\begin{align*}
  S_1(R, q) = \phi(q) \sum_{\substack{n, k\in \N\\n  \equiv k\ (\textrm{mod}\ q)\\ \gcd(k, q) =1 }}r(n)r(k)  \geqslant  (q-1)\sum_{n\in N} r^2(n)\geqslant q^{1+B\sigma\lb 1- c\lb\sigma\rb\rb + o(1)}.
\end{align*}

By the above estimates and $S_1^{*}(R, q) \leqslant  S_1(R, q)$,  we obtain
\begin{align}\label{reduce:to:S2S1:strip}
    \frac{S_2^{*}(R, q)}{S_1^{*}(R, q)} \geqslant \frac{S_2(R, q)}{S_1(R, q)} + O\lb q^{2\sigma B + \frac{3(1-\sigma + \frac{1}{2}\epsilon)}{2-\sigma+\frac{1}{2}\epsilon} -1-B\sigma\lb 1- c\lb\sigma\rb\rb + o(1)}\rb.
\end{align}

We require 
\begin{align}\label{condition:BLStrip}
    2\sigma B + \frac{3(1-\sigma + \frac{1}{2}\epsilon)}{2-\sigma+\frac{1}{2}\epsilon} -1 - B\sigma\lb 1- c\lb\sigma\rb\rb < 0,
\end{align}
so the error term on the right side of \eqref{reduce:to:S2S1:strip} is negligible.
And similar to \eqref{S2S1:ratio}, we have
\begin{align}\label{S2S1:ratio:strip}
  \frac{S_2(R, q)}{S_1(R, q)}  \geqslant \sum_{p \leqslant X}  \frac{\log p}{p ^{\sigma}} r(p) =  \lb \frac{\sigma}{1-\sigma}  + o\lb 1\rb \rb B^{1-\sigma} \lb \log q \rb^{1-\sigma} \lb \log_2 q \rb^{1-\sigma}.
\end{align}
Let $B$ be a positive constant satisfying \eqref{condition:BLStrip}, then the claim in the theorem follows from \eqref{DL:approx:strip}, \eqref{maxRe: S2S1Star:strip}, \eqref{reduce:to:S2S1:strip},  and \eqref{S2S1:ratio:strip}.

 \section{Proof of Theorem   \ref{Zeta=1: theta}}\label{ThmzetaTheta}
As in Section \ref{section: strip: zeta}, we use Lemma \ref{Zeta:approx:sig} and the zero-density estimate of Ingham (by set $\sigma_0 = 1 - \epsilon$, $\sigma_1 = \sigma_0 + \frac{1}{\log \y}$, and $\y = \lb \log T \rb ^\frac{20}{\epsilon}$) to find that\begin{align}\label{Eqzeta:approx:theta}
      e^{-\i \theta} \frac{\zeta^{\prime}}{\zeta}(1 + \i t)  = - e^{-\i \theta} \sum_{p \leqslant \y}
\frac{\log p}{p^{1 + \i t}}   + O\lb  1 \rb,  \quad\quad \forall |t| \in [T^{\beta}, T]\setminus \B(T),
\end{align}
where $\B(T)$ is a ``bad''  set with measure satisfying
\begin{align}\label{ThetaMeasB(T)}
    \text{meas}\lb \B(T)\rb  \ll T^{\frac{3 \epsilon}{1 +\epsilon}}(\log T)^{5+\frac{20}{\epsilon}}.\end{align}
    
Since $Y$ is a power of $\log T$, we find that 
\begin{align}\label{ZetaThetaUpperBoundPartialSum}
\left| - e^{-\i \theta} \sum_{p \leqslant \y}
\frac{\log p}{p^{1 + \i t}}\right| \leqslant  \sum_{p \leqslant \y}
\frac{\log p}{p} = \log Y + O(1) \ll \log_2 T.
\end{align}

Set $X = \frac{\log N}{\lb \log_2 N \rb^5}$. Let  $r(n)$ be a completely multiplicative function and define its value at primes by 
 $$
 r(p)  = \begin{cases}
	- e^{-\i \theta}\lb 1 - \frac{1}{(\log_2 N)^2} \rb , & \text{if\, $p \leqslant X$,}\\
            0, & \text{if\, $p > X$.}
		 \end{cases}
$$

Then we have the following asymptotic formula
\begin{align}\label{Ratior}
  \sum_{\substack{n\leqslant \frac{N}{X}}}\left|r(n)\right|^2 = \lb 1 + O\lb \frac{1}{\log N}\rb \rb\sum_{n \leqslant N}
 \left|r(n)\right|^2, \quad \forall N \geqslant 100,
\end{align} 
which follows from applications of  Rankin's trick. For instance,   we could directly deduce \eqref{Ratior}  from the following result of Hough. See \cite{Hough}*{page 100, page 107}.

\begin{lemma}[Adapted
from  \cite{Hough}*{Lemma 5.3}]\label{Hough}

Let $f_k (n)$ be a sequence of non-negative completely multiplicative functions and let $N_k \to \infty$ be  
a growing sequence of parameters. Define 
\[\alpha_k = \frac{(\log_2 N_k)^2}{\log N_k}\,.\]
Assume that $f_k(p)p^{\alpha_k} < 1$ for all primes $p$ and all sufficiently large $k$. And suppose that for  sufficiently large $k$, we have
\[\sum_p \log p \frac{f_k(p)}{1 - f_k(p)}< \log N_k - \frac{\log N_k}{\log_2 N_k}\]
and \[ \sum_p\sum_{n > \frac{\log N_k}{ (\log p) \left(\log_2 N_k\right)^4  }} \frac{\left(f_k(p)p^{\alpha_k}\right)^n}{n} = O(1)\,. \]
Then for sufficiently large  $k$, we have
\[ \sum_{ n \leqslant N_k}f_k(n) = \left(1+O\left(\frac{1}{\log N_k}\right)\right) \sum_{ n = 1}^{\infty}f_k(n)\,.  \]

\end{lemma}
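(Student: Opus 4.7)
The plan is to apply Rankin's trick to the tail $\sum_{n > N_k} f_k(n)$ and convert the resulting estimate into the desired asymptotic. Since $f_k \geqslant 0$, since $(n/N_k)^{\alpha_k} \geqslant 1$ for $n > N_k$, and since $f_k$ is completely multiplicative with $f_k(p) p^{\alpha_k} < 1$, one has
\begin{align*}
\sum_{n > N_k} f_k(n) \;\leqslant\; N_k^{-\alpha_k} \sum_{n=1}^{\infty} f_k(n) n^{\alpha_k} \;=\; N_k^{-\alpha_k} \prod_p \frac{1}{1 - f_k(p) p^{\alpha_k}}.
\end{align*}
Dividing by $\sum_n f_k(n) = \prod_p (1 - f_k(p))^{-1}$ and taking logarithms, the lemma reduces to showing
\begin{align*}
\mathcal{L}_k \;:=\; -\alpha_k \log N_k + \sum_p \sum_{n \geqslant 1} \frac{f_k(p)^n \bigl(p^{n\alpha_k} - 1\bigr)}{n} \;\leqslant\; -\log_2 N_k + O(1),
\end{align*}
since then $\sum_{n > N_k} f_k(n) \ll (\log N_k)^{-1} \sum_n f_k(n)$, and subtracting from $\sum_n f_k(n)$ yields the claim.

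Next I would split the inner sum at the threshold $n_0(p) = \log N_k / ((\log p)(\log_2 N_k)^4)$, which is exactly the cutoff appearing in the second hypothesis. For $n > n_0(p)$, the trivial bound $p^{n\alpha_k} - 1 \leqslant p^{n\alpha_k}$ reduces the contribution to $\sum_p \sum_{n > n_0(p)} (f_k(p) p^{\alpha_k})^n / n$, which is $O(1)$ by that hypothesis. For $n \leqslant n_0(p)$, a direct substitution using $\alpha_k = (\log_2 N_k)^2/\log N_k$ shows that $n \alpha_k \log p \leqslant (\log_2 N_k)^{-2}$, so Taylor expansion gives $p^{n\alpha_k} - 1 = n\alpha_k \log p + O\bigl((n\alpha_k \log p)^2\bigr)$ for all sufficiently large $k$.

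The linear contribution, summed over $p$ and $n \leqslant n_0(p)$, is bounded by
\begin{align*}
\alpha_k \sum_p \log p \cdot \frac{f_k(p)}{1 - f_k(p)} \;<\; \alpha_k \log N_k - \frac{\alpha_k \log N_k}{\log_2 N_k} \;=\; \alpha_k \log N_k - \log_2 N_k,
\end{align*}
by the first hypothesis together with the defining identity $\alpha_k = (\log_2 N_k)^2/\log N_k$. This cancels $-\alpha_k \log N_k$ and produces the target $-\log_2 N_k$.

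The main technical point is controlling the quadratic remainder, and I would handle it by the observation that $(n\alpha_k \log p)^2 \leqslant (n\alpha_k \log p) \cdot (\log_2 N_k)^{-2}$ holds throughout the range $n \leqslant n_0(p)$. This reduces the quadratic error to the linear-term estimate with an extra factor $(\log_2 N_k)^{-2}$, giving a total contribution at most
\begin{align*}
\frac{\alpha_k}{(\log_2 N_k)^2} \sum_p \log p \cdot \frac{f_k(p)}{1 - f_k(p)} \;<\; \frac{\alpha_k \log N_k}{(\log_2 N_k)^2} \;=\; 1.
\end{align*}
Assembling the main term with this absorbed quadratic error and the $O(1)$ tail gives $\mathcal{L}_k \leqslant -\log_2 N_k + O(1)$, as required. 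I do not anticipate any deep obstacle here: the argument is a carefully calibrated Rankin trick whose two hypotheses are tailored exactly to the two ranges of $n$, and the delicate point is simply verifying that the choice $\alpha_k = (\log_2 N_k)^2/\log N_k$ is just strong enough for the first hypothesis to absorb the quadratic error without eating into the main saving $-\log_2 N_k$.
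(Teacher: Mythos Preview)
Your argument is correct. The paper does not supply its own proof of this lemma; it is quoted (with adaptation) from Hough \cite{Hough}*{Lemma 5.3}, and the surrounding text simply notes that \eqref{Ratior} ``follows from applications of Rankin's trick.'' Your write-up is exactly such an application, and it is essentially the standard proof: Rankin's bound for the tail, the Euler-product comparison, and the two hypotheses handling respectively the linear term in the Taylor expansion of $p^{n\alpha_k}-1$ (small $n$) and the large-$n$ tail. The computation $n\alpha_k\log p\leqslant (\log_2 N_k)^{-2}$ for $n\leqslant n_0(p)$ is the key numerical check, and you have it right; the quadratic remainder then collapses to $O(1)$ via the same first hypothesis, as you say.

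One small point worth making explicit for completeness: you use $\sum_n f_k(n)=\prod_p(1-f_k(p))^{-1}<\infty$. In the paper's application $f_k(p)$ vanishes for $p>X$, so this is automatic; in the abstract statement it is implicit in the finiteness of the right-hand side of the conclusion, but strictly speaking one might note that the Rankin bound already shows $\sum_n f_k(n)n^{\alpha_k}<\infty$, hence $\sum_n f_k(n)<\infty$ a fortiori. This is cosmetic and does not affect the argument.
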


Let  $N = \lfloor T^{\kappa} \rfloor$ (here $\kappa$ is a fixed positive number smaller than $1$, to be chosen later) and define $R(t) = \sum_{n \leqslant N} r(n) n^{-\i t} $,  then by Cauchy's inequality,  we have the following trivial bound
\begin{align}\label{trivialUpper: R(t): theta}
|R(t)|^2 \leqslant N \sum_{n \leqslant N}|r(n)|^2 \leqslant N^2 \leqslant T^{2\kappa}.\end{align}

Define $M_2(R, T)$, $M_1(R, T)$, $I_2(R, T)$ and $I_1(R, T)$ as follows
\begin{align*}
    M_2(R, T) &=\int_{T^{\beta} \leqslant |t| \leqslant  T} \text{Re}\lb  - e^{-\i \theta} \sum_{p \leqslant \y}
\frac{\log p}{p^{1 + \i t}} \rb |R(t)|^2 \Phi(\frac{t\log T}{T}) \d t,\\
    M_1(R, T) & = \int_{T^{\beta} \leqslant |t| \leqslant  T}  |R(t)|^2 \Phi(\frac{t\log T}{T}) \d t,\\
     I_2(R, T) & =\int_{-\infty}^{+\infty} \text{Re}\lb  - e^{-\i \theta} \sum_{p \leqslant \y}
\frac{\log p}{p^{1 + \i t}} \rb |R(t)|^2 \Phi(\frac{t\log T}{T}) \d t,\\
     I_1(R, T) & = \int_{-\infty}^{+\infty}  |R(t)|^2 \Phi(\frac{t\log T}{T}) \d t,
  \end{align*}
  then \begin{align}\label{ratioM2M1Theta}
      \max_{T^{\beta} \leqslant |t| \leqslant  T}\text{Re}\lb   - e^{-\i \theta} \sum_{p \leqslant \y}
\frac{\log p}{p^{1 + \i t}}\rb \geqslant \frac{M_2(R, T)}{M_1(R, T)}.
  \end{align}

To compute $ I_2(R, T)$, we will split it into two sums. For the diagonal terms,  namely, $pn = m$, we make use of the facts that $r(n)$ is completely multiplicative and $ \arg (-\overline{r(p)}) = \theta $. Then we obtain
\begin{align*}
  I_2(R, T) &=  \text{Re}\lb - e^{-\i \theta} \sum_{p \leqslant Y} \frac{\log p}{p }\int_{-\infty}^{+\infty} \sum_{n,\, m \leqslant  N} r(n) \overline{r(m)}  \lb\frac{m}{pn}\rb^{\i t}\Phi\lb\frac{t\log T}{T}\rb \d t \rb\\
  &= \sqrt{2\pi} \frac{T}{\log T} \text{Re}\lb - e^{-\i \theta} \sum_{p \leqslant Y} \frac{\log p}{p } \sum_{n,\, m \leqslant N} r(n)\overline{r(m)}  \Phi\lb \frac{T}{\log  T} \log \frac{m}{pn} \rb  \rb\\
   &= \sqrt{2\pi} \frac{T}{\log T}\sum_{p \leqslant X} \frac{\log p}{p} \left|r(p)\right| \sum_{\substack{n\leqslant \frac{N}{p}}}\left|r(n)\right|^2  + \Er,
\end{align*}
where $\Er$ is defined by
\begin{align*}
    \Er:\, = \sqrt{2\pi} \frac{T}{\log T} \text{Re}\lb - e^{-\i \theta} \sum_{p \leqslant Y} \frac{\log p}{p } \sum_{ \substack{n,\, m \leqslant N\\ pn \neq m}} r(n)\overline{r(m)}  \Phi\lb \frac{T}{\log  T} \log \frac{m}{pn} \rb  \rb.
\end{align*}

Since $p \leqslant Y$, and $m, n \leqslant N$, if $pn \neq m$, then $\left| \log \frac{m}{pn} \right|\geqslant (YN)^{-1}$.   Since $Y$ is a power of $\log T$ and $N  \leqslant T^{\kappa} $, we find that
\[ \Phi\lb \frac{T}{\log  T} \log \frac{m}{pn} \rb \leqslant e^{- T^A}, \quad \forall n, m \leqslant N, p\leqslant Y, \,\text{and}\,\, pn \neq m. \]

Note that $|r(n)| \leqslant 1$. Then using the above estimate, we obtain
\[ |\Er| \ll e^{- T^A}.\]

So we obtain 
\begin{align*}
    I_2(R, T) = \sqrt{2\pi} \frac{T}{\log T}\sum_{p \leqslant X} \frac{\log p}{p} \left|r(p)\right| \sum_{\substack{n\leqslant \frac{N}{p}}}\left|r(n)\right|^2  +   O\lb e^{- T^A}\rb.
\end{align*}

And similarly, we have
\begin{align*}
    I_1(R, T) = \sqrt{2\pi} \frac{T}{\log T} \sum_{n \leqslant N}|r(n)|^2  + O\lb e^{- T^A}\rb.
\end{align*}

By the above two formulas and  \eqref{Ratior}, we deduce that
\begin{align}\label{I2I1:theta}
    \frac{I_2(R, T)}{I_1(R, T)} \geqslant  \lb 1 + O\lb \frac{1}{\log T}\rb \rb \sum_{p \leqslant X} \frac{\log p}{p} \left|r(p)\right| + O\lb 1\rb = \log_2 T + O\lb\log_3 T \rb. 
\end{align}

And since $r(1) = 1$, we have $I_1(R, T) \gg \frac{T}{\log T}$. By the estimates \eqref{ZetaThetaUpperBoundPartialSum} and \eqref{trivialUpper: R(t): theta}, we deduce that $M_2(R, T) = I_2(R, T) + O\lb T^{2\kappa + \beta + o(1)}\rb$ and $M_1(R, T) = I_1(R, T) + O\lb T^{2\kappa + \beta + o(1)}\rb$.
Combining with the trivial bound $M_1(R, T) \leqslant I_1(R, T)$,   we obtain
\begin{align}\label{M2M1I2I1:theta}
    \frac{M_2(R, T)}{M_1(R, T)} \geqslant   \frac{I_2(R, T)}{I_1(R, T)}  + O\lb T^{ 2\kappa + \beta  - 1 + o(1)}\rb.
\end{align}

Let $\kappa$   be a fixed positive number such that $2\kappa + \beta  - 1   < 0$. By \eqref{ThetaMeasB(T)}, for $M_2(R, T)$, the integration over the set $\B(T)$ is at most $\ll T^{2\kappa +\frac{3\epsilon}{1 + \epsilon} +o(1)} $. This contribution is negligible, if we let $\epsilon$ be a  fixed positive number satisfying the following 
\begin{align*}
  2\kappa +\frac{3\epsilon}{1 + \epsilon}  < 1.
\end{align*}
Then the claim in the theorem follows from \eqref{Eqzeta:approx:theta}, \eqref{ratioM2M1Theta}, \eqref{I2I1:theta}, and \eqref{M2M1I2I1:theta}.

 \section{Proof of Theorem   \ref{Lsig=1: theta}}\label{ThmLTheta}

As in Section \ref{SectionProofLSrtip}, we use Lemma \ref{DL: appr: sig} and the zero-density estimate of Montgomery (by set $\sigma_0 = 1 - \epsilon$, $\sigma_1 = \sigma_0 + \frac{1}{\log \y}$, and $\y = \lb \log q \rb ^\frac{20}{\epsilon}$) to find that\begin{align}\label{DL:approx:theta}
      e^{-\i \theta} \frac{L^{\prime}}{L}(1, \chi)  = - e^{-\i \theta} \sum_{p \leqslant \y}
\frac{\log p}{p} \chi(p)  + O\lb  1 \rb,  \quad\forall \chi \in G_q^{*}\setminus \text{Bad}(q),
\end{align}
where $\text{Bad}(q)$ is a set of of ``bad" characters with cardinality satisfying
\begin{align}\label{CardnialityOneBad}
    \# \text{Bad}(q) \ll q^{\frac{3\epsilon}{1+\epsilon}}  \lb\log q\rb^{O\lb1\rb}.
\end{align}

Note that 
\begin{align}\label{DLThetaUpperBoundPartialSum}
\left| - e^{-\i \theta} \sum_{p \leqslant \y}
\frac{\log p}{p} \chi(p)\right| \leqslant  \sum_{p \leqslant \y}
\frac{\log p}{p} \ll \log Y \ll \log_2 q.
\end{align}

Let $r(n)$ be the same function in Section \ref{ThmzetaTheta}, with $N = \lfloor q^{\frac{1}{24}} \rfloor$. Define $R(\chi) = \sum_{n \leqslant N} r(n)\chi(n) $. Then by Cauchy's inequality,  we have the trivial bound
\begin{align}\label{trivialUpper: R(chi): theta}
|R(\chi)|^2 \leqslant N \sum_{n \leqslant N}|r(n)|^2 \leqslant N^2 \leqslant q^{\frac{1}{12}}.\end{align} 

Define four sums as follows
\begin{align*}
 S_2(R, q) & = \sum_{\chi \in G_q }  \text{Re}\lb  - e^{-\i \theta} \sum_{p \leqslant \y}
\frac{\log p}{p} \chi(p)  \rb |R(\chi)|^2,\\
  S_1(R, q) & = \sum_{\chi \in G_q}  |R(\chi)|^2,\\
S_2^{*}(R, q) & = \sum_{\chi \in G_q^{*}\setminus \text{Bad}(q) } \text{Re}\lb   - e^{-\i \theta} \sum_{p \leqslant \y}
\frac{\log p}{p} \chi(p) \rb |R(\chi)|^2,\\
  S_1^{*}(R, q) & = \sum_{\chi \in G_q^{*}\setminus \text{Bad}(q)}  |R(\chi)|^2,
\end{align*}
then
\begin{align}\label{maxRe: S2S1Star:theta}
  \max_{\chi \in G_q^{*}\setminus \text{Bad}(q)}  \text{Re}\lb   - e^{-\i \theta} \sum_{p \leqslant \y}
\frac{\log p}{p} \chi(p) \rb  \geqslant \frac{S_2^{*}(R, q)}{S_1^{*}(R, q)}.
\end{align}

By \eqref{CardnialityOneBad}, \eqref{DLThetaUpperBoundPartialSum},   and  \eqref{trivialUpper: R(chi): theta}, we have\begin{align*}
    S_2(R, q) &=  S_2^{*}(R, q) +  O\lb q^{\frac{1}{12} + \frac{3\epsilon}{1+\epsilon} + o(1)}\rb.
\end{align*}

Since $N \leqslant  q^{\frac{1}{24}}$,  by  the orthogonality of Dirichlet
characters, we have
\begin{align*}
    S_1(R, q) = \sum_{n, m \leqslant N}  r( n) \overline{r(m)}\sum_{\chi \in G_q} \chi(n) \overline{\chi(m)} = \phi(q)\sum_{n \leqslant N}
 \left|r(n)\right|^2 \geqslant q - 1.
\end{align*}

By the above estimates and $S_1^{*}(R, q) \leqslant  S_1(R, q)$,  we obtain
\begin{align}\label{reduce:to:S2S1:theta}
    \frac{S_2^{*}(R, q)}{S_1^{*}(R, q)} \geqslant \frac{S_2(R, q)}{S_1(R, q)} + O\lb q^{\frac{1}{12} + \frac{3\epsilon}{1+\epsilon} - 1 +  o(1)}\rb.
\end{align}

By  the orthogonality of Dirichlet
characters, we obtain
\begin{align*}
    S_2(R, q) &= \text{Re}\lb - e^{-\i \theta} \sum_{p \leqslant \y}\frac{\log p}{p} \sum_{n, m \leqslant N}  r( n) \overline{r(m)}\sum_{\chi \in G_q} \chi(p n) \overline{\chi(m)}\rb\\
  &=  \text{Re}\lb - e^{-\i \theta} \sum_{p \leqslant Y} \frac{\log p}{p} \phi(q) \sum_{\substack{n, m \leqslant N\\pn  \equiv m\ (\textrm{mod}\ q)\\ \gcd(m,\, q) =1 }}r(n)\overline{r(m)} \rb\\
  &= \text{Re}\lb - e^{-\i \theta} \phi(q) \sum_{p \leqslant Y} \frac{\log p}{p} \overline{r(p)} \sum_{\substack{n\leqslant \frac{N}{p}}}\left|r(n)\right|^2 \rb,
\end{align*}
where the last step follows from the fact that if $p \leqslant Y$, $n, m\leqslant N$,   then $pn\equiv m\ (\textrm{mod}\ q)$ with $\gcd(m, q) =1$ is equivalent to $pn = m$. By the definition of $r(n)$ at primes, we have 
\begin{align*}
    S_2(R, q) =\phi(q) \sum_{p \leqslant X} \frac{\log p}{p} \left|r(p)\right| \sum_{\substack{n\leqslant \frac{N}{p}}}\left|r(n)\right|^2.
\end{align*}

Then by \eqref{Ratior},  we obtain 
\begin{align*}
    \frac{S_2(R, q)}{S_1(R, q)} \geqslant  \lb 1 + O\lb \frac{1}{\log q}\rb \rb \sum_{p \leqslant X} \frac{\log p}{p} \left|r(p)\right|  = \log_2 q  + O\lb\log_3 q \rb. 
\end{align*}

Let $\epsilon$ be a sufficiently small fixed positive number, then the claim in the theorem follows from \eqref{DL:approx:theta}, \eqref{maxRe: S2S1Star:theta}, and \eqref{reduce:to:S2S1:theta}.

\section{Further Discussions,  Conjectures and Problems}\label{sec:res}
First, we would like to discuss the relationship between the maximal values of the logarithmic derivatives of the Dirichlet  $L$-functions and the maximal values of the Dirichlet  $L$-functions and their derivatives.  In \cite{DY23}*{Theorem 2}, we established that\begin{align*}
   \max_{ \substack{  \chi \neq \chi_0 \\ \chi(\text{mod}\, q)}} \left|L^{\prime}(1, \chi) \right| \geqslant \left(e^{\gamma}+o\left(1\right)\right) \left(\log_2 q\right)^{2},\,\, \quad \text{as prime}\,\quad q \to \infty.
\end{align*} 

On the other hand, assuming the truth of a conjecture of Granville-Soundararajan (conjecture 1 in \cite{GS}) on character sums, one can (see \cite{DY23}*{Theorem 6}) deduce that  $$ \left|L(1, \chi) \right|\leqslant \left(e^{\gamma}+o\left(1\right)\right) \left(\log_2 q\right),\quad \text{as}\,\quad q \to \infty,$$   for all non-principal characters $\chi$ (mod $q$). By the conditional upper bound and the unconditional lower bound, one can immediately obtain
\begin{align*}
   \max_{ \substack{  \chi \neq \chi_0 \\ \chi(\text{mod}\, q)}} \left|\frac{L^{\prime}}{L}(1, \chi)\right| \geqslant \left(1 +o\left(1\right)\right) \left(\log_2 q\right),\,\, \quad \text{as prime}\,\quad q \to \infty,
\end{align*} 
which already reproduces  Lamzouri's result. Although this conditional result is weaker, it suggests a potential relationship between the maximum of the logarithmic derivative and the maximums  of the derivative and the original function. By looking at the Dirichlet polynomials approximations for $ L^{\prime}(1, \chi)/L(1, \chi)$,  $L(1, \chi)$ and
$L^{\prime} (1, \chi)$, we think it should be true that $|L(1, \chi)|$ is large if and only if $|L^{\prime}(1, \chi)|$  is large. It should also be true that if both $|L^{\prime} (1, \chi)|$  and $|L(1, \chi)|$ are large, then $|L^{\prime}(1, \chi)/L(1, \chi)|$  is large.   Moreover, asymptotically, when varying $\chi$,  $ L^{\prime}(1, \chi)/L(1, \chi)$ should be able to be large in any direction in the complex plane. But we do not think $L(1, \chi)$ or $L^{\prime} (1, \chi)$ could be large in any direction in the complex plane since in such cases products of primes are involved so one cannot make each summand point towards  any given direction without loss,  unlike the case of $ L^{\prime}(1, \chi)/L(1, \chi)$ which essentially only have isolated primes involved. This phenomenon should  hold for $\sigma \in [1/2, 1)$ as well.  And philosophically speaking, we believe the following principles should be true.
\begin{Principle}
   For all large prime $q$, and for any $\theta \in [0, 2\pi]$, there exists a  nonprincipal Dirichlet character $\chi$ (mod $q$) such that $\chi(p) \approx e^{\i\theta}, \quad \forall p \ll \log q \log_2 q.$
\end{Principle}

\begin{Principle}
   For all large $T$, and for any $\theta \in [0, 2\pi]$, there exists  $t\in [T, 2T]$ such that 
\begin{align*}
  p^{\i t} \approx e^{\i\theta}, \quad \forall p \ll \log T \log_2 T.  
\end{align*}
\end{Principle}

On the other hand, it is widely believed that $L(\frac{1}{2}, \chi)$ is not zero for all primitive characters $\chi$. The best result for the quadratic case is due to Soundararajan \cite{SoundNonVan}.  Combining the above discussion, we propose the following  conjecture.

\begin{conjecture}\label{CJDL1}
 As prime $q \to \infty$, we have \begin{align*}
\max_{ \substack{  \chi \neq \chi_0 \\ \chi(\emph{mod}\, q)}} \emph{Re} \lb e^{-\i \theta} \frac{L^{\prime}}{L}\lb\sigma, \chi\rb\rb
\sim \lb  \max_{ \substack{  \chi \neq \chi_0 \\ \chi(\emph{mod}\, q)}}  \left|L^{\prime}(\sigma, \chi) \right| \rb \Big/ \lb  \max_{ \substack{  \chi \neq \chi_0 \\ \chi(\emph{mod}\, q)}}  \left|L(\sigma, \chi) \right| \rb,
\end{align*}
uniformly for all $\theta \in [0, 2\pi]$ and all  $\sigma \in [\frac12, 1]$.
  \end{conjecture}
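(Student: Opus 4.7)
The plan is to prove the conjectured asymptotic by identifying a single ``resonant'' character $\chi^{*} = \chi^{*}(q,\theta,\sigma)$ that simultaneously essentially achieves the maximum of the left-hand side and drives $|L(\sigma,\chi^{*})|$ and $|L^{\prime}(\sigma,\chi^{*})|$ to within $1+o(1)$ of the maxima on the right-hand side. The identity $|L^{\prime}/L| \equiv |L^{\prime}|/|L|$ pointwise is then exploited: the ratio on the right is matched by $|L^{\prime}(\sigma,\chi^{*})|/|L(\sigma,\chi^{*})|$, and phase-alignment of $\chi^{*}(p)$ with $e^{\i\theta}$ at small primes turns this modulus into the twisted real part on the left.

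For the lower bound on the LHS, I would strengthen the resonance argument behind Theorems \ref{Lsig=1: theta} and \ref{L: strip} to produce, uniformly in $\theta \in [0,2\pi]$ and $\sigma \in [\tfrac12,1]$, a character $\chi^{*}$ with $\chi^{*}(p) \approx e^{\i\theta}$ for every prime $p \ll \log q \cdot \log_2 q$; this is the first stated Principle in Section \ref{sec:res}. Combined with the Dirichlet polynomial approximation of Lemma \ref{DL: appr: sig}, such a $\chi^{*}$ would deliver
\[ -\text{Re}\,\bigl(e^{-\i\theta} L^{\prime}/L(\sigma,\chi^{*})\bigr) \;\approx\; \sum_{p \leqslant X} \frac{\log p}{p^{\sigma}}, \qquad X = \log q \log_2 q, \]
and, by resonance optimality, no other character can exceed this to leading order.

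For the matching upper bound on the RHS, the key observation is that \emph{the same} $\chi^{*}$ nearly attains both $\max_\chi |L(\sigma,\chi)|$ and $\max_\chi |L^{\prime}(\sigma,\chi)|$, up to a phase that cancels in the ratio. Via truncated Euler products one has
\[ L(\sigma,\chi^{*}) \approx \prod_{p \leqslant Y}\bigl(1 - e^{\i\theta}/p^{\sigma}\bigr)^{-1}, \qquad L^{\prime}(\sigma,\chi^{*}) \approx L(\sigma,\chi^{*})\cdot\Bigl(-\sum_{p \leqslant Y}\frac{(\log p)\, e^{\i\theta}}{p^{\sigma}}\Bigr), \]
so the modulus ratio $|L^{\prime}(\sigma,\chi^{*})|/|L(\sigma,\chi^{*})|$ reproduces the LHS to leading order. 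Together with the unconditional lower bound from \cite{DY23}*{Theorem 2} and a Granville--Soundararajan style sharp upper bound for $\max_\chi |L(\sigma,\chi)|$, one would conclude that $|L(\sigma,\chi^{*})|$ and $|L^{\prime}(\sigma,\chi^{*})|$ differ from their respective maxima only by rotation factors in the Euler product that asymptotically cancel; combining both directions yields the conjectured $\sim$.

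The main obstacle is establishing the first Principle in the \emph{sharp pointwise} form required here: current resonance methods only produce phase-alignment in a $|R(\chi)|^{2}$-weighted averaged sense, which suffices for the one-sided $\Omega$-theorems of Sections \ref{ProofLsig=1}--\ref{ThmLTheta} but not for the precise matching of leading constants demanded by a $\sim$ asymptotic. A secondary obstacle is that sharp upper bounds for $\max_\chi |L(\sigma,\chi)|$ inside the critical strip are essentially unknown even conditionally for $\sigma < 1$, so that the clean ``rotation-cancellation'' step must itself be justified by new character-sum estimates. A complete proof therefore appears to require both a strengthened, effective version of the first Principle and sharp character-sum conjectures holding uniformly for $\sigma \in [\tfrac12,1]$, and even a conditional resolution seems to demand genuinely new input beyond the resonance machinery developed in this paper.
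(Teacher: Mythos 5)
The statement you were asked to prove is Conjecture \ref{CJDL1}: the paper does not prove it, and offers only the heuristic discussion in Section \ref{sec:res} as motivation. So there is no proof in the paper to compare against, and your proposal --- which is explicitly a plan plus a list of obstructions rather than a proof --- is in fact the appropriate response. Your sketch closely mirrors the paper's own motivation: the pointwise identity $|L'/L| = |L'|/|L|$ evaluated at a single resonant character, the phase-alignment Principle $\chi^{*}(p) \approx e^{\i\theta}$ for $p \ll \log q \log_2 q$, the truncated Euler product linking $L$, $L'$ and $L'/L$, and the conditional Granville--Soundararajan upper bound from \cite{GS} combined with the unconditional lower bound of \cite{DY23}. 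You also correctly identify the two genuine obstructions (pointwise versus weighted-average phase alignment, and the absence of sharp upper bounds for $\max_\chi|L(\sigma,\chi)|$ inside the strip).

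One further obstruction worth making explicit: your step ``by resonance optimality, no other character can exceed this to leading order'' is itself an open problem, independent of the Principles. Even at $\sigma = 1$ the best known upper bound for $|L'/L(1,\chi)|$, conditional on GRH, is $2\log_2 q + O(1)$ (Littlewood, Ihara--Murty--Shimura, Chirre--Val\r{a}s Hagen--Simoni\u{c}), which exceeds the conjectured leading term $\log_2 q$ by a factor of $2$; the paper stresses that this factor has never been removed. So the matching upper bound for the left-hand side of Conjecture \ref{CJDL1} is just as conjectural as the phase-alignment and character-sum inputs you list, and any honest account of the difficulty should include it.
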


And we have similar results for  $ |\zeta^{\prime}(1 + \i t)|$ in \cite{DY 23}. For early research on large values of  $ |\zeta^{\prime}(\sigma + \i t)|$,  see \cites{Ka, DY21, logGCD, DW}. We propose the following similar conjecture for  the Riemann zeta function. \begin{conjecture}\label{CJDL1Z}
Let $\epsilon \in (0, \frac12)$ be fixed. Then as $T \to \infty$, we have \begin{align*}
\max_{T \leqslant t \leqslant  2T}\emph{Re} \lb e^{-\i \theta}  \frac{\zeta^{\prime}}{\zeta}\lb \sigma + \i t\rb \rb \sim \frac{ \max_{T \leqslant t \leqslant  2T} \left| \zeta^{\prime}(\sigma + \i t)\right| }{\max_{T \leqslant t \leqslant  2T}\left|\zeta(\sigma + \i t)\right|},
\end{align*}
uniformly for all $\theta \in [0, 2\pi]$ and all  $\sigma \in [\frac12 + \epsilon, 1]$. 
  \end{conjecture}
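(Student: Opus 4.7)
The plan is to establish the two directions of the asymptotic equivalence in Conjecture \ref{CJDL1Z} separately. The lower bound direction ($\geqslant$) should follow from a resonance argument extending Section \ref{ThmzetaTheta} uniformly to $\sigma \in [\tfrac12+\epsilon, 1]$, while the upper bound ($\leqslant$) is considerably more delicate and is the main obstacle.

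For the lower bound, I would first apply Lemma \ref{Zeta:approx:sig} combined with Ingham's zero-density estimate to approximate each of $\zeta'/\zeta(\sigma+\i t)$, $\zeta(\sigma+\i t)$, and $\zeta'(\sigma+\i t)$ by short Dirichlet polynomials of length $Y=(\log T)^{A/\epsilon}$ valid on $[T, 2T]$ outside a bad set of measure $T^{3\epsilon/(1+\epsilon)+o(1)}$, uniformly in $\sigma$. I would then employ the short-resonator construction of Section \ref{ThmzetaTheta}, interpolating between the $\sigma=1$ weight used there and the $\sigma \in (\tfrac12, 1)$ weight used in Section \ref{section: strip: zeta}, by taking
\[
r(p) = \begin{cases} -e^{-\i\theta}\bigl(1 - (p/X)^{\sigma}\bigr), & p \leqslant X,\\ 0, & p > X, \end{cases}
\]
with $X = B\log T\log_2 T$ and $N = \lfloor T^{\kappa}\rfloor$ for a sufficiently small $\kappa > 0$. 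The phase factor $-e^{-\i\theta}$ rotates the main diagonal contribution onto the positive real axis, and the diagonal-term computation of Section \ref{ThmzetaTheta} then delivers
\[
\max_{T \leqslant t \leqslant 2T}\text{Re}\lb e^{-\i\theta}\frac{\zeta'}{\zeta}(\sigma+\i t)\rb \geqslant (1+o(1))\sum_{p \leqslant X}\frac{\log p}{p^{\sigma}}\bigl(1-(p/X)^{\sigma}\bigr).
\]
Repeating the analogous resonance computation for the Dirichlet polynomials approximating $\zeta(\sigma+\i t)$ and $\zeta'(\sigma+\i t)$ separately (with the resonator weight replaced by $|r(p)|$ so that all phases align with $+1$) should yield asymptotic lower bounds for $\max|\zeta(\sigma+\i t)|$ and $\max|\zeta'(\sigma+\i t)|$ whose ratio is asymptotic to the sum above, giving the direction $\geqslant$ of the conjecture.

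The main obstacle is the matching upper bound. The trivial pointwise estimate $|\zeta'/\zeta(\sigma+\i t)| \leqslant |\zeta'(\sigma+\i t)|/|\zeta(\sigma+\i t)|$ is useless since the denominator can be anomalously small at individual $t$. What is needed is a rigidity principle: at any $t$ nearly achieving the maximum on the left-hand side of the conjecture, the Dirichlet-polynomial approximation must force the phases $\{p^{-\i t}\}_{p \leqslant X}$ to concentrate near the single direction $-e^{-\i\theta}$, and this concentration must in turn make $|\zeta(\sigma+\i t)|$ and $|\zeta'(\sigma+\i t)|$ individually near-extremal. Quantifying this would require, already on the $1$-line, a sharpening of Littlewood's conditional bound $|\zeta'/\zeta(1+\i t)| \leqslant 2\log_2 t + O(1)$ to $\log_2 t + o(\log_2 t)$, together with an analogous improvement inside the critical strip. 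As noted in the introduction, Littlewood's constant $2$ has resisted improvement for nearly a century, so this appears to be the genuine mathematical difficulty and is the reason the statement must currently remain conjectural. A natural intermediate goal would be to prove the analogue of Principle 8.3 for zeta, namely that for each $\theta$ there exists $t \in [T, 2T]$ with $p^{\i t} \approx e^{\i\theta}$ uniformly for $p \ll \log T\log_2 T$; this alone would deliver the conjectured lower bound with the sharp constant while leaving the matching upper bound open.
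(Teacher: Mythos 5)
The statement you are asked about is Conjecture \ref{CJDL1Z}: the paper offers no proof of it, and none is expected to exist with current technology, so there is nothing in the paper to compare your argument against. Your proposal is, by its own admission, not a proof — you correctly identify the upper-bound direction as the fundamental obstruction — but it also overstates what the resonance method delivers in the ``$\geqslant$'' direction, and that is a genuine logical gap. To show
$\max_t \mathrm{Re}\lb e^{-\i\theta}\zeta'/\zeta\rb \geqslant (1-o(1))\,\max_t|\zeta'|/\max_t|\zeta|$
you need a \emph{lower} bound for the left side together with an \emph{upper} bound for the ratio on the right, and an upper bound for a ratio of maxima requires an upper bound for $\max_t|\zeta'(\sigma+\i t)|$ (and a lower bound for $\max_t|\zeta(\sigma+\i t)|$). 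Producing matching \emph{lower} bounds for both $\max|\zeta'|$ and $\max|\zeta|$ by resonance, as you propose, says nothing about their quotient: from $A\geqslant a$ and $B\geqslant b$ one cannot conclude anything about $A/B$. Unconditional (or even RH-conditional) upper bounds for $\max|\zeta'(\sigma+\i t)|$ and $\max|\zeta(\sigma+\i t)|$ of the conjectured order are wide open for $\sigma\in(\tfrac12,1)$ — the known conditional upper bounds are of shape $\exp\lb C(\log t)^{2-2\sigma}/\log_2 t\rb$, far above the resonance lower bounds — so neither direction of the asymptotic is within reach.

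Two further points. First, even the sharp-constant lower bound for $\mathrm{Re}\lb e^{-\i\theta}\zeta'/\zeta(\sigma+\i t)\rb$ in the strip is not established in the paper: the Remark following Theorem \ref{Lsig=1: theta} states explicitly that for fixed $\sigma\in(\tfrac12,1)$ Montgomery's method gives an $\Omega_+$ result uniform in $\theta$ only with an implied constant \emph{smaller} than the one in Theorem \ref{Zeta: strip}, so your interpolated resonator $r(p)=-e^{-\i\theta}(1-(p/X)^{\sigma})$ would need to be checked carefully (the diagonal extraction in Section \ref{ThmzetaTheta} relies on $|r(n)|\leqslant 1$ and the Rankin-trick lemma of Hough, whose hypotheses change when the weight depends on $\sigma$). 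Second, your diagnosis that the matter ultimately hinges on a rigidity/concentration principle for the phases $p^{-\i t}$, in the spirit of Principles 8.1 and 8.2 of the paper, is exactly the heuristic the author advances in Section \ref{sec:res}; that part of your discussion is sound as motivation, but it is heuristic, not proof.
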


Compared to Littlewood's conditional upper bounds, we think our lower bounds are more close to the truth and we have the following four conjectures. 

\begin{conjecture}\label{CJDL1Z1}
 As  $T \to \infty$, we have \begin{align*}
\max_{T \leqslant t \leqslant  2T}\emph{Re} \lb e^{-\i \theta }\frac{\zeta^{\prime}}{\zeta}\lb1 + \i t\rb \rb = \log_2 T + \log_3 T + O(1),
\end{align*}
 uniformly for all $\theta \in [0, 2\pi]$.
\end{conjecture}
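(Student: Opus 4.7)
The plan is to establish the conjectured asymptotic by separately proving a sharp lower bound and a sharp matching upper bound, both uniform in $\theta$. For the lower bound, the aim is to upgrade the error $O_\beta(\log_3 T)$ in Theorem~\ref{Zeta=1: theta} to $\log_3 T + O(1)$. I would replace the short Hough-type resonator of Section~\ref{ThmzetaTheta} by the long AMM-type resonator of Section~\ref{proofZetasig=1}, now taking $r(p) = -e^{-\i\theta}(1 - p/X)$ with $X = B\log T \log_2 T$ and $B$ just below $(1-\beta)/\log 4$. A short check shows that $-e^{-\i\theta}\overline{r(p)} = (1-p/X)$ is real and non-negative, so expanding $|R(t)|^2 = \sum_{n,m} r(n)\overline{r(m)}(m/n)^{\i t}$ and pairing with $-e^{-\i\theta}\sum_p (\log p)/p^{1+\i t}$ causes the diagonal contribution from $m = pn$ to collapse to
\begin{align*}
\sqrt{2\pi}\,\frac{T}{\log T}\sum_{p \leqslant X}\frac{\log p}{p}\lb 1 - \frac{p}{X}\rb \sum_{n \in \N}|r(n)|^2,
\end{align*}
after which the analysis of Section~\ref{proofZetasig=1} transfers verbatim. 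This should yield a lower bound of the form $\log_2 T + \log_3 T + C_1 - \epsilon$ with $C_1$ independent of $\theta$.

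For the upper bound, the strategy is: (i) use Lemma~\ref{Zeta:approx:sig} with $\sigma_0 = 1 - \delta$ and the Ingham zero-density estimate to approximate $\zeta'/\zeta(1 + \i t)$ by $-\sum_{n \leqslant Y}\Lambda(n)/n^{1+\i t}$ for $t \in [T, 2T]$ outside an exceptional set of measure $o(T)$; (ii) truncate further to primes $p \leqslant X = \log T \, (\log_2 T)^A$ via Mertens-type tail estimates; (iii) heuristically replace $(p^{\i t})_{p \leqslant X}$ by independent uniform random variables on the unit circle, and compute the moment generating function of $\text{Re}(e^{-\i\theta}\sum_{p \leqslant X}(\log p)/p \cdot X_p)$ to obtain a tail bound of the form $(\log T)^{-1} e^{-V + O(1)}$ for exceeding the value $\log_2 T + \log_3 T + V$; (iv) conclude via a union bound over a $1/\log T$-spaced net in $t \in [T, 2T]$ and a $1/\log T$-discretization of $\theta \in [0, 2\pi]$.

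The main obstacle is the upper bound, which appears to be of the same depth as the Granville--Soundararajan conjecture for $\max|\zeta(1 + \i t)|$. Step (iii) is only heuristic: making the random model rigorous requires controlling the joint distribution of $\{p^{\i t}\}_{p \leqslant X}$ for $t \in [T, 2T]$ with error terms that survive the large-deviation scale $V \sim \log_3 T$. This in turn requires understanding near-resonances of the form $\prod_p p^{\i t a_p} \approx 1$ with large $|a_p|$, which is beyond current techniques and is closely related to the GCD-sum obstructions in \cites{BS, BS2}. Conditionally on a quantitative form of Principle~2 of Section~\ref{sec:res}, both bounds would follow simultaneously; unconditionally, the upper bound is the crux and appears out of reach without new ideas.
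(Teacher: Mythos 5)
The statement you are trying to prove is Conjecture~\ref{CJDL1Z1}: the paper offers no proof of it, and explicitly presents the asymptotic $\log_2 T + \log_3 T + O(1)$ as conjectural. Your assessment of the upper bound is accurate --- it is the crux, it is at least as deep as the Granville--Soundararajan-type conjectures (even on RH, Littlewood only gives $2\log_2 T + O(1)$), and steps (iii)--(iv) of your sketch are a heuristic random model, not a proof. So the proposal cannot be counted as a proof of the statement; at best it reproduces the reasoning that leads the author to \emph{conjecture} it.

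There is also a concrete flaw in the lower-bound half. You claim that taking $r(p) = -e^{-\i\theta}(1-p/X)$ in the long resonator of Section~\ref{proofZetasig=1} lets ``the analysis transfer verbatim.'' It does not. The long-resonator method lives entirely on positivity: since $|R(t)|^2$ can be as large as $T^{2B+o(1)}$ with $2B>1$, the off-diagonal terms in $\sum_{k,m} r(k)\overline{r(m)}\,\widehat\Phi\lb\tfrac{T}{\log T}\log\tfrac{k}{m}\rb$ cannot be estimated; they are \emph{discarded} using that $r(k)r(m)\geqslant 0$ and $\widehat\Phi>0$, both in the lower bound for $I_1(R,T)$ and in the chain of inequalities leading to \eqref{ratioI2I1}. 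With complex $r(p)$ one has $r(k)\overline{r(m)} = |r(k)||r(m)|e^{-\i\theta(\Omega(k)-\Omega(m))}$, these terms have no fixed sign (already for $\theta=0$), and no trivial bound on the off-diagonal contribution is available at this resonator length. This is precisely why the paper switches to the short Soundararajan--Hough resonator (with $N=\lfloor T^{\kappa}\rfloor$, $\kappa<1/2$, so that off-diagonal terms can be bounded crudely by $e^{-T^{A}}$) for general $\theta$, and why Theorem~\ref{Zeta=1: theta} only achieves $\log_2 T + O_\beta(\log_3 T)$ rather than the sharp second term. Obtaining $\log_2 T + \log_3 T + O(1)$ uniformly in $\theta$ is open even as a lower bound; your construction does not close that gap.
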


\begin{conjecture}\label{CJDL2}
As prime $ q \to \infty$, we have \begin{align*}
\max_{ \substack{  \chi \neq \chi_0 \\ \chi(\emph{mod}\, q)}} \emph{Re} \lb e^{-\i \theta} \frac{L^{\prime}}{L}\lb 1, \chi\rb\rb = \log_2 q + \log_3 q + O(1),
\end{align*}
 uniformly for all $\theta \in [0, 2\pi]$.
\end{conjecture}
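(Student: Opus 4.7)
The plan is to mirror the strategy used for Theorem \ref{Zeta=1: theta} in Section \ref{ThmzetaTheta}, but replacing the Fourier-analytic long resonator over $t$ with an orthogonality-based resonator over Dirichlet characters, as in Section \ref{ProofLsig=1}. First I would apply Lemma \ref{DL: appr: sig} with $\sigma_0 = 1-\epsilon$, $\sigma_1 = \sigma_0 + 1/\log Y$ and $Y = (\log q)^{20/\epsilon}$, combined with Montgomery's zero-density estimate $\sum_{\chi \in G_q} N(\sigma_0, T, \chi) \ll (qT)^{3(1-\sigma_0)/(2-\sigma_0)} \log^{14}(qT)$ at $T = Y+2$. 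This yields the approximation
\[
e^{-\i\theta}\frac{L'}{L}(1,\chi) = -e^{-\i\theta}\sum_{p \leqslant Y}\frac{\log p}{p}\chi(p) + O(1)
\]
for all $\chi \in G_q^{*} \setminus \mathrm{Bad}(q)$, where $|\mathrm{Bad}(q)| \ll q^{3\epsilon/(1+\epsilon)} (\log q)^{O(1)}$.

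Next I would build a short resonator $R(\chi) = \sum_{n \leqslant N} r(n) \chi(n)$, with $N = \lfloor q^{1/24}\rfloor$ (chosen small enough that $N^2 < q^{1/12}$) and $r$ the completely multiplicative function defined at primes by $r(p) = -e^{-\i\theta}(1 - (\log_2 N)^{-2})$ for $p \leqslant X := \log N/(\log_2 N)^5$ and $r(p) = 0$ otherwise. The $-e^{-\i\theta}$ twist ensures $\arg(-e^{-\i\theta}\overline{r(p)}) = 0$, so that the target real part is driven by $|r(p)|$ at each prime. Define $S_2(R,q)$, $S_1(R,q)$ summing over all $\chi \in G_q$, and $S_2^{*}(R,q)$, $S_1^{*}(R,q)$ restricting to $G_q^{*}\setminus \mathrm{Bad}(q)$, so that $\max_{\chi} \mathrm{Re}(\cdots) \geqslant S_2^{*}/S_1^{*}$.

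The crux is to evaluate $S_1$ and $S_2$ by orthogonality. Since $N^2 < q$, the congruence $n \equiv m \pmod{q}$ with $n, m \leqslant N$ forces $n = m$, giving $S_1 = \phi(q) \sum_{n \leqslant N}|r(n)|^2 \geqslant q-1$. Similarly, for $S_2$ the sum $\sum_\chi \chi(pn)\overline{\chi(m)}$ with $p \leqslant Y$, $n, m \leqslant N$ and $pn \leqslant NY < q$ collapses to the diagonal $pn = m$, yielding
\[
S_2(R,q) = \phi(q)\sum_{p \leqslant X}\frac{\log p}{p}|r(p)|\sum_{n \leqslant N/p}|r(n)|^2.
\]
Then Hough's Lemma \ref{Hough} (the verification is the same one invoked for Theorem \ref{Zeta=1: theta}) gives $\sum_{n \leqslant N/p}|r(n)|^2 = (1+O(1/\log N))\sum_{n \leqslant N}|r(n)|^2$, so that $S_2/S_1 \geqslant (1+o(1))\sum_{p \leqslant X}\frac{\log p}{p}|r(p)| = \log_2 q + O(\log_3 q)$ by Mertens's theorem.

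Finally, I would control the loss from passing to starred sums. The trivial bound $|R(\chi)|^2 \leqslant N \sum_{n \leqslant N}|r(n)|^2 \leqslant q^{1/12}$ combined with $|\mathrm{Bad}(q)| \ll q^{3\epsilon/(1+\epsilon)+o(1)}$ and the fact that the Dirichlet polynomial in Step 1 is $\ll \log_2 q$ pointwise, shows $S_j(R,q) = S_j^{*}(R,q) + O(q^{1/12 + 3\epsilon/(1+\epsilon) + o(1)})$, which is negligible against $S_1 \gg q$ for $\epsilon$ fixed sufficiently small. The main obstacle is precisely this balancing act: $N$ must be small enough (well below $\sqrt{q}$) for orthogonality to select the exact diagonal and for the bad-character contribution to be absorbed, yet the resonator machinery must still deliver the full $\log_2 q$ lower bound — this is where the choice of $r(p)$ nearly equal to $-e^{-\i\theta}$ on all primes up to $X = \log N/(\log_2 N)^5$, together with Hough's Rankin-trick asymptotic, is essential.
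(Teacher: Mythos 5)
The statement you are trying to prove is Conjecture \ref{CJDL2}; the paper does not prove it, and your proposal does not either. What you have written is essentially a faithful reconstruction of the paper's proof of Theorem \ref{Lsig=1: theta}, which yields only the one-sided lower bound
\[
\max_{ \substack{  \chi \neq \chi_0 \\ \chi(\mathrm{mod}\, q)}} \mathrm{Re} \lb e^{-\i \theta} \frac{L^{\prime}}{L}(1, \chi) \rb \geqslant \log_2 q + O\lb \log_3 q\rb .
\]
The conjecture asserts an asymptotic \emph{equality} $= \log_2 q + \log_3 q + O(1)$, which requires two further inputs that your argument cannot supply. First, a matching upper bound $\leqslant \log_2 q + \log_3 q + O(1)$: nothing of this strength is known even conditionally --- under GRH the Littlewood/Ihara--Murty--Shimura bound gives only $2\log_2 q + O(1)$, and unconditionally one only has $O(\log q)$ from the classical zero-free region. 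Your resonator produces test characters at which the quantity is large; it says nothing about the maximum from above.

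Second, even on the lower-bound side your construction falls short of the conjectured secondary term. With the short resonator you must take $N \leqslant q^{1/24}$ so that orthogonality collapses to the exact diagonal, and then $X = \log N/(\log_2 N)^5$, so Mertens gives $\sum_{p \leqslant X} \frac{\log p}{p}|r(p)| = \log X + O(1) = \log_2 q - 5\log_3 q + O(1)$. That is $\log_2 q + O(\log_3 q)$, not $\log_2 q + \log_3 q + O(1)$: the $+\log_3 q$ term is achieved in the paper only via the long resonator with $X = B\log q \log_2 q$ (Theorem \ref{Lsig=1}), and that method is restricted to $\theta = 0$ because it needs $r(n) \geqslant 0$ to drop off-diagonal terms by positivity; for general $\theta$ the twisted coefficients $r(p) = -e^{-\i\theta}(1-(\log_2 N)^{-2})$ are not nonnegative and the long-resonator positivity argument breaks down. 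This is precisely the obstruction the paper acknowledges, which is why the uniform-in-$\theta$ statement is left as a conjecture.
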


\begin{conjecture}
Let $\sigma \in (\frac12, 1)$ be fixed. Then for all large $t$, we have \begin{align*}
\left|\frac{\zeta^{\prime}}{\zeta}\lb \sigma + \i t\rb \right|\ll_{\sigma} \lb\log t\rb^{1-\sigma}\lb\log_2 t\rb^{1-\sigma}.
\end{align*}
\end{conjecture}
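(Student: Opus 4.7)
The plan is to attack this conditionally on the Riemann Hypothesis; dropping RH appears hopeless because the very first step (a Dirichlet polynomial approximation) requires a wide zero-free region extending close to the line $\text{Re}(s) = \sigma$. Assuming RH, I would first invoke Lemma \ref{Zeta:approx:sig} with $\sigma_0 = \frac{1}{2}$, $\sigma_1 = \sigma_0 + 1/\log \y$, and $\y = (\log t)^{A(\sigma)}$ for a sufficiently large constant $A(\sigma)$, yielding
\[
-\frac{\zeta^{\prime}}{\zeta}(\sigma + \i t) = \sum_{p \leqslant \y} \frac{\log p}{p^{\sigma + \i t}} + O_{\sigma}(1),
\]
since the contribution of prime powers $p^{\nu}$ with $\nu \geqslant 2$ is absolutely convergent for $\sigma > \frac{1}{2}$. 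The problem thus reduces to a pointwise upper bound for the prime polynomial $P(\sigma + \i t) := \sum_{p \leqslant \y} (\log p)\, p^{-\sigma - \i t}$.

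The second step would be to compute high even moments of $P$. Expanding $|P|^{2k}$ and using standard mean-value estimates for Dirichlet polynomials, for positive integers $k$ with $\y^{2k} \leqslant T^{1-\epsilon}$ one obtains
\[
\frac{1}{T}\int_{T}^{2T} |P(\sigma + \i t)|^{2k}\, \d t = k!\, V(\sigma)^{k} \lb 1 + o(1)\rb, \qquad V(\sigma) := \sum_{p} \frac{(\log p)^{2}}{p^{2\sigma}} < \infty,
\]
the convergence of $V(\sigma)$ being the crucial feature of the half-plane $\sigma > \frac{1}{2}$. Chebyshev's inequality, combined with a union bound over a net of $t$-values of spacing $\asymp 1/\log \y$ on which $P$ varies by $O(1)$, then converts these moment bounds into pointwise control of $\max_{t \in [T, 2T]} |P(\sigma + \i t)|$.

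Third, to match the conjectured exponent I would follow the comparison approach of Soundararajan and Harper, replacing $p^{-\i t}$ with i.i.d.\ uniform random variables $X(p)$ on the unit circle and studying the random-model polynomial $P^{\mathrm{rand}}(\sigma) := \sum_{p \leqslant \y} (\log p) X(p) / p^{\sigma}$. Large-deviation estimates for such prime sums give, with overwhelming probability, an extremal value of order $(\log T)^{1-\sigma}(\log_2 T)^{1-\sigma}$ when sampling over $t \in [T, 2T]$, with a constant that should match the resonance-method lower bound of Theorem \ref{Zeta: strip}, thereby identifying the true order of magnitude.

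The main obstacle—and the reason this remains a conjecture—is that the diagonal moment asymptotic above is valid only for $k \ll (\log T)/\log \y$, which is too small to push the union bound all the way up to the extreme threshold $V \asymp (\log T)^{1-\sigma}(\log_2 T)^{1-\sigma}$; beyond that range the off-diagonal terms dominate and the comparison with the random model collapses. The best available conditional bound of Littlewood's type is still $|\zeta^{\prime}/\zeta(\sigma + \i t)| \ll_{\sigma} (\log t)^{2-2\sigma}$, off by a factor of $(\log t)^{1-\sigma}/(\log_2 t)^{1-\sigma}$, and closing this gap appears to require genuinely new input, as the analogous extremal problem for $\log \zeta(\sigma + \i t)$ on the same line is equally open.
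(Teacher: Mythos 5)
The statement you were asked to prove is labelled a \emph{conjecture} in the paper: the author offers no proof of it, and it is, as far as anyone knows, open. Your submission is accordingly not a proof but a research-program sketch, and to your credit you say so explicitly in the final paragraph. Given that, the right assessment is not ``correct versus incorrect'' but whether the sketch could plausibly be completed, and it cannot: the obstruction you name at the end is real and is precisely why the statement remains conjectural. Two points are worth making concrete. First, even granting RH, the reduction in your first step to the prime polynomial $P(\sigma+\i t)=\sum_{p\leqslant \y}(\log p)p^{-\sigma-\i t}$ is sound (it is essentially Littlewood's argument, or Lemma \ref{Zeta:approx:sig} with $\sigma_0=\tfrac12$), but everything after that fails quantitatively. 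The moment-plus-union-bound machinery with $k\ll \log T/\log_2 T$ (the largest $k$ for which the diagonal dominates, since one needs $\y^{2k}\leqslant T^{1-\epsilon}$) only yields $\max_{t\in[T,2T]}|P(\sigma+\i t)|\ll \log T/\sqrt{\log_2 T}$ or thereabouts, which for every $\sigma\in(\tfrac12,1)$ is \emph{weaker} than Littlewood's bound $(\log t)^{2-2\sigma}$, let alone the conjectured $(\log t)^{1-\sigma}(\log_2 t)^{1-\sigma}$. Second, your third step (comparison with the random model) supplies heuristic support for the conjectured order of the \emph{maximum} --- which is exactly how the paper motivates the conjecture, via the matching lower bound of Theorem \ref{Zeta: strip} --- but it is not a mechanism for proving an upper bound valid for every large $t$; large-deviation estimates for the random model say nothing about the rare $t$ at which $p^{\i t}$ conspires in a way the random model deems improbable. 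So the submission should be read as a correct explanation of why the conjecture is hard, not as progress toward proving it.
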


\begin{conjecture}
Let $\sigma \in (\frac12, 1)$ be fixed. Then for all large integer $q$ and all primitive characters $\chi (\emph{mod}\, q)$, we have \begin{align*}
\left|\frac{L^{\prime}}{L}\lb\sigma, \chi\rb \right|\ll_{\sigma} \lb\log q\rb^{1-\sigma}\lb\log_2 q\rb^{1-\sigma}.
\end{align*}
\end{conjecture}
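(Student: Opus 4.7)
The plan is to attack this conjectural upper bound by combining GRH with a sharp short–Dirichlet–polynomial approximation of $L'/L(\sigma,\chi)$. Note that the predicted bound $(\log q)^{1-\sigma}(\log_2 q)^{1-\sigma}$ is, up to a multiplicative constant, exactly the trivial upper bound for the Dirichlet polynomial
\[
\sum_{n \leqslant X} \frac{\Lambda(n)\chi(n)}{n^{\sigma}}, \qquad X \asymp \log q \, \log_2 q,
\]
since by the prime number theorem $\sum_{p \leqslant X} (\log p)/p^{\sigma} \sim X^{1-\sigma}/(1-\sigma)$. So the entire question is reduced to showing that $L'/L(\sigma,\chi)$ can be approximated by such a \emph{short} Dirichlet polynomial with an acceptable error. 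The lower bound established in Theorem \ref{L: strip} suggests that the relevant scale is $X = B \log q \log_2 q$ and that no cancellation can be expected beyond the prime number theorem asymptotic.

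The first concrete step would be to refine Lemma \ref{DL: appr: sig}: under GRH, one shifts the Perron contour all the way to $\mathrm{Re}(s) = \frac12 - \sigma + \epsilon$ and picks up an explicit sum over zeros $\rho = \tfrac12 + \i \gamma$ of $L(s,\chi)$. This yields a representation of the form
\[
-\frac{L'}{L}(\sigma,\chi) \;=\; \sum_{n\leqslant X}\frac{\Lambda(n)\chi(n)}{n^{\sigma}}\,w\!\lb\frac{n}{X}\rb \;-\; \sum_{\rho}\frac{X^{\rho-\sigma}}{\rho-\sigma} \;+\; O_\sigma(X^{1/2-\sigma}\log^{2} q),
\]
for a suitable smooth weight $w$. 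The trivial contribution of the Dirichlet polynomial is $\asymp X^{1-\sigma}/(1-\sigma)$, exactly of the right size.

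The delicate step is bounding $\sum_\rho X^{\rho-\sigma}/(\rho-\sigma)$. For $X = C\log q \log_2 q$, the factor $X^{\frac12 - \sigma}$ saves a substantial power, but the number of zeros with $|\gamma| \leqslant T$ is $\asymp T \log (qT)$, so one needs to couple this saving with strong horizontal control of low-lying zeros. A natural route is to assume Montgomery's pair–correlation–type hypothesis (or even only a weak fourth-moment bound for $L$ near the critical line), which would give $\sum_\rho |\rho-\sigma|^{-1} \ll (\log q)^{1+o(1)}$ and thereby reduce the zero sum to $(\log q)^{\frac32 - \sigma + o(1)}(\log_2 q)^{\frac12 - \sigma}$, comfortably smaller than the target when $\sigma > \frac12$.

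The main obstacle is precisely this step. Under GRH alone, the standard techniques only give the Littlewood-type truncation at $X = (\log q)^2$, which produces $(\log q)^{2-2\sigma}$, matching the existing conditional upper bounds but falling short of the conjecture by a factor of $(\log q / \log_2 q)^{1-\sigma}$. To close this gap one has to either (i) invoke a pair–correlation / density hypothesis to truly handle the sum over zeros at scale $\log q \log_2 q$, or (ii) find genuine cancellation in the intermediate-range sum $\sum_{\log q \log_2 q < n \leqslant (\log q)^{2}} \Lambda(n)\chi(n)/n^\sigma$ via a pretentious / Granville–Soundararajan-style analysis showing that its worst-case size over $\chi$ is majorised by the extremal Euler product contribution from primes $\leqslant \log q \log_2 q$. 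Either route would simultaneously yield the analogous conjecture for $\zeta'/\zeta$ on vertical lines, so a successful attack on one is likely to resolve both.
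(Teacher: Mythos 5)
You should first note that this statement is one of the open conjectures in the final section of the paper: the author states it without proof, as the conjectural upper-bound counterpart of the unconditional lower bound in Theorem \ref{L: strip}. So there is no proof in the paper to compare against, and your text is honestly framed as an attack plan rather than a proof. Your reduction is the natural one: the target bound is exactly the size of $\sum_{n\leqslant X}\Lambda(n)\chi(n)n^{-\sigma}$ with $X\asymp\log q\log_2 q$, and the entire difficulty is truncating the Dirichlet series at this very short length instead of at Littlewood's length $(\log q)^{2}$.

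There is, however, a genuine quantitative error in the one step you present as closing the argument. Even granting GRH and the estimate $\sum_\rho|\rho-\sigma|^{-1}\ll(\log q)^{1+o(1)}$ for the (suitably weighted or truncated) zero sum --- which, incidentally, follows from the Riemann--von Mangoldt-type zero count alone and needs no pair-correlation input, since the $\asymp\log q$ zeros with $|\gamma|\leqslant1$ each contribute $O_\sigma(1)$ --- the resulting bound is
\begin{align*}
\Bigl|\sum_\rho\frac{X^{\rho-\sigma}}{\rho-\sigma}\Bigr|\;\leqslant\;X^{\frac12-\sigma}\sum_\rho\frac{1}{|\rho-\sigma|}\;\ll\;(\log q)^{\frac32-\sigma+o(1)}(\log_2 q)^{\frac12-\sigma},
\end{align*}
and since $\frac32-\sigma>1-\sigma$ this is \emph{larger} than the target $(\log q)^{1-\sigma}(\log_2 q)^{1-\sigma}$ by a factor of roughly $(\log q)^{1/2}(\log_2 q)^{-1/2}$, not ``comfortably smaller'' as you assert. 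Moreover $\sum_{|\gamma|\leqslant1}|\rho-\sigma|^{-1}\gg_\sigma\log q$ is sharp, so no hypothesis on the vertical distribution of zeros can improve the absolute-value bound below $\log q$; to make this route work you would need genuine cancellation among the oscillating phases $X^{\i\gamma}$, uniformly in $\chi$, which is essentially as deep as the conjecture itself. Your alternative route (ii), extracting cancellation in $\sum_{X<n\leqslant(\log q)^2}\Lambda(n)\chi(n)n^{-\sigma}$ for every $\chi$, is the same obstruction in a different guise. In short, the proposal correctly locates where the difficulty lives, but the step you claim disposes of it is precisely the one that fails, and the statement remains open.
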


 And we have the following conjecture for the maximal value of  the logarithmic derivative at $\frac{1}{2}$.

\begin{conjecture}\label{CJDL3}
There exists constants $A>0$ and $D >\frac{1}{2}$ such that  as prime $q \to \infty$, we have \begin{align*}
\max_{ \substack{  \chi \neq \chi_0 \\ \chi(\emph{mod}\, q)}} \emph{Re} \lb e^{-\i \theta} \frac{L^{\prime}}{L}\lb\frac{1}{2}, \chi\rb\rb
\sim A \sqrt{\log q} \lb\log_2 q \rb^{D},
\end{align*}
uniformly for all $\theta \in [0, 2\pi]$.    
\end{conjecture}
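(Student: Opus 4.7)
The plan is to attack this asymptotic conditionally under GRH for all $L(s,\chi)$ with $\chi\,(\text{mod}\, q)$, together with the nonvanishing $L(\tfrac12,\chi)\neq 0$. The fundamental obstacle at $\sigma=\tfrac12$ is that the Dirichlet series $\sum \Lambda(n)\chi(n)/\sqrt{n}$ does not converge, so one cannot simply transplant the argument of Section \ref{ProofLsig=1}. Instead, I would begin from a Selberg-type explicit formula: for any $X$ with $\log X = o(\log q)$ and any smooth cutoff $w$,
\[
\frac{L'}{L}\!\left(\tfrac12,\chi\right) = -\sum_{n\leq X}\frac{\Lambda(n)\chi(n)}{\sqrt{n}}\,w(n/X) + \sum_{|\gamma_\chi|\leq 1/\log X}\frac{1}{\tfrac12-\rho_\chi} + O\!\left(\frac{\log q}{\log X}\right),
\]
which mimics the role of \eqref{aprox: L1} but must be supplemented by the explicit zero sum. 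A dyadic choice $X \asymp \log q\,(\log_2 q)^{2D}$ balances the size of the prime contribution against the expected fluctuation of the low-lying zeros on GRH.

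For the lower bound I would run the long-resonator scheme of Sections \ref{ProofLsig=1} and \ref{ThmLTheta} with a resonator $R(\chi)=\sum_{n\leq N} r(n)\chi(n)$ over smooth $n\leq N=q^{1/2-\epsilon}$, and with $r$ completely multiplicative defined on primes by
\[
r(p) = \frac{e^{-i\theta}}{\sqrt{p}}\,\kappa(p)\,\mathbf{1}_{p\leq X},
\]
where $\kappa$ is a slowly varying profile chosen (as in Bondarenko--Seip \cite{BS}) to maximize the Rayleigh quotient obtained after applying character orthogonality. In analogy with \eqref{S2S1:ratio}, the first moment of $\text{Re}(e^{-i\theta}L'/L(\tfrac12,\chi))$ against $|R(\chi)|^2$ reduces, up to admissible errors from the zero sum, to the weighted prime sum $\sum_{p\leq X}(\log p/p)\kappa(p)$. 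Optimizing $\kappa$ under the normalization $\sum_{p\leq X}\kappa(p)^2/p = O(1)$ produces a main term of order $A\sqrt{\log q}\,(\log_2 q)^{D}$ with explicit $A,D$, provided the extremal $\kappa$ can be identified cleanly.

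The matching upper bound, and the precise determination of $D>\tfrac12$, is the hard part. One would need a sharp Selberg-style moment estimate for $\text{Re}(e^{-i\theta}L'/L(\tfrac12,\chi))$ across $\chi\,(\text{mod}\,q)$, simultaneously controlling the prime contribution and the near-central zero contribution; this is more delicate than the corresponding arguments for $\log|L(\tfrac12,\chi)|$ because the $1/\sqrt{p}$ weighting produces much larger typical values. In particular, identifying the exponent $D$ exactly seems to require a Farmer--Gonek--Hughes-style probabilistic model for the extremal configuration of $\chi(p)$ on primes $p\leq X$, matched against the influence of the lowest-lying zero. I expect that, without new input on the joint distribution of low zeros and character values, one can at best hope to prove the $\gg$ direction up to constants, and leave the precise asymptotic $\sim A\sqrt{\log q}(\log_2 q)^D$ — including the very existence of such $D>\tfrac12$ — as genuinely conjectural.
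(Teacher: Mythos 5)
The statement you were asked about is not a theorem of the paper: it is Conjecture \ref{CJDL3}, stated in the closing section of open problems, and the paper offers no proof of it --- only heuristic motivation drawn from the unconditional lower bounds at $\sigma\in(\frac12,1)$ (Theorems \ref{Zeta: strip} and \ref{L: strip}), the analogy with Bondarenko--Seip at the critical point, and the expected nonvanishing of $L(\frac12,\chi)$. Your submission, to its credit, is not really a proof either: you lay out a plausible programme (a Selberg-type explicit formula at $s=\frac12$ plus a long resonator with $r(p)=e^{-\i\theta}\kappa(p)p^{-1/2}$ and a Bondarenko--Seip profile) and then explicitly concede that the asymptotic, and even the existence of the exponent $D>\frac12$, must be ``left as genuinely conjectural.'' So there is no proof on either side to compare; your honest conclusion coincides with the status the paper assigns to the statement.

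Two substantive remarks on your sketch, should you pursue the lower-bound direction. First, the genuinely new difficulty at $\sigma=\frac12$ that you gesture at but do not resolve is the zero sum: on GRH the term $\sum_{\rho_\chi}(\frac12-\rho_\chi)^{-1}=\sum_\gamma \i/\gamma$ is purely imaginary, so it is invisible for $\theta=0$ but dominates $\emph{Re}\lb e^{-\i\theta}L'/L(\frac12,\chi)\rb$ for $\theta$ near $\pm\frac{\pi}{2}$, being of size $1/\gamma_{\min}$; any argument claiming uniformity in $\theta\in[0,2\pi]$ must confront the distribution of the lowest zero, and this is not an ``admissible error'' as your second paragraph suggests. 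Second, your choice $N=q^{1/2-\epsilon}$ with coefficients $r(p)\asymp \kappa(p)/\sqrt p$ is the short-resonator regime of Soundararajan and Bondarenko--Seip, not the long-resonator regime of Sections \ref{ProofLsig=1} and \ref{ThmLTheta} that you cite; the orthogonality step $pn\equiv m\ (\mathrm{mod}\ q)\Leftrightarrow pn=m$ used in those sections requires $pN<q$, which is consistent with your $N$, but the Rayleigh-quotient computation and the optimization of $\kappa$ then follow \cite{BS} rather than \cite{AMM, AMMP}, and would at best yield a lower bound $\gg\sqrt{\log q}\,(\log_2 q)^{D}$ for some specific $D$ --- not the two-sided asymptotic $\sim$ that the conjecture asserts.
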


\begin{Remark}
    It may be true that $D = \frac{3}{2}$
in Conjecture \ref{CJDL3}.\end{Remark}

\begin{Remark}
If we restrict to  Dirichlet characters with fixed order, then variants of Conjecture \ref{CJDL1}, \ref{CJDL2}, \ref{CJDL3}  should remain true.   More precisely,  let $k\geqslant 2$ be a fixed integer, let $q$ be prime and let $\J^{(k)}_q$ be the set of Dirichlet characters (mod $q$) with order equal to $k$. We may consider maximal values of Re$\lb e^{-\i \theta}L^{\prime}(\sigma, \chi)/L(\sigma, \chi) \rb$, when  impose the condition $\chi \in \J^{(k)}_q$. If we replace the claim ``uniformly for all $\theta \in [0, 2\pi]$" by the claim ``uniformly for all $\theta \in \{0, \frac{2\pi}{k}, \cdots, \frac{2\pi}{k}\lb k-1\rb\}$'' and allow $q$ varying from $Q$ to $2Q$, then the modified  Conjecture \ref{CJDL1}, \ref{CJDL2}, \ref{CJDL3}  should also be true. 
\end{Remark}

\begin{Remark}
  In the statements of Conjecture \ref{CJDL1}, \ref{CJDL1Z}, \ref{CJDL1Z1}, \ref{CJDL2}, \ref{CJDL3},   if we replace the real part  ``Re$\lb e^{-\i \theta} \cdot \rb$"  of the object by the absolute value``$\left|\,\cdot\,\right|$", then the modified  conjectures should remain true. In particular, this implies that if $-\text{Re} (L^{\prime}(1, \chi)/L(1, \chi)) = \log_2 q + \log_3 q + O(1)$ for some $\chi$, then $\text{Im} (L^{\prime}(1, \chi)/L(1, \chi)) = O(\sqrt{\log_2 q})$. \end{Remark}

In the following, we propose some  problems for further study.
  \begin{problem}
Fix $\theta \in (0,2\pi)$.  Is it true that 
   \begin{align*}
&\max_{T \leqslant t \leqslant  2T}\emph{Re} \lb e^{-\i \theta }\zeta^{\prime}\lb 1 + \i t\rb\rb  \sim A_{\theta}\lb \log_2 T\rb^2, \quad as\quad T \to \infty,\\
&\max_{ \substack{  \chi \neq \chi_0 \\ \chi(\emph{mod}\, q)}} \emph{Re} \lb e^{-\i \theta} L^{\prime}\lb 1, \chi\rb\rb \sim A_{\theta} \lb \log_2 q\rb^2, \quad as\quad prime \quad q \to \infty,
\end{align*}
for some constant $A_{\theta}$?
And what  the correct value $A_{\theta}$  should be conjectured?
\end{problem}

\begin{problem}
Fix $\theta \in (0,2\pi)$.  Is it true that 
   \begin{align*}
&\max_{T \leqslant t \leqslant  2T}\emph{Re} \lb e^{-\i \theta }\zeta\lb 1 + \i t\rb\rb  \sim B_{\theta} \lb \log_2 T\rb, \quad as\quad T \to \infty,\\
&\max_{ \substack{  \chi \neq \chi_0 \\ \chi(\emph{mod}\, q)}} \emph{Re} \lb e^{-\i \theta} L\lb 1, \chi\rb\rb \sim B_{\theta} \lb \log_2 q\rb, \quad as\quad prime \quad q \to \infty,
\end{align*}
for some constant $B_{\theta}$?
And what  the correct value $B_{\theta}$  should be conjectured?
\end{problem}

\begin{problem}
Fix $\theta \in (0,2\pi)$ and $k>0$. Study   the size for the following extreme values 
   \begin{align*}
\max_{T \leqslant t \leqslant  2T}\emph{Re} \lb e^{-\i \theta }\zeta\lb \frac{1}{2} + \i t\rb\rb, \quad \quad \max_{ \substack{  \chi \neq \chi_0 \\ \chi(\emph{mod}\, q)}} \emph{Re} \lb e^{-\i \theta} L\lb \frac{1}{2}, \chi\rb\rb 
\end{align*}
and establish lower bounds for the following moments
\begin{align*}
\int_{T \leqslant t \leqslant  2T} \lb\emph{Re} \lb e^{-\i \theta }\zeta\lb \frac{1}{2} + \i t\rb\rb\rb^{k}\d t\,, \quad \quad \sum_{ \substack{  \chi \neq \chi_0 \\ \chi(\emph{mod}\, q)}} \lb\emph{Re} \lb e^{-\i \theta} L\lb \frac{1}{2}, \chi\rb\rb\rb^{k}. 
\end{align*}
\end{problem}

And one can consider above problems for higher order derivatives.

\section*{Acknowledgements}
 I thank Christoph Aistleitner  for his encouragement. I thank Kannan Soundararajan   for telling me the connection between  the Euler-Kronecker constants and the logarithmic derivatives of Dirichlet $L$-functions, and for drawing my attention to the work of Lamzouri. Half of this work was conducted in the spring of 2021 when I was   supported by the Austrian Science Fund (FWF), project W1230. I am currently supported  by a postdoctoral fellowship funded by  the Courtois Chair II in fundamental research;
 the Natural Sciences and Engineering Research Council of Canada; and
the Fonds de recherche du Qu\'ebec - Nature et technologies.

\begin{bibdiv}
\begin{biblist}

\bib{CA}{article}{
   author={Aistleitner, Christoph},
   title={Lower bounds for the maximum of the Riemann zeta function along
   vertical lines},
   journal={Math. Ann.},
   volume={365},
   date={2016},
   number={1-2},
   pages={473--496},
   issn={0025-5831},
   review={\MR{3498919}},
   doi={10.1007/s00208-015-1290-0},
}

\bib{AMM}{article}{
   author={Aistleitner, Christoph},
   author={Mahatab, Kamalakshya},
   author={Munsch, Marc},
   title={Extreme values of the Riemann zeta function on the 1-line},
   journal={Int. Math. Res. Not. IMRN},
   date={2019},
   number={22},
   pages={6924--6932},
   issn={1073-7928},
   review={\MR{4032179}},
   doi={10.1093/imrn/rnx331},
}

\bib{AMMP}{article}{
   author={Aistleitner, Christoph},
   author={Mahatab, Kamalakshya},
   author={Munsch, Marc},
   author={Peyrot, Alexandre},
   title={On large values of $L(\sigma,\chi)$},
   journal={Q. J. Math.},
   volume={70},
   date={2019},
   number={3},
   pages={831--848},
   issn={0033-5606},
   review={\MR{4009474}},
   doi={10.1093/qmath/hay067},
}

\bib{BL1913}{article}{
   author={Bohr, Harald},
   author={Landau, Edmund},
   title={Beitr\"{a}ge zur Theorie der Riemannschen Zetafunktion},
   language={German},
   journal={Math. Ann.},
   volume={74},
   date={1913},
   number={1},
   pages={3--30},
   issn={0025-5831},
   review={\MR{1511750}},
   doi={10.1007/BF01455343},
}

\bib{BS}{article}{
   author={Bondarenko, Andriy},
   author={Seip, Kristian},
   title={Large greatest common divisor sums and extreme values of the
   Riemann zeta function},
   journal={Duke Math. J.},
   volume={166},
   date={2017},
   number={9},
   pages={1685--1701},
   issn={0012-7094},
   review={\MR{3662441}},
   doi={10.1215/00127094-0000005X},
}

\bib{BS2}{article}{
   author={Bondarenko, Andriy},
   author={Seip, Kristian},
   title={Extreme values of the Riemann zeta function and its argument},
   journal={Math. Ann.},
   volume={372},
   date={2018},
   number={3-4},
   pages={999--1015},
   issn={0025-5831},
   review={\MR{3880290}},
   doi={10.1007/s00208-018-1663-2},
}

\bib{CG22}{article}{
   author={Chirre, Andr\'{e}s},
   author={Gon\c{c}alves, Felipe},
   title={Bounding the log-derivative of the zeta-function},
   journal={Math. Z.},
   volume={300},
   date={2022},
   number={1},
   pages={1041--1053},
   issn={0025-5874},
   review={\MR{4359552}},
   doi={10.1007/s00209-021-02820-9},
}

\bib{C23}{article}{
author = {Chirre, Andr\'{e}s}, 
author = {Hagen, Markus  Val\r{a}s}, 
author = {Simoni\u{c}, Aleksander},
title = {Conditional estimates for the logarithmic derivative of Dirichlet L-functions},
journal = {Indagationes Mathematicae},
year = {2023},
issn = {0019-3577},
doi = {https://doi.org/10.1016/j.indag.2023.07.005},
url = {https://www.sciencedirect.com/science/article/pii/S0019357723000691},

}

\bib{dT}{article}{
   author={de la Bret\`eche, R\'{e}gis},
   author={Tenenbaum, G\'{e}rald},
   title={Sommes de G\'{a}l et applications},
   language={French, with English summary},
   journal={Proc. Lond. Math. Soc. (3)},
   volume={119},
   date={2019},
   number={1},
   pages={104--134},
   issn={0024-6115},
   review={\MR{3957832}},
   doi={10.1112/plms.12224},
}

\bib{DThesis}{article}{
   author={Dong, Zikang},
  
   title={Distribution of values of the Riemann zeta function},
   journal={Université Paris-Est Créteil Val-de-Marne - Paris 12},
   year={2022},
   doi={https://theses.hal.science/tel-04284187},
}

\bib{DW}{article}{
   author={Dong, Zikang},
   author={Wei, Bin},
   title={A note on the large values of $ |\zeta^{(\ell)} (1+\i t)  |$},
   journal={Bull. Aust. Math. Soc.},
   volume={108},
   date={2023},
   number={2},
   pages={217--223},
   issn={0004-9727},
   review={\MR{4640903}},
   doi={10.1017/s0004972722001605},
}

\bib{EP}{article}{
   author={Ernvall-Hyt\"{o}nen, Anne-Maria},
   author={Paloj\"{a}rvi, Neea},
   title={Explicit bound for the number of primes in arithmetic progressions
   assuming the generalized Riemann hypothesis},
   journal={Math. Comp.},
   volume={91},
   date={2022},
   number={335},
   pages={1317--1365},
   issn={0025-5718},
   review={\MR{4405497}},
   doi={10.1090/mcom/3691},
}

\bib{GS}{article}{
   author={Granville, Andrew},
   author={Soundararajan, K.},
   title={Large character sums},
   journal={J. Amer. Math. Soc.},
   volume={14},
   date={2001},
   number={2},
   pages={365--397},
   issn={0894-0347},
   review={\MR{1815216}},
   doi={10.1090/S0894-0347-00-00357-X},
}

\bib{GSZeta}{article}{
   author={Granville, Andrew},
   author={Soundararajan, K.},
   title={Extreme values of $|\zeta(1+it)|$},
   conference={
      title={The Riemann zeta function and related themes: papers in honour
      of Professor K. Ramachandra},
   },
   book={
      series={Ramanujan Math. Soc. Lect. Notes Ser.},
      volume={2},
      publisher={Ramanujan Math. Soc., Mysore},
   },
   isbn={978-81-902545-2-6},
   date={2006},
   pages={65--80},
   review={\MR{2335187}},
}

\bib{Hough}{article}{
   author={Hough, Bob},
   title={The resonance method for large character sums},
   journal={Mathematika},
   volume={59},
   date={2013},
   number={1},
   pages={87--118},
   issn={0025-5793},
   review={\MR{3028173}},
   doi={10.1112/S0025579312001015},
}

\bib{Iha1}{article}{
   author={Ihara, Yasutaka},
   title={On the Euler-Kronecker constants of global fields and primes with
   small norms},
   conference={
      title={Algebraic geometry and number theory},
   },
   book={
      series={Progr. Math.},
      volume={253},
      publisher={Birkh\"{a}user Boston, Boston, MA},
   },
   isbn={978-0-8176-4471-0},
   isbn={0-8176-4471-7},
   date={2006},
   pages={407--451},
   review={\MR{2263195}},
   doi={10.1007/978-0-8176-4532-8\_5},
}

\bib{Iha2}{article}{
   author={Ihara, Yasutaka},
   title={The Euler-Kronecker invariants in various families of global
   fields},
   language={English, with English and French summaries},
   conference={
      title={Arithmetics, geometry, and coding theory (AGCT 2005)},
   },
   book={
      series={S\'{e}min. Congr.},
      volume={21},
      publisher={Soc. Math. France, Paris},
   },
   isbn={978-2-85629-279-2},
   date={2010},
   pages={79--102},
   review={\MR{2856562}},
}

\bib{IMS}{article}{
   author={Ihara, Yasutaka},
   author={Murty, V. Kumar},
   author={Shimura, Mahoro},
   title={On the logarithmic derivatives of Dirichlet $L$-functions at
   $s=1$},
   journal={Acta Arith.},
   volume={137},
   date={2009},
   number={3},
   pages={253--276},
   issn={0065-1036},
   review={\MR{2496464}},
   doi={10.4064/aa137-3-6},
}

\bib{Ingham}{article}{
   author={Ingham, A. E.},
   title={On the estimation of $N(\sigma,T)$},
   journal={Quart. J. Math. Oxford Ser.},
   volume={11},
   date={1940},
   pages={291--292},
   issn={0033-5606},
   review={\MR{0003649}},
}

\bib{Ka}{article}{
   author={Kalmynin, Alexander},
   title={Omega-theorems for the Riemann zeta function and its derivatives
   near the line ${\rm Re}\, s=1$},
   journal={Acta Arith.},
   volume={186},
   date={2018},
   number={3},
   pages={201--217},
   issn={0065-1036},
   review={\MR{3879389}},
   doi={10.4064/aa170922-29-3},
}

\bib{koukou}{book}{
 author={Koukoulopoulos, Dimitris},
   title={The distribution of prime numbers},
   series={Graduate Studies in Mathematics},
   volume={203},
   publisher={American Mathematical Society, Providence, RI},
   date={2019}
}

\bib{LYLA}{article}{
   author={Lamzouri, Youness},
   author={Languasco, Alessandro},
   title={Small values of $|L'/L(1,\chi)|$},
   journal={Exp. Math.},
   volume={32},
   date={2023},
   number={2},
   pages={362--377},
   issn={1058-6458},
   review={\MR{4592953}},
   doi={10.1080/10586458.2021.1927255},
}

\bib{Lamzouri}{article}{
   author={Lamzouri, Youness},
   title={The distribution of Euler-Kronecker constants of quadratic fields},
   journal={J. Math. Anal. Appl.},
   volume={432},
   date={2015},
   number={2},
   pages={632--653},
   issn={0022-247X},
   review={\MR{3378382}},
   doi={10.1016/j.jmaa.2015.06.065},
}

\bib{Lan1911}{article}{

author = {Landau, E.},
journal = {Vierteljahrsschrift der Naturforschenden Gesellschaft  in Zürich},
pages = {125-148},
volume={56},
title = {Zur Theorie der Riemannschen Zetafunktion},

year = {1911},
}

\bib{LA}{article}{
   author={Languasco, Alessandro},
   title={Efficient computation of the Euler-Kronecker constants of prime
   cyclotomic fields},
   journal={Res. Number Theory},
   volume={7},
   date={2021},
   number={1},
   pages={Paper No. 2, 22},
   issn={2522-0160},
   review={\MR{4194178}},
   doi={10.1007/s40993-020-00213-1},
}

\bib{LARL}{article}{
   author={Languasco, Alessandro},
   author={Righi, Luca},
   title={A fast algorithm to compute the Ramanujan-Deninger gamma function
   and some number-theoretic applications},
   journal={Math. Comp.},
   volume={90},
   date={2021},
   number={332},
   pages={2899--2921},
   issn={0025-5718},
   review={\MR{4305373}},
   doi={10.1090/mcom/3668},
}

\bib{Lit1925}{article}{
   author={Littlewood, J. E.},
   title={On the Riemann Zeta-Function},
   journal={Proc. London Math. Soc. (2)},
   volume={24},
   date={1925},
   number={3},
   pages={175--201},
   issn={0024-6115},
   review={\MR{1577156}},
   doi={10.1112/plms/s2-24.1.175},
}

\bib{MKM}{article}{
   author={Mourtada, M.},
   author={Kumar Murty, V.},
   title={Omega theorems for $\frac{L'}L(1,\chi_D)$},
   journal={Int. J. Number Theory},
   volume={9},
   date={2013},
   number={3},
   pages={561--581},
   issn={1793-0421},
   review={\MR{3043601}},
   doi={10.1142/S1793042112501485},
}

\bib{Mon71}{book}{
   author={Montgomery, Hugh L.},
   title={Topics in multiplicative number theory},
   series={Lecture Notes in Mathematics},
   volume={Vol. 227},
   publisher={Springer-Verlag, Berlin-New York},
   date={1971},
   pages={ix+178},
   review={\MR{0337847}},
}

\bib{MZeta}{article}{
   author={Montgomery, Hugh L.},
   title={Extreme values of the Riemann zeta function},
   journal={Comment. Math. Helv.},
   volume={52},
   date={1977},
   number={4},
   pages={511--518},
   issn={0010-2571},
   review={\MR{0460255}},
   doi={10.1007/BF02567383},
}

\bib{MV}{book}{
   author={Montgomery, Hugh L.},
   author={Vaughan, Robert C.},
   title={Multiplicative number theory. I. Classical theory},
   series={Cambridge Studies in Advanced Mathematics},
   volume={97},
   publisher={Cambridge University Press, Cambridge},
   date={2007},
   pages={xviii+552},
   isbn={978-0-521-84903-6},
   isbn={0-521-84903-9},
   review={\MR{2378655}},
}

\bib{Paul}{article}{
   author={Paul, Biplab},
   title={On values of logarithmic derivative of $L$-function attached to
   modular form},
   journal={Int. J. Number Theory},
   volume={19},
   date={2023},
   number={3},
   pages={531--552},
   issn={1793-0421},
   review={\MR{4547464}},
   doi={10.1142/S1793042123500252},
}

\bib{PS}{article}{
   author={Paloj\"{a}rvi, Neea},
   author={Simoni\v{c}, Aleksander},
   
  title={Conditional estimates for $L$-functions in the Selberg class},
   journal={arXiv:2211.01121},
  
   date = {2022},
   
   }

\bib{RS62}{article}{
   author={Rosser, J. Barkley},
   author={Schoenfeld, Lowell},
   title={Approximate formulas for some functions of prime numbers},
   journal={Illinois J. Math.},
   volume={6},
   date={1962},
   pages={64--94},
   issn={0019-2082},
   review={\MR{0137689}},
}

\bib{AS23}{article}{
   author={Simoni\v{c}, Aleksander},
   title={Estimates for $L$-functions in the critical strip under GRH with
   effective applications},
   journal={Mediterr. J. Math.},
   volume={20},
   date={2023},
   number={2},
   pages={Paper No. 87, 24},
   issn={1660-5446},
   review={\MR{4541569}},
   doi={10.1007/s00009-023-02289-2},
}

\bib{SoundNonVan}{article}{
   author={Soundararajan, K.},
   title={Nonvanishing of quadratic Dirichlet $L$-functions at $s=\frac12$},
   journal={Ann. of Math. (2)},
   volume={152},
   date={2000},
   number={2},
   pages={447--488},
   issn={0003-486X},
   review={\MR{1804529}},
   doi={10.2307/2661390},
}

\bib{SoundExtreme}{article}{
   author={Soundararajan, K.},
   title={Extreme values of zeta and $L$-functions},
   journal={Math. Ann.},
   volume={342},
   date={2008},
   number={2},
   pages={467--486},
   issn={0025-5831},
   review={\MR{2425151}},
   doi={10.1007/s00208-008-0243-2},
}

\bib{Sound}{article}{
   author={ Soundararajan, K.},
    title={The distribution of values of zeta and L-functions},
    date={2021},
   journal={arXiv:2112.03389},
   
  }

\bib{Tru}{article}{
   author={Trudgian, Tim},
   title={Explicit bounds on the logarithmic derivative and the reciprocal
   of the Riemann zeta-function},
   journal={Funct. Approx. Comment. Math.},
   volume={52},
   date={2015},
   number={2},
   pages={253--261},
   issn={0208-6573},
   review={\MR{3358319}},
   doi={10.7169/facm/2015.52.2.5},
}

\bib{Voronin}{article}{
   author={Voronin, S. M.},
   title={Lower bounds in Riemann zeta-function theory},
   language={Russian},
   journal={Izv. Akad. Nauk SSSR Ser. Mat.},
   volume={52},
   date={1988},
   number={4},
   pages={882--892, 896},
   issn={0373-2436},
   translation={
      journal={Math. USSR-Izv.},
      volume={33},
      date={1989},
      number={1},
      pages={209--220},
      issn={0025-5726},
   },
   review={\MR{0966989}},
   doi={10.1070/IM1989v033n01ABEH000823},
}

\bib{XY}{article}{
   author={Xiao, Xuanxuan},
   author={Yang, Qiyu},
   title={A note on large values of $L(\sigma,\chi )$},
   journal={Bull. Aust. Math. Soc.},
   volume={105},
   date={2022},
   number={3},
   pages={412--418},
   issn={0004-9727},
   review={\MR{4419585}},
   doi={10.1017/S0004972721000794},
}

\bib{DY21}{article}{
   author={Yang, Daodao},
   title={Extreme values of derivatives of the Riemann zeta function},
   journal={Mathematika},
   volume={68},
   date={2022},
   number={2},
   pages={486--510},
   issn={0025-5793},
   review={\MR{4418455}},
   doi={10.1112/mtk.12130},
}

\bib{logGCD}{article}{
   author={Yang, Daodao},
   
  title={A note on log-type GCD sums and derivatives of the Riemann zeta function},
   journal={arXiv:2201.12968},
  
   date = {2022},
   
   }

\bib{DY23}{article}{
   author={Yang, Daodao},
   
  title={Extreme values of derivatives of zeta and L-functions},
   journal={Bull. Lond. Math. Soc.},
   
   date={2023},
   
   }

\end{biblist}
\end{bibdiv}

\bigskip

\end{document}